\newcommand{%
    \def\svgwidth{\columnwidth}
    \import{./Figures/}{.pdf_tex}
}[2]{%
    \def\svgwidth{#1\columnwidth}
    \import{./Figures/}{#2.pdf_tex}
}
\definecolor{teosgreen}{RGB}{28,151,126}
\definecolor{teossalmon}{RGB}{190,80,80}
\newcommand{\GG}[1]{}
\newtheorem{theorem}{Theorem}[section]
\newtheorem{lemma}[theorem]{Lemma}
\newtheorem{proposition}[theorem]{Proposition}
\newtheorem{corollary}[theorem]{Corollary}
\newtheorem{conjecture}[theorem]{Conjecture}
\theoremstyle{definition}
	\newtheorem{definition}[theorem]{Definition}
	\newtheorem{remark}[theorem]{Remark}
	\newtheorem{example}[theorem]{Example}
\newcommand{\set}[1]{\left\{ #1 \right\}}
\newcommand{\abs}[1]{\left\lvert #1 \right\rvert}
\newcommand{\norm}[1]{\left\lVert#1\right\rVert}
\newcommand{\Z}{\mathbb{Z}}
\newcommand{\Q}{\mathbb{Q}}
\newcommand{\R}{\mathbb{R}}
\newcommand{\Sp}{\mathbb{S}}
\newcommand{\RP}{\mathbb{R}\mathrm{P}}
\newcommand{\D}{\mathbb{D}}
\DeclareMathOperator{\fillrad}{fillrad}
\DeclareMathOperator{\scal}{scal}
\DeclareMathOperator{\diam}{diam}
\DeclareMathOperator{\inj}{inj}
\DeclareMathOperator{\intt}{int}
\DeclareMathOperator{\II}{II}
\title[Complete 3-manifolds of positive scalar curvature with quadratic decay]{Complete 3-manifolds of positive scalar curvature with quadratic decay}
\author[F. Balacheff]{Florent Balacheff}
\address{Florent Balacheff, Universitat Aut\`onoma de Barcelona and Centre de Recerca Matem\`atica, 08193 Bellaterra, Spain}
\email{florent.balacheff@uab.cat}
\author[T. Gil Moreno de Mora Sard\`a]{Teo Gil Moreno de Mora Sard\`a}
\address{Teo Gil Moreno de Mora Sard\`a, Univ Paris Est Creteil, CNRS, LAMA, F-94010 Creteil, France;
Univ Gustave Eiffel, LAMA, F-77447 Marne-la-Vall\'ee, France; Departament de Matem\`atiques, Universitat Aut\`onoma de Barcelona, Barcelona, Spain}
\email{teo.gil-moreno-de-mora-i-sarda@u-pec.fr}
\author[S. Sabourau]{St\'ephane Sabourau}
\address{St\'ephane Sabourau\\ Univ Paris Est Creteil, CNRS, LAMA, F-94010 Creteil, France;
Univ Gustave Eiffel, LAMA, F-77447 Marne-la-Vall\'ee, France}
\email{stephane.sabourau@u-pec.fr}
\date{\today}
\subjclass[2020]{Primary 53C23; Secondary 53C21}
\keywords{Scalar curvature, quadratic decay, fill radius, $3$-manifold topology.}
\thanks{The first and second authors acknowledge support by the FEDER/AEI/MICINN grant PID2021-125625NB-I00 and the AGAUR grant 2021-SGR-01015. The first author acknowledges support by the AEI/MICINN Mar\'ia de Maeztu grant CEX2020-001084-M.  The second and third authors acknowledge support by the project Min-Max (ANR-19-CE40-0014).}
\begin{document}

\begin{abstract}
	We prove that if an orientable 3-manifold $M$ admits a complete Riemannian metric whose scalar curvature is positive and has a subquadratic decay at infinity, then it decomposes as a (possibly infinite) connected sum of spherical manifolds and $\Sp^2 \times \Sp^1$ summands. This generalises a theorem of Gromov and Wang by using a different, more topological, approach. As a result, the manifold $M$ carries a complete Riemannian metric of uniformly positive scalar curvature, which partially answers a conjecture of Gromov. More generally, the topological decomposition holds without any scalar curvature assumption under a weaker condition on the filling discs of closed curves in the universal cover based on the notion of fill radius. Moreover, the decay rate of the scalar curvature is optimal in this decomposition theorem. Indeed, the manifold $\R^2 \times \Sp ^1$ supports a complete metric of positive scalar curvature with exactly quadratic decay, but does not admit a decomposition as a connected sum.
\end{abstract}

\maketitle

\section{Introduction}

The scalar curvature $\scal : M \rightarrow \R$  of a Riemannian $n$-manifold is a central invariant in Riemannian geometry, defined as the average of the sectional curvatures up to a multiplicative constant. A main objective in Riemannian geometry is to extract topological or geometrical information from conditions on the scalar curvature of a manifold. In the study of three-dimensional manifolds, a fundamental question consists in understanding the topological structure of 3-manifolds that admit a Riemannian metric of positive scalar curvature. In his Problem Section \cite{Yau_1982}, Yau asked for a classification of such manifolds.

The closed case was addressed by Schoen--Yau \cite{Schoen_Yau_1979_a,Schoen_Yau_1979_b} using minimal surfaces and in parallel by Gromov--Lawson \cite{Gromov_Lawson_1980a, Gromov_Lawson_1980b,Gromov_Lawson_1983} using both minimal surfaces and the Dirac operator method, and finally concluded in the light of Perelman's work \cite{Perelman_2002,Perelman_2003_a,Perelman_2003_b}. They proved that a closed orientable 3-manifold which admits a Riemannian metric with positive scalar curvature decomposes as a connected sum of spherical manifolds and $\Sp^2 \times \Sp^1$ summands. Here, a \emph{spherical manifold} is a manifold $\Sp^3 / \Gamma$ obtained as the quotient of the 3-sphere by a subgroup $\Gamma < O(4)$ of isometries acting freely on $\Sp^3$. In both cases, their proof relies on the classical Kneser--Milnor prime decomposition theorem \cite{Kneser_1929,Milnor_1962} (and more generally on the resolution of the Poincar\'e conjecture).

The first problem one encounters when considering the non-compact case is that the Kneser--Milnor prime decomposition does not hold in general. Some counterexamples have been constructed \cite{Scott_1977,Scott_Tucker_1989,Maillot_2008}, even among infinite connected sums. However, a similar decomposition theorem has recently been proved for open manifolds admitting complete Riemannian metrics of uniformly positive scalar curvature (see Definition \ref{de:decay_infinity}): more specifically, for manifolds with finitely generated fundamental group using $K$-theory methods \cite{Chang_Weinberger_Yu_2010}, for manifolds with bounded geometry using Ricci flow techniques \cite{Bessieres_Besson_Maillot_2011}, and finally in the general case independently by Gromov \cite{Gromov_2023} and Wang \cite{Wang_2023}, using $\mu$-bubble theory.

\begin{theorem}[{\cite{Gromov_2023,Wang_2023}}] \label{th:Gromov_Wang}
	Let $M$ be a complete orientable Riemannian 3-manifold with uniformly positive scalar curvature. Then $M$ decomposes as a possibly infinite connected sum of spherical manifolds and $\Sp^2 \times \Sp^1$.
\end{theorem}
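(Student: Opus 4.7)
My plan is to reduce Theorem~\ref{th:Gromov_Wang} to the closed case via a $\mu$-bubble exhaustion. The uniform bound $\scal \geq c > 0$ is tight enough to control the topology of minimizers of weighted area functionals (so-called $\mu$-bubbles), and the idea is to use such minimizers as separating hypersurfaces to carve $M$ into compact pieces whose boundaries are disjoint unions of $2$-spheres. Capping off these spheres by $3$-balls, each piece becomes a closed orientable $3$-manifold that still carries a metric of positive scalar curvature, and the closed case then yields the desired decomposition of each piece. Reassembling globally will give a (possibly infinite) connected sum decomposition of $M$.

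For the local step, I would work in a sufficiently thin shell $U \subset M$ bounded by two smooth hypersurfaces $\partial_\pm U$ and introduce a smooth function $h \colon U \to \R$ diverging to $\mp\infty$ along $\partial_\pm U$. Minimizing the $\mu$-bubble functional $\mathcal A(\Omega) = \mathcal H^2(\partial^* \Omega \cap \intt U) - \int_\Omega h\, d\vol$ over Caccioppoli sets $\Omega$ containing a collar of $\partial_- U$ produces a smooth minimizer whose reduced boundary $\Sigma$ has prescribed mean curvature $h$. The second-variation inequality, combined with $\scal \geq c$ and a conformal choice of test function, forces every component of $\Sigma$ to be a $2$-sphere as soon as $h$ and $|\nabla h|$ are small enough compared to $c$ on $U$; thinness of the shell makes this achievable.

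Now fix a compact exhaustion $K_1 \subset K_2 \subset \dots$ of $M$ and iterate the $\mu$-bubble construction in a nested sequence of shells pushed far enough out. This yields disjoint families of $2$-spheres $\mathcal S_i$ separating $K_i$ from the complement of some larger compact set. Each region $P_i$ cut out between $\mathcal S_{i-1}$ and $\mathcal S_i$ has spherical boundary, and capping off by $3$-balls (extending the metric with positive scalar curvature by a standard bending of the collar) yields a closed orientable $3$-manifold $N_i$ of positive scalar curvature. By the closed case, each $N_i$ decomposes as a finite connected sum of spherical manifolds and $\Sp^2 \times \Sp^1$ summands; undoing the capping and gluing the $N_i$ back along $\mathcal S_i$ expresses $M$ as a possibly infinite connected sum of the same types of summands.

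The main obstacle, in my view, is the topological control of the $\mu$-bubble minimizer: ruling out higher-genus components of $\Sigma$ demands a very precise choice of $h$ and a conformal rescaling in the stability inequality, which is the analytic heart of the argument. A subtler issue is global: one must ensure that the successive sphere families $\mathcal S_i$ arrange themselves coherently into a genuine connected-sum decomposition of $M$. This requires discarding inessential spheres (those bounding balls on one side), verifying that the nesting pattern is tree-like, and checking that the finite decompositions of the pieces $N_i$ extend to a globally consistent, possibly infinite, decomposition of $M$.
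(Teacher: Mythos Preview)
Your outline is a faithful sketch of the $\mu$-bubble method used in the cited references \cite{Gromov_2023,Wang_2023}, and the steps you list (existence of a prescribed-mean-curvature minimizer in a shell with barriers, the stability computation forcing spherical topology, capping off and invoking the closed case, and the global reassembly) are the correct ones. So as a proof of Theorem~\ref{th:Gromov_Wang} your plan is sound.

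The paper, however, does \emph{not} prove Theorem~\ref{th:Gromov_Wang} this way; it recovers it as the special case of Theorem~\ref{th:main_general} via a different, more topological route. Uniform positivity of $\scal$ is used only once, through the Gromov--Lawson stable minimal disc estimate (Theorem~\ref{th:Gromov_Lawson}), to bound the fill radius of contractible curves in the universal cover. From that bound the paper deduces that $M$ is simply connected at infinity on contractible curves (Proposition~\ref{pr:fillrad_simplyconnectedatinfinity}), then runs a localised Loop-Theorem compression (Proposition~\ref{pr:localisedlooptheorem}) to produce an exhaustion by compact domains with \emph{incompressible} boundary. A separate fill-radius argument (Theorem~\ref{th:Ramachandran_Wolfson}) shows $\pi_1(M)$ has no finitely generated one-ended subgroup, forcing each incompressible boundary component to be a $2$-sphere; Kneser--Milnor on the pieces and the exclusion of aspherical summands (Corollary~\ref{co:no_aspherical_summands}) conclude.

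What each approach buys: your $\mu$-bubble route is analytically direct and produces the separating spheres in one stroke, but it is a $\mathcal{C}^2$ argument tied to the curvature hypothesis. The paper's fill-radius route trades the variational machinery for classical $3$-manifold topology; the payoff is that the decomposition theorem becomes a $\mathcal{C}^0$ statement (Theorem~\ref{th:main_general}) depending only on a growth condition for the fill radius, which is why it extends uniformly to scalar curvature with merely $C$-quadratic decay for $C>64\pi^2$.
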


In this paper, we will consider 3-manifolds admitting a complete Riemannian metric of positive scalar curvature with at most a quadratic decay at infinity.

\begin{definition} \label{de:decay_infinity}
	Let $M$ be a complete Riemannian $n$-dimensional manifold. Fix a basepoint $x \in M$, and denote by $r_x(y) = d(x,y)$ the distance function to $x$.
	
	\begin{enumerate}
		\item The scalar curvature of $M$ is \emph{uniformly positive} if there exists a constant $s_0 > 0$ such that $\scal \geq s_0 > 0$.
		\item The scalar curvature of $M$ has a \emph{decay at infinity of rate $\alpha \geq 0$ and constant $C > 0$} if there exists a constant $R_0 > 0$ such that for every $y \in M$ with $r_x(y) \geq R_0$,
	\begin{equation*}
		\scal(y) > \frac{C}{r_x(y)^\alpha}.
	\end{equation*}
	Notice that a change of basepoint would only modify the constant $R_0$ and would leave the constant $C$ unaltered. Hence, the basepoint $x \in M$ can be taken arbitrarily.
	\item The scalar curvature of $M$ has a \emph{subquadratic decay at infinity} if it decays at infinity at rate~$\alpha$ and constant $C$, for some $\alpha < 2$ and $C > 0$ .
	\item The scalar curvature of $M$ has \emph{at most $C$-quadratic decay at infinity} if it has a decay at infinity of rate $\alpha = 2$ and constant $C$, for some $C > 0$.
	\end{enumerate}
\end{definition}
With these definitions, if the scalar curvature of $M$ is uniformly positive with $\scal \geq s_0$, it decays at infinity at rate $\alpha$ and constant $C$, for any $\alpha \geq 0$ and $0 < C \leq s_0$. Similarly, if the scalar curvature of $M$ has a subquadratic decay at infinity, it has at most $C$-quadratic decay at infinity for any $C>0$. \medskip

Our main theorem extends the decomposition in Theorem \ref{th:Gromov_Wang} to complete Riemannian 3-manifolds of positive scalar curvature with at most a quadratic decay at infinity for some constant $C > 64 \pi^2$.

\begin{theorem} \label{th:main}
	Let $M$ be a complete orientable Riemannian 3-manifold. Suppose that $M$ has positive scalar curvature with at most $C$-quadratic decay at infinity for some $C > 64 \pi^2$.
	Then $M$ decomposes as a possibly infinite connected sum of spherical manifolds and $\Sp^2 \times \Sp^1$ summands.
\end{theorem}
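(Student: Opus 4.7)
The plan is to reduce the statement to a purely topological decomposition criterion phrased in terms of the fill radius of loops in the universal cover $\tilde M$, and then verify that criterion from the scalar curvature hypothesis. The abstract flags that the decomposition holds without any scalar curvature assumption under a suitable fill radius condition on the filling discs of closed curves in $\tilde M$; I would treat such a topological criterion as a separate result (established earlier in the paper) and take it as given. The task then splits cleanly into two pieces: (i) lift the metric to $\tilde M$ and check that scalar curvature with $C$-quadratic decay yields short fillings of closed curves in $\tilde M$, and (ii) verify that the resulting fill radius bound matches the hypothesis of the topological criterion precisely when $C>64\pi^2$.

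For (i), I would fix a loop $\gamma\subset\tilde M$ sitting at distance $\sim R$ from a lift $\tilde x$ of the basepoint, with $R\geq R_0$, and produce a filling surface $\Sigma$ by a warped $\mu$-bubble or stable minimal disc argument separating $\gamma$ from a controlled neighborhood. By a Schoen--Yau type rearrangement, each connected component of $\Sigma$ is a topological sphere or disc, and the stability inequality tested against $f\equiv 1$, combined with the Gauss equation $K_\Sigma=K_M(T\Sigma)-\frac{1}{2}|\II|^2$ and Gauss--Bonnet, gives
\[
\int_\Sigma \scal\, dA \;\leq\; 8\pi - \int_\Sigma |\II|^2\, dA \;\leq\; 8\pi.
\]
Combining this with the pointwise lower bound $\scal\geq C/r_{\tilde x}^2$ and the fact that $\Sigma$ lies in an annular region where $r_{\tilde x}\leq 2R$, I would conclude that $\vol(\Sigma)\leq 32\pi R^2/C$. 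A diameter comparison for stable $\mu$-bubbles with positive scalar curvature then converts this area bound into a bound on the intrinsic diameter of $\Sigma$, which controls how far the filling reaches away from $\gamma$.

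For (ii), I would propagate this estimate through the fill radius criterion. For $C>64\pi^2$, the area bound yields $\vol(\Sigma)<R^2/(2\pi)$, which, after the diameter comparison, gives a fill radius of $\gamma$ strictly smaller than the gap required by the topological theorem (the precise threshold $64\pi^2=(8\pi)^2$ arising from squaring the Gauss--Bonnet factor $8\pi$ against the inverse-square decay). Iterating this over exhausting families of loops in $\tilde M$ verifies the hypothesis of the topological decomposition criterion, and the connected sum decomposition follows.

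The main obstacle will be the quantitative calibration in the $\mu$-bubble step. A soft finite bound is easy; obtaining the sharp critical constant $64\pi^2$ requires choosing the prescribed mean curvature potential of the bubble optimally with respect to the distance function $r_{\tilde x}$, so that the scalar curvature budget $\int_\Sigma \scal\, dA\leq 8\pi$ is saturated by the contribution of the decay term $C/r_{\tilde x}^2$ without slack, and showing that the resulting area-to-diameter comparison for the topological $2$-sphere $\Sigma$ is sharp. The sharpness of $64\pi^2$ is consistent with the example $\R^2\times\Sp^1$ advertised in the abstract, which realises exactly quadratic decay and fails to decompose, so at the threshold the filling construction must just barely fail — confirming that the delicate point of the proof is indeed this constant-tracking step rather than the topological reduction.
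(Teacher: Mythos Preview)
Your high-level structure matches the paper exactly: reduce Theorem~\ref{th:main} to a curvature-free fill radius criterion on $\tilde M$ (this is Theorem~\ref{th:main_general}: if the fill radius of $\tilde M$ has at most $c$-linear growth with $c<\tfrac{1}{3}$, then $M$ decomposes), and then derive that criterion from the scalar curvature hypothesis (Proposition~\ref{pr:fillrad_estimate}). So the two-step reduction is right.

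However, your mechanism for step~(i) has a genuine gap. You propose to bound the \emph{area} of a filling surface via $\int_\Sigma \scal\,dA\leq 8\pi$ and then convert area to a fill radius bound by some diameter comparison. No such conversion is available here: an area bound on a stable minimal disc or $\mu$-bubble does not by itself control how far the surface extends away from $\gamma$, and you are also wavering between $\Sigma$ being a filling disc and a closed $2$-sphere. The paper bypasses area entirely. On a stable minimal disc $\Sigma$ with $\partial\Sigma=\gamma$, one plugs the test function $f=\cos\bigl(\tfrac{\pi}{2\rho}\,d_\Sigma(\cdot,\Omega)\bigr)$ into the Schoen--Yau rearranged stability inequality; the competition between $|\nabla f|^2$ and $\tfrac{1}{2}\scal\,f^2$ directly forces $\rho\leq\pi/\sqrt{s_K}$ on any compact region $K$ where $\scal\geq s_K$, giving $\fillrad(\gamma\subset K)\leq 2\pi/\sqrt{s_K}$ (Proposition~\ref{pr:corollary_GL}). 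This is the classical Gromov--Lawson fill radius bound, localised to $K$.

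Your accounting for the constant $64\pi^2$ is therefore also wrong: it is \emph{not} the Gauss--Bonnet $8\pi$ squared. A curve in $B(x,R)$ is filled inside the $\mu R$-neighbourhood $K$, where $\scal\geq C/(\mu R)^2$, so $\fillrad(\tilde\gamma)<\tfrac{2\pi}{\sqrt{C}}\,\mu R$; requiring this to stay inside $K$ (i.e.\ to be at most $(\mu-1)R$) optimises to linear growth constant $c=\tfrac{2\pi}{\sqrt{C}-2\pi}$. The threshold $c<\tfrac{1}{3}$ comes from the \emph{topological} side --- a diameter estimate for level sets in $\tilde M$ (Lemma~\ref{le:Gromov_Lawson_Lemma}) yields a factor $6c$ that must beat $2R$ in the one-end argument (Theorem~\ref{th:Ramachandran_Wolfson}). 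Hence $c<\tfrac{1}{3}$ iff $\sqrt{C}>2\pi+6\pi=8\pi$, i.e.\ $C>64\pi^2$; the appearance of $8\pi$ is the coincidence $2\pi+6\pi$, not Gauss--Bonnet.
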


The notion of infinite connected sum (modelled on a locally finite graph) will be reviewed and discussed in Section \ref{se:prime_decomposition}. Notice that in particular, if the scalar curvature is uniformly positive, then it has at most $C$-quadratic decay at infinity, for any $C > 64 \pi^2$. Thus, we recover Gromov and Wang's topological classification result under a weaker assumption with an approach focusing on the fill radius notion (see Theorem \ref{th:main_general} for a more general curvature-free result).\medskip

One may wonder whether the conclusion of Theorem \ref{th:main} holds under a weaker decay rate (that is, for $\alpha > 2$). The example of the manifold $\R^2 \times \Sp^1$ \cite[Section 3.10.2]{Gromov_2023} shows this is impossible. Indeed, the manifold $\R^2 \times \Sp^1$ admits a complete metric of positive scalar curvature decaying $\frac{1}{2}$-quadratically at infinity, but it does not decompose as an infinite connected sum of spherical manifolds and $\Sp^2 \times \Sp^1$; see Section \ref{se:example_R2xS1}. Therefore, the decay rate in Theorem \ref{th:main} is optimal.

As for the optimal value of the decay constant $C$ under which the conclusion of Theorem \ref{th:main} holds, this last example shows that we cannot hope for more than $C > \frac{1}{2}$ (while our result holds for $C > 64\pi^2$).

More generally, Gromov conjectured the following \cite[Section 3.6.1]{Gromov_2023}.
\begin{conjecture}[{Critical Rate of Decay Conjecture \cite{Gromov_2023}}] \label{co:critical_rate}
	There exists a dimensional constant $C_n > 0$ such that the following holds. Let $M$ be an orientable $n$-manifold that admits a complete Riemannian metric of positive scalar curvature.
	\begin{enumerate}
		\item For every $C < C_n$, there exists a complete Riemannian metric on $M$ of positive scalar curvature with at most $C$-quadratic decay at infinity.
		\item If $M$ admits a complete Riemannian metric with positive scalar curvature with $C$-quadratic decay at infinity for $C > C_n$, then $M$ admits a complete Riemannian metric with uniformly positive scalar curvature. \label{it:co:critical_rate}
	\end{enumerate}
\end{conjecture}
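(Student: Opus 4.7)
The conjecture splits into a construction statement (1) and a rigidity statement (2), and the plan is to attack (2) first since a non-optimal version is already within reach. Indeed, for $C_n \leq 64\pi^2$, the hypothesis of (2) implies, by Theorem~\ref{th:main}, that $M$ decomposes as a (possibly infinite) connected sum of spherical manifolds and $\Sp^2 \times \Sp^1$ summands. From such a decomposition one then builds a complete metric of uniformly positive scalar curvature by an infinite Gromov--Lawson surgery procedure along the connecting spheres: each prime summand carries a psc metric, and the surgery can be performed in arbitrarily small collar neighbourhoods of the spheres while keeping scalar curvature uniformly bounded below. The residual task in (2) is therefore to identify the sharp dimensional constant, which requires strengthening Theorem~\ref{th:main} to the optimal decay constant.

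For part (1), I would proceed constructively. The model $\R^2 \times \Sp^1$ of Section~\ref{se:example_R2xS1}, whose warped metric already yields $C_n \geq \tfrac{1}{2}$, provides the prototype: warped products of the form $dr^2 + f(r)^2 g_0$ with carefully chosen $f$ produce ends with scalar curvature decaying exactly $C$-quadratically. Starting from the decomposition supplied by (2), I would attach such warped ends onto prime summands of any $M$ admitting a complete psc metric, tuning $f$ so that the induced decay constant is arbitrarily close to but below $C_n$. One then has to glue these ends into the ambient uniformly psc metric via Gromov--Lawson--type surgeries that do not degrade the decay rate; some care is required at the transition between uniformly positive and decaying regions.

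The main obstacle will be pinpointing the exact value of $C_n$ and matching the upper and lower bounds. On the upper side, the fill-radius argument behind Theorem~\ref{th:main} produces the constant $64\pi^2$, coming from the isoperimetric inequality on the comparison sphere; pushing this constant down to the conjecturally sharp value would demand a much finer analysis of $\mu$-bubbles, or a stability/rigidity version of the fill-radius inequality at the extremal scalar curvature profile. On the lower side, producing explicit metrics with $C$ arbitrarily close to $C_n$---and not merely close to $\tfrac{1}{2}$---requires a warping construction whose optimality is itself nontrivial. The two sides seem \emph{genuinely linked}: determining $C_n$ likely amounts to solving a single extremal problem for warped ends on $\Sp^2 \times \Sp^1$-summands, and I expect that only a combined scheme, in which the optimal construction and the sharp rigidity statement meet at a common extremal metric, will close the gap. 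Until then, a realistic intermediate target is to reduce $64\pi^2$ by examining whether the quantitative slack in our fill-radius estimate can be absorbed into a sharper isoperimetric comparison.
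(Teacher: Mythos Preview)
This statement is a \emph{conjecture}, and the paper does not claim to prove it; it is quoted from \cite{Gromov_2023} as motivation. What the paper does prove is the partial result recorded as Corollary~\ref{co:surgery_uniform_scal}: part~(2) holds in dimension $n=3$ for any choice of constant $C_3 \geq 64\pi^2$. Your outline of that partial case---apply Theorem~\ref{th:main} to obtain the connected-sum decomposition, then run an infinite Gromov--Lawson surgery along the spheres to produce a uniformly positive scalar curvature metric---is exactly the argument the paper gives in Section~\ref{se:surgery} (Theorem~\ref{th:surgery} and Proposition~\ref{pr:surgery_metric}). So on the only piece the paper actually establishes, your proposal matches it.

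Everything else in your proposal is a research programme beyond the paper, and parts of it have gaps worth flagging. First, Theorem~\ref{th:main} is three-dimensional; your plan for~(2) says nothing about $n\geq 4$, where no analogue of the prime decomposition is available. Second, your attack on~(1) is circular as written: you invoke ``the decomposition supplied by~(2)'', but the hypothesis of~(1) is merely that $M$ carries \emph{some} complete psc metric, not one with $C$-quadratic decay, so Theorem~\ref{th:main} does not apply and no decomposition is guaranteed. Third, ``attaching warped ends onto prime summands'' changes the topology of $M$, which is not permitted; what you would need instead is to modify the given metric on the existing ends of $M$ so that it acquires the desired decay, and it is not clear how the $\R^2\times\Sp^1$ warped model transplants onto an arbitrary end. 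Finally, the suggestion that both halves of the conjecture meet at a single extremal warped metric is plausible heuristics but not an argument; the paper makes no claim in that direction and leaves the sharp value of $C_n$ entirely open.
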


The following rigidity result, which addresses the case \eqref{it:co:critical_rate} of Conjecture \ref{co:critical_rate}, is a direct consequence of Theorem \ref{th:main} and an adaptation of Gromov--Lawson's Surgery Theorem \cite[Theorem A]{Gromov_Lawson_1980b}; see Section \ref{se:surgery}.

\begin{corollary} \label{co:surgery_uniform_scal}
	Let $M$ be an orientable 3-manifold. If $M$ admits a complete Riemannian metric of positive scalar curvature with at most $C$-quadratic decay at infinity for some $C > 64 \pi^2$, it also admits a complete Riemannian metric with uniformly positive scalar curvature.
\end{corollary}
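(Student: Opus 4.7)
The plan is to combine Theorem \ref{th:main} with a suitable infinite-sum version of the Gromov--Lawson surgery construction \cite{Gromov_Lawson_1980b}. First I would apply Theorem \ref{th:main} to the given metric on $M$: since the scalar curvature is positive with at most $C$-quadratic decay for some $C > 64\pi^2$, one obtains a purely topological decomposition of $M$ as a (possibly infinite) connected sum of spherical manifolds $\Sp^3/\Gamma_i$ and $\Sp^2\times\Sp^1$ summands, modelled on a locally finite graph in the sense of Section \ref{se:prime_decomposition}. Once we have this, the original problem becomes curvature-free: we just need to exhibit \emph{some} complete metric of uniformly positive scalar curvature on any 3-manifold of this form.

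Next I would construct such a metric on each prime summand separately. Every spherical manifold $\Sp^3/\Gamma$ carries the round metric, which has constant positive scalar curvature; and $\Sp^2\times\Sp^1$ carries a product metric $g_{\Sp^2}(r)\oplus g_{\Sp^1}$ whose scalar curvature equals the scalar curvature of $\Sp^2(r)$, hence is a positive constant. Thus every summand has a canonical reference metric with scalar curvature uniformly bounded below by some universal constant $s_0 > 0$ (which, if needed, can be arranged by rescaling each sphere to a fixed radius). The task is then to assemble these local metrics into a single complete metric on the total connected sum without losing the uniform lower bound.

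The third and key step is an infinite-sum adaptation of the Gromov--Lawson surgery theorem, which is the content of Section \ref{se:surgery}. For each edge of the underlying locally finite graph representing the connected sum, one removes a small round ball from each of the two glued summands and inserts a standard neck $\Sp^2\times[0,1]$ with a rotationally symmetric ``torpedo'' metric whose scalar curvature stays bounded below by a fixed positive constant independent of the edge. The Gromov--Lawson surgery procedure ensures that the transition metric on the neck can be chosen so that the resulting metric on the connected sum has scalar curvature bounded below by a definite positive constant, and because the graph is locally finite the surgeries can be performed simultaneously in a locally finite manner, yielding a complete metric with uniformly positive scalar curvature on all of $M$.

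The main obstacle I expect is precisely this last step: the classical Gromov--Lawson surgery is designed for finitely many surgeries on closed manifolds and does not automatically produce a \emph{uniform} scalar-curvature lower bound when one performs infinitely many surgeries on an open manifold. To handle it one must verify that the neck metrics can be chosen from a single compact family (so that the scalar curvature infimum is strictly positive) and that completeness is preserved under the infinite gluing, both of which require carefully tracking the geometry of the standard necks and of the reference metrics on the summands. Once this uniform infinite surgery is established, as announced in Section \ref{se:surgery}, the corollary follows immediately from Theorem \ref{th:main}.
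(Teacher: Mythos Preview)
Your sketch is correct and follows essentially the same route as the paper: apply Theorem~\ref{th:main} to reduce to a topological statement, equip each summand with a standard metric of scalar curvature $\geq s_0$, and then invoke a quantitative version of the Gromov--Lawson surgery (the paper's Proposition~\ref{pr:surgery_metric}) with a fixed $\alpha\in(0,1)$ so that every neck has $\scal\geq \alpha s_0$, exactly as in the proof of Theorem~\ref{th:surgery}. The uniformity concern you flag is handled precisely by this fixed-$\alpha$ version of the local construction.
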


Actually, we will deduce Theorem \ref{th:main} from a more general statement, which involves the following notion of fill radius, introduced in \cite[Section 10]{Gromov_Lawson_1983} and \cite{Schoen_Yau_1983}, (related to the more general notion of filling radius presented in \cite{Gromov_1983}).

\begin{definition} \label{de:metric_neighbourhood}
	Let $M$ be a Riemannian $n$-manifold with possibly nonempty boundary. Given a subset $Z \subset M$, denote by
	\begin{equation} \label{eq:metric_neighbourhood}
		U(Z,R) := \set{x \in M \mid d(x,Z) \leq R}
	\end{equation}
	the closed $R$-neighbourhood of $Z$ in $M$.
	
	The \emph{fill radius} of a contractible closed curve $\gamma$ in $M$ is defined as
	\begin{equation*}
		\fillrad{(\gamma)} := \sup\set{R \geq 0 \mid d(\gamma, \partial M) > R \text{ and } [\gamma] \neq 0 \in \pi_1(U(\gamma , R))}.
	\end{equation*}
	Define also
	\begin{equation*}
		\fillrad{(M)} := \sup\set{\fillrad{(\gamma)} \mid \gamma \text{ contractible closed curve of } M}.
	\end{equation*}
\end{definition}

For instance, the standard sphere~$\Sp^2$ and the standard Riemannian cylinder $\Sp^2 \times \R$ satisfy $\fillrad(\Sp^2) = \fillrad{(\Sp^2 \times \R)} = \frac{\pi}{2}$ as observed in \cite[Remark 1]{Schoen_Yau_1983}. 
If a manifold $M$ has bounded diameter, then $\fillrad(M) \leq \diam(M)$. In particular, by the Bonnet-Myers theorem, a manifold of uniformly positive Ricci curvature must have bounded fill radius. In \cite{Ramachandran_Wolfson_2010}, the authors conjectured that some weaker conditions of positive curvature (namely, uniformly 2-positive Ricci curvature and uniformly positive isotropic curvature) also imply a bound on the fill radius. For 3-manifolds of uniformly positive scalar curvature, the following bound on the fill radius was established in \cite{Gromov_Lawson_1983,Schoen_Yau_1983}; see also \cite[Theorem 9.3.1]{Wolfson_2012} for another exposition.

\begin{theorem}[{\cite[Theorem 10.7]{Gromov_Lawson_1983}, \cite{Schoen_Yau_1983}}] \label{th:Gromov_Lawson}
	Let $M$ be a complete Riemannian 3-manifold with bounded geometry, possibly with nonempty boundary. If $\scal \geq s_0 > 0$, then
	\begin{equation} \label{eq:Gromov_Lawson_fillrad_estimate}
		\fillrad(M) \leq \frac{2\pi}{\sqrt{s_0}}.
	\end{equation}
\end{theorem}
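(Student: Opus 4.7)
The strategy is the classical stable-minimal-disc method of Schoen--Yau and Gromov--Lawson. Suppose, toward a contradiction, that $\fillrad(M) > 2\pi/\sqrt{s_0}$; then there exist a contractible closed curve $\gamma \subset M$ and some $R > 2\pi/\sqrt{s_0}$ with $d(\gamma, \partial M) > R$ and $[\gamma] \neq 0$ in $\pi_1(U(\gamma, R))$. Since $\gamma$ is null-homotopic in $M$, I would first solve a Plateau problem with boundary $\gamma$ to produce an area-minimising, and hence stable, immersed disc $\Sigma$ with $\partial \Sigma = \gamma$; the bounded geometry assumption supplies the compactness needed to carry out the minimisation in a non-compact ambient, and the slab condition $d(\gamma, \partial M) > R$ will ultimately prevent $\Sigma$ from meeting $\partial M$.

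The core step is to couple the stability inequality with the Gauss equation. For every $f \in C^\infty_0(\intt \Sigma)$, stability of $\Sigma$ yields
\[
  \int_\Sigma \abs{\nabla f}^2 \, dA \;\geq\; \int_\Sigma \bigl( \abs{\II}^2 + \Ric_M(\nu,\nu) \bigr) f^2 \, dA.
\]
Using the identity $\scal_M = 2\Ric_M(\nu,\nu) + 2 K_M(T\Sigma)$ in dimension three, the Gauss formula $K_\Sigma = K_M(T\Sigma) + \det(\II)$, and the minimality relation $\det(\II) = -\tfrac{1}{2}\abs{\II}^2$, the right-hand integrand can be rewritten as $\tfrac{1}{2}\scal_M - K_\Sigma + \tfrac{1}{2}\abs{\II}^2$; dropping the non-negative $\abs{\II}^2$ term gives
\[
  \int_\Sigma \bigl( \abs{\nabla f}^2 + K_\Sigma f^2 \bigr) \, dA \;\geq\; \tfrac{1}{2} \int_\Sigma \scal_M \, f^2 \, dA \;\geq\; \tfrac{s_0}{2} \int_\Sigma f^2 \, dA.
\]

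To turn this Rayleigh-type estimate into a metric bound, I would test against $f = \varphi \circ t$, where $t(x) := d_\Sigma(x, \partial \Sigma)$ and $\varphi$ is compactly supported in $[0, L]$ with $L := \sup_\Sigma t$ and $\varphi(0) = 0$. The co-area formula on the left, together with Gauss--Bonnet applied to the tubular regions $\{t \leq \tau\}$ on the right (which exchanges the $K_\Sigma$ integral for Euler-characteristic and geodesic-curvature terms), converts the displayed inequality into a one-dimensional Jacobi-type comparison with the equation $\varphi'' + \tfrac{s_0}{4}\varphi = 0$. Testing against a half-period of $\sin\bigl(\tfrac{\sqrt{s_0}}{2}\,t\bigr)$ forces $L \leq 2\pi/\sqrt{s_0}$. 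Since the ambient distance in $M$ is bounded above by the intrinsic distance in $\Sigma$, the disc $\Sigma$ is then contained in $U(\gamma, 2\pi/\sqrt{s_0}) \subset U(\gamma, R)$; this exhibits $\gamma$ as null-homotopic inside $U(\gamma, R)$, contradicting the choice of $\gamma$.

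The main obstacles are analytic rather than conceptual. The first is ensuring the existence and regularity of the area-minimising disc with prescribed boundary in a complete, possibly incomplete-boundaried, non-compact 3-manifold, and confirming that the minimiser does not escape through $\partial M$; this is where the bounded geometry hypothesis, combined with the slab condition on $\gamma$, plays its essential role. The second, more technical, difficulty is carrying out the Jacobi comparison rigorously in the presence of the cut locus of $\partial \Sigma$ in $\Sigma$ and the possible non-smoothness of the level sets of $t$; a cleaner alternative, which I would pursue if the co-area route becomes cumbersome, is to pass to the conformal metric $u^2 g_\Sigma$ for $u$ a positive solution of the stability operator. The Gauss curvature of the conformal metric then inherits a lower bound proportional to $\scal_M$, and a Bonnet--Myers-type argument gives the intrinsic diameter bound directly.
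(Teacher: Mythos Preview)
Your outline is sound and would yield a proof, but it is organised differently from the argument the paper reproduces. Note first that the paper does not prove Theorem~\ref{th:Gromov_Lawson} in place---it is quoted from \cite{Gromov_Lawson_1983,Schoen_Yau_1983}---but the Gromov--Lawson argument is carried out in detail in the proof of Proposition~\ref{pr:corollary_GL}, so that is the relevant comparison.

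The main structural difference is where the test function is centred. You measure intrinsic distance in $\Sigma$ from $\partial\Sigma=\gamma$ and aim to bound the inradius $L=\sup_\Sigma d_\Sigma(\cdot,\gamma)$, forcing $\Sigma\subset U(\gamma,L)$. The paper instead fixes $\rho>\pi/\sqrt{s_0}$, passes to a component $\sigma$ of the level set $\{d_\Sigma(\cdot,\gamma)=\rho\}$ that is still non-bounding in $U(\gamma,2\rho)$, and centres the cosine test function $f(x)=\cos\bigl(\tfrac{\pi}{2\rho}\,d_\Sigma(x,\Omega)\bigr)$ on a small neighbourhood $\Omega$ of $\sigma$, with support in the two-sided tube $\Omega(\rho)$; the stability inequality then contradicts $\rho>\pi/\sqrt{s_0}$. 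Centring at an interior curve buys something the paper actually needs: the support of $f$ sits in a prescribed annulus, which is precisely what makes the \emph{localised} version (Proposition~\ref{pr:corollary_GL}) go through when $\scal\ge s_K$ is only assumed on a compact $K$. Your distance-from-boundary arrangement, closer in spirit to Schoen--Yau, is tidy for the global statement but less suited to that localisation. For the existence of $\Sigma$, the paper is also more concrete than your bounded-geometry compactness appeal: it encloses $\gamma$ and a filling disc in a large ball, deforms the metric near its boundary to a Riemannian product (hence mean-convex), and invokes Meeks--Yau \cite{Meeks_Yau_1982} in the resulting compact domain. One minor caution in your Gauss--Bonnet step: applying it to the sublevel regions $\{t\le\tau\}$ drags in the uncontrolled boundary term $\int_\gamma k_g$; working instead with the superlevel sets $\{t\ge\tau\}$ (each component of which is a topological disc, so $\chi\ge 1$) avoids this cleanly.
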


The value of the constant $2\pi$ was obtained by Gromov--Lawson \cite{Gromov_Lawson_1983}. Using a different argument, Schoen--Yau \cite{Schoen_Yau_1983} derived Theorem \ref{th:Gromov_Lawson} for the sharper value of the constant $\sqrt{\frac{8}{3}}\pi$, and recently Hu--Xu--Zhang in  \cite{Hu_Xu_Zhang_2024} showed that the latter is optimal. However, we will use Gromov--Lawson's constant, since their proof is more suitable to our case; see Proposition \ref{pr:corollary_GL}.

Theorem \ref{th:Gromov_Lawson} implies that when $M$ is a complete orientable 3-manifold with bounded geometry and uniformly positive scalar curvature $\scal \geq s_0 > 0$, then the universal Riemannian cover $\tilde{M}$ of~$M$ satisfies $\fillrad{(\tilde{M})} \leq 2\pi / \sqrt{s_0}$. Therefore, an upper bound on the fill radius of the universal cover provides a generalisation of the notion of uniformly positive scalar curvature.\medskip

If a complete orientable 3-manifold $M$ has positive scalar curvature decaying at infinity, then the fill radius is not necessarily bounded in general. Still, if the decay is not too pronounced, one can control the growth of the fill radius of the lifts to the universal cover of the closed curves contractible in $M$. This property will serve as a generalisation of the notion of positive scalar curvature with at most $C$-quadratic decay at infinity for some $C > 64 \pi^2$; see Proposition \ref{pr:generalised_condition}. Notice that if $M$ is a compact manifold, the fill radius of contractible curves of $M$ is uniformly bounded by $\diam{(M)}$, which is why we consider the lifts of such curves to the universal cover.\medskip

\begin{definition}
	Let $M$ be a complete Riemannian manifold, and denote by $\tilde{M}$ its universal Riemannian cover. Fix a basepoint $x \in M$. Denote by $B(x,R)$ the closed metric ball of radius $R$ centered at $x$.
	\begin{enumerate}
		\item The fill radius of $\tilde{M}$ \emph{grows at infinity at rate} $\beta \geq 0$ \emph{and with constant} $c > 0$ if there is a constant $R'_0 \geq 0$ such that if $R \geq R'_0$, then for every closed curve $\gamma$ lying in $B(x,R)$ and contractible in $M$, any of its lifts $\tilde{\gamma}$ to $\tilde{M}$ satisfies
	\begin{equation*}
		\fillrad{(\tilde{\gamma})} < c R^\beta.
	\end{equation*}
	Again, notice that the basepoint $x \in M$ may be taken arbitrarily since a change of basepoint does not affect the rate $\beta$, nor the value of the constant $c$.
	\item The fill radius of $\tilde{M}$ has \emph{sublinear growth at infinity} if it grows at infinity at rate $\beta$ and constant $c$, for some $\beta < 1$ and $c > 0$.
	\item The fill radius of $\tilde{M}$ has \emph{at most $c$-linear growth at infinity} if it grows a rate $\beta = 1$ and constant $c$, for some $c > 0$.
	\end{enumerate}
\end{definition}

For instance, the fill radius of the Euclidean 3-dimensional space $\R^3$ grows at infinity at rate~$\beta = 1$ for every constant $c > 1$. In the same direction, we have the following result.

\begin{proposition} \label{pr:generalised_condition}
	Let $M$ be an orientable complete Riemannian 3-manifold, and denote by $\tilde{M}$ its universal Riemannian cover. Suppose that $M$ has positive scalar curvature with at most $C$-quadratic decay at infinity for some $C > 64 \pi^2$. Then, the fill radius of $\tilde{M}$ has at most $c$-linear growth at infinity for some $c < \frac{1}{3}$.
\end{proposition}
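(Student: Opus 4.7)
The plan is to convert the quadratic decay of the scalar curvature of $M$ into a uniform lower bound for the scalar curvature of $\tilde M$ on a metric neighborhood of $\tilde\gamma$, plug this bound into (a localised form of) the Gromov--Lawson fill radius estimate (Theorem~\ref{th:Gromov_Lawson}) announced as Proposition~\ref{pr:corollary_GL}, and solve the resulting implicit inequality for $\rho := \fillrad(\tilde\gamma)$.

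Fix $x \in M$ and let $\gamma \subset B(x,R)$ be a closed curve contractible in $M$, with lift $\tilde\gamma$ to $\tilde M$ (necessarily closed and contractible). Since the covering projection $\pi : \tilde M \to M$ is a local isometry and hence $1$-Lipschitz, every $\tilde y \in U(\tilde\gamma, \rho)$ projects to $\pi(\tilde y) \in U(\gamma, \rho) \subset B(x, R+\rho)$, with $\scal(\tilde y) = \scal(\pi(\tilde y))$. For $\pi(\tilde y)$ outside $B(x, R_0)$ the decay hypothesis gives $\scal(\pi(\tilde y)) > C/r_x(\pi(\tilde y))^2 \geq C/(R+\rho)^2$; inside the compact ball $\overline{B(x, R_0)}$, scalar curvature is bounded below by some constant $s_{\min} > 0$. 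Choosing the growth threshold $R'_0 \geq \sqrt{C/s_{\min}}$ forces $C/(R+\rho)^2 \leq s_{\min}$ whenever $R \geq R'_0$, so that the uniform bound $\scal \geq s_0 := C/(R+\rho)^2$ holds on the whole of $U(\tilde\gamma, \rho)$.

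Next, I would invoke Proposition~\ref{pr:corollary_GL}, the localised form of Theorem~\ref{th:Gromov_Lawson} suited to this situation, to conclude that $\rho = \fillrad(\tilde\gamma) \leq 2\pi/\sqrt{s_0}$. Substituting $s_0 = C/(R+\rho)^2$ yields
\[
\rho \leq \frac{2\pi(R+\rho)}{\sqrt{C}},
\]
which rearranges into $\rho(\sqrt{C}-2\pi) \leq 2\pi R$, that is, $\rho \leq c_0 R$ with $c_0 := 2\pi/(\sqrt{C} - 2\pi)$. The assumption $C > 64\pi^2$ is equivalent to $\sqrt{C} > 8\pi$, giving $c_0 < 2\pi/(6\pi) = 1/3$. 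One can thus pick any constant $c \in (c_0, 1/3)$ and conclude $\fillrad(\tilde\gamma) < cR$ for every $R \geq R'_0$.

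The main subtlety is invoking the Gromov--Lawson estimate in a universal cover whose scalar curvature is not uniformly bounded below, nor is the geometry necessarily bounded, so that the standard statement of Theorem~\ref{th:Gromov_Lawson} does not apply directly. Everything hinges on Proposition~\ref{pr:corollary_GL} providing a sufficiently sharp localised version---where the minimal surface produced by Gromov--Lawson's construction is confined to a neighborhood of $\tilde\gamma$ of radius $2\pi/\sqrt{s_0}$---so that the scalar lower bound established above on $U(\tilde\gamma, \rho)$ suffices to control $\fillrad(\tilde\gamma)$.
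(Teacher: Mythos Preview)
Your argument is correct and follows essentially the same route as the paper's proof (Proposition~\ref{pr:fillrad_estimate}): both feed the scalar lower bound $C/(R+\rho)^2$ on a suitable neighbourhood into Proposition~\ref{pr:corollary_GL} and solve the resulting inequality to obtain $c_0 = 2\pi/(\sqrt{C}-2\pi) < \tfrac{1}{3}$. The only cosmetic difference is that the paper parametrises the neighbourhood by an auxiliary factor $\mu$ (taking $K = p^{-1}(B(x,\mu R))$ and then optimising in $\mu$), whereas you work directly with $K = U(\tilde\gamma,\rho)$ and the unknown $\rho = \fillrad(\tilde\gamma)$; your choice has the minor advantage that $K$ is genuinely compact, so Proposition~\ref{pr:corollary_GL} applies verbatim.
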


We will prove the topological decomposition of Theorem \ref{th:main} by replacing the scalar curvature assumption with this weaker condition about the filling discs of the lifts of contractible closed curves, namely that the fill radius of $\tilde{M}$ has at most $c$-linear growth at infinity with $c < \frac{1}{3}$.

\begin{theorem} \label{th:main_general}
	Let $M$ be an orientable complete Riemannian 3-manifold, and denote by $\tilde{M}$ its universal Riemannian cover. Suppose that the fill radius of $\tilde{M}$ has at most $c$-linear growth at infinity for some $c < \frac{1}{3}$. Then $M$ decomposes as a possibly infinite connected sum of spherical manifolds and $\Sp^2 \times \Sp^1$.
\end{theorem}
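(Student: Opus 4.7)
The plan is to construct a nested exhaustion of $M$ by compact codimension-zero submanifolds $K_1 \subset K_2 \subset \cdots$ whose boundaries are disjoint unions of embedded 2-spheres, apply the classical Kneser--Milnor prime decomposition to each $K_n$, and assemble the local decompositions into a (possibly infinite) connected sum via the formalism of Section \ref{se:prime_decomposition}. The fill radius hypothesis enters both in the geometric construction of the exhaustion and in controlling the topological type of the prime summands.

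For the exhaustion, fix a basepoint $x \in M$ and an increasing sequence $R_n \to \infty$ with $R_n \geq R'_0$. By a generic perturbation (Sard's theorem), $\partial B(x, R_n)$ is a smoothly embedded closed surface in $M$. Positive-genus components $\Sigma$ of this boundary will be reduced to 2-spheres by compressions along essential simple closed curves $\gamma \subset \Sigma$ that are contractible in $M$. For such a $\gamma$, a lift $\tilde\gamma \subset \tilde M$ is a contractible loop with $\fillrad(\tilde\gamma) < c R_n$ by hypothesis, so $\tilde\gamma$ bounds a singular disc within its $cR_n$-neighbourhood in $\tilde M$. Projecting and invoking the loop theorem of Papakyriakopoulos yields an embedded compressing disc $D \subset M$ contained in $B(x, (1+c)R_n)$. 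Iterating produces a compact $K_n$ with $B(x, R_n) \subset K_n \subset B(x, (1+c)R_n)$ and $\partial K_n$ a disjoint union of 2-spheres. The $K_n$ can be chosen nested because the threshold $c < \frac{1}{3}$ provides enough slack between successive compression radii to keep the modifications at scale $R_n$ disjoint from those at scales $R_{n-1}$ and $R_{n+1}$.

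Capping the boundary 2-spheres of $K_n$ with 3-balls yields a closed orientable $\hat{K}_n$ which admits a finite Kneser--Milnor decomposition. By Perelman's geometrisation theorem, each prime summand is spherical, $\Sp^2 \times \Sp^1$, or aspherical; the last case must be excluded. If $P$ were an aspherical prime summand, then $\pi_1(P) \hookrightarrow \pi_1(M)$, and $\tilde M$ would contain an embedded isometric copy of the universal cover $\tilde P$ (a contractible non-Euclidean open 3-manifold) minus an orbit of small balls. Inside this embedded copy one can exhibit closed curves $\tilde\gamma \subset \tilde M$ of arbitrarily large fill radius -- for instance, large circles sitting deep inside the lift of $P$ -- whose projections $\gamma$ are contractible loops in $M$ confined to a bounded neighbourhood of $P$. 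Such curves violate the $c$-linear growth bound for any finite $c$, contradicting the hypothesis. The motivating example here is $M = \R^3/\Z^3$, whose universal cover $\R^3$ has fill radius growing linearly with constant $1$.

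Finally, the $K_n$ are nested and their prime decompositions are compatible: each prime factor of $K_{n+1} \setminus \intt(K_n)$ is attached to $K_n$ across a 2-sphere component of $\partial K_n$. Assembling along the locally finite graph formalism of Section \ref{se:prime_decomposition} produces the desired connected sum decomposition of $M$. The main obstacle I anticipate is the compression step: one must verify that, for each positive-genus boundary component, essential curves contractible in $M$ do exist; that the loop theorem applied iteratively yields disjoint embedded compressing discs; and that all such discs fit inside $B(x, (1+c)R_n)$ with $c < \frac{1}{3}$ so that the exhaustion remains nested at the next scale. Tracking these constants carefully through the iterated compression is where the numerical threshold $c < \frac{1}{3}$ -- rather than any $c < 1$ -- will need to be pinned down.
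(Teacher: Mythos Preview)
Your overall architecture is right --- exhaust by compact domains with spherical boundary, apply Kneser--Milnor to the pieces, rule out aspherical summands --- but there is a genuine gap at the compression step, and you have misidentified where the threshold $c < \tfrac{1}{3}$ actually enters.

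\textbf{The compression does not terminate at spheres.} You propose to compress each positive-genus component of $\partial B(x,R_n)$ along essential simple closed curves that are contractible in $M$. But such curves need not exist: compression halts as soon as the surface becomes \emph{incompressible} in $M$, and an incompressible surface can perfectly well have positive genus (think of a torus). Your fill radius bound only locates a disc for a curve that is already known to be contractible in $M$; it says nothing if no essential curve on $\Sigma$ is contractible. The paper does not try to compress all the way to spheres directly. Instead it compresses to incompressible surfaces (Proposition~\ref{pr:prime_decomposition}), and then invokes a separate structural result: Theorem~\ref{th:Ramachandran_Wolfson} shows that under the $c<\tfrac{1}{3}$ hypothesis, $\pi_1(M)$ contains no finitely generated subgroup with exactly one end. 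Since the fundamental group of any closed orientable surface of positive genus has one end, an incompressible $\Sigma$ with $\pi_1(\Sigma)\hookrightarrow\pi_1(M)$ is forced to be a sphere. This one-end theorem is the missing ingredient, and it is precisely where $c<\tfrac{1}{3}$ is used --- not in the nesting of the exhaustion, which only requires $c<1$ (via Proposition~\ref{pr:fillrad_simplyconnectedatinfinity}).

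\textbf{The aspherical exclusion is also incomplete.} Your argument that an aspherical summand $P$ forces curves of arbitrarily large fill radius in $\tilde M$ is plausible for $T^3$ but is asserted, not proved, in general: you would need to show that the universal cover of \emph{every} closed aspherical 3-manifold contains contractible loops of unbounded fill radius, and that this persists after embedding $\tilde P$ (minus balls) into $\tilde M$, where filling discs may escape through the connecting spheres. The paper sidesteps this entirely: a closed aspherical manifold has one-ended fundamental group (Proposition~\ref{pr:aspherical_oneend}), so the same one-end theorem rules it out. In short, both the ``boundaries are spheres'' step and the ``no aspherical summands'' step are handled by a single group-theoretic obstruction that you have not supplied.
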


This result yields the same topological decomposition as Gromov--Wang's theorem under a weaker, more robust, assumption. In particular, it applies to metrics of positive scalar curvature with at most $C$-quadratic decay at infinity for some $C > 64 \pi^2$, and not just of uniformly positive scalar curvature. More generally, Theorem \ref{th:main_general} does not require any curvature assumption and relies on $\mathcal{C}^0$ topological arguments, rather than on $\mathcal{C}^2$ analytical ones. In particular, the proof of Theorem \ref{th:main_general} relies neither on the $\mu$-bubble theory, nor on the minimal surface approach. Interestingly, and somewhat surprisingly, this general approach leads to an optimal statement in the decay rate at infinity despite the lack of analytical tools.\medskip

A direct consequence of Theorem \ref{th:main_general} is that complete Riemannian 3-manifolds such that the fill radius of their universal cover has at most $c$-linear growth at infinity for some $c < \frac{1}{3}$ are not aspherical. This extends a theorem of Gromov and Lawson \cite[Theorem E]{Gromov_Lawson_1983} asserting that closed aspherical 3-manifolds do not admit Riemannian metrics with positive scalar curvature. Actually, we prove the following more general statement: closed $\Q$-essential manifolds of dimension $n \geq 2$ do not support Riemannian metrics such that their Riemannian universal cover has finite fill radius; see Proposition \ref{pr:Qessential}. This results answers a weaker version of a conjecture by Gromov in \cite[Section 4, Conjecture 12]{Gromov_2017} and \cite[Section 3.2]{Gromov_2023}; see Conjecture \ref{conj:Gromov_Q_essential}.\\

The paper is structured as follows. Consider an orientable complete Riemannian 3-manifold $M$ with positive scalar curvature of at most $C$-quadratic decay at infinity for some $C > 64 \pi^2$. In Section \ref{se:scal_fillrad}, we prove Proposition \ref{pr:generalised_condition} by showing that the universal cover $\tilde{M}$ has fill radius with at most $c$-linear growth at infinity for some $c < \frac{1}{3}$. In Section \ref{se:simply_connected_infinity}, we prove that $M$ is simply connected at infinity on contractible curves. In Section \ref{se:localised_compression}, we obtain an exhaustion by compact domains for~3-manifolds that are simply connected at infinity on contractible curves. In Section \ref{se:number_ends}, we introduce the notion of ends of a group and we prove that complete Riemannian manifolds whose universal cover has fill radius with $c$-linear growth at infinity for some $c < \frac{1}{3}$ have virtually free fundamental group. This extends a theorem of Ramachandran--Wolfson \cite{Ramachandran_Wolfson_2010} who proved the same result for closed manifolds whose universal cover has bounded fill radius. In Section \ref{se:aspherical_summands}, we show that such manifolds cannot decompose as a connected sum involving an aspherical summand. In Section \ref{se:prime_decomposition}, we prove Theorem \ref{th:main_general}, and we show its optimality in Section \ref{se:example_R2xS1}. Finally, in Section \ref{se:surgery} we prove Corollary \ref{co:surgery_uniform_scal}. \\

\noindent\textit{Acknowledgement}. The authors would like to thank the referee for their detailed and useful comments.

\section{Scalar curvature and fill radius} \label{se:scal_fillrad}

Let $M$ be a complete Riemannian 3-manifold. In coherence with the definition of the fill radius, see Definition~\ref{de:metric_neighbourhood}, we introduce the following notation.
\begin{definition} \label{de:fillrad_relative}
	Let $K$ be a subset of a complete Riemannian 3-manifold $M$. Let $\gamma$ be a closed curve lying in $K$ and contractible in $M$. Define
	\begin{equation*}
		\fillrad{(\gamma \subset K)} := \sup\set{R \geq 0 \mid d(\gamma, \partial K) > R \text{ and } [\gamma] \neq 0 \in \pi_1(U(\gamma , R))}.
	\end{equation*}
	Recall that $U(\gamma,R)$ is the closed $R$-neighbourhood of $\gamma$ in $M$.
\end{definition}

With this notation, one can adapt the proof of Theorem \ref{th:Gromov_Lawson} in \cite[Proof of Theorem 10.7]{Gromov_Lawson_1983} to prove the following result.

\begin{proposition} \label{pr:corollary_GL}
	Let $M$ be a complete Riemannian 3-manifold with $\scal > 0$. Let~$K \subset M$ be a compact subset. Let $s_K > 0$ be a constant such that $\scal(x) \geq s_K > 0$ for every $x \in K$. Then, any closed curve $\gamma$ lying in $K$ and contractible in $M$ satisfies:
	\begin{equation*}
		\fillrad{(\gamma \subset K)} \leq \frac{2\pi}{\sqrt{s_K}}.
	\end{equation*}
\end{proposition}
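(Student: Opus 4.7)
The plan is to argue by contradiction along the lines of Gromov--Lawson's proof of Theorem \ref{th:Gromov_Lawson}, carefully localising the argument to the compact set $K$ on which the scalar curvature bound is assumed. Suppose toward a contradiction that $\fillrad(\gamma \subset K) > \tfrac{2\pi}{\sqrt{s_K}}$, and choose $R$ with $\tfrac{2\pi}{\sqrt{s_K}} < R < \fillrad(\gamma \subset K)$. By Definition~\ref{de:fillrad_relative}, such an $R$ satisfies $d(\gamma, \partial K) > R$ and $[\gamma] \neq 0 \in \pi_1(U(\gamma, R))$.

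The first step is a purely metric observation: $U(\gamma, R) \subset K$. Indeed, if a point $x \in U(\gamma, R)$ were not in $K$, then any almost-length-minimising path from $x$ to $\gamma$ in $M$ would have to cross $\partial K$, yielding a point of $\partial K$ lying within distance $R$ of $\gamma$ and contradicting $d(\gamma, \partial K) > R$. Consequently, the hypothesis $\scal \geq s_K$ is available throughout $U(\gamma, R)$.

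Next, since $\gamma$ is contractible in $M$, one solves the Plateau problem in $M$ to obtain a stable minimal disk $\Sigma$ with $\partial \Sigma = \gamma$. The heart of Gromov--Lawson's argument is an intrinsic estimate on $\Sigma$: combining the stability inequality with the Gauss equation $2K_\Sigma = \scal_M - 2\Ric(\nu,\nu) - \abs{\II}^2$ valid for minimal surfaces, together with Gauss--Bonnet on the intrinsic sublevel sets $\set{p \in \Sigma \mid d_\Sigma(p, \gamma) < r}$, one shows that whenever $\scal_M \geq s_K$ on the intrinsic $r$-neighbourhood of $\gamma$ in $\Sigma$, necessarily $r \leq \tfrac{2\pi}{\sqrt{s_K}}$. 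Since intrinsic distance on $\Sigma$ dominates ambient distance in $M$, that intrinsic $r$-neighbourhood is automatically contained in the extrinsic neighbourhood $U(\gamma, r) \subset U(\gamma, R) \subset K$, so the curvature hypothesis is available exactly where Gromov--Lawson's estimate calls for it, with no circularity. Applying the estimate forces $\Sigma \subset U(\gamma, \tfrac{2\pi}{\sqrt{s_K}}) \subset U(\gamma, R)$, exhibiting a disk contraction of $\gamma$ in $U(\gamma, R)$ and contradicting $[\gamma] \neq 0 \in \pi_1(U(\gamma, R))$.

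The main obstacle is to confirm that Gromov--Lawson's intrinsic estimate genuinely localises in this way. Concretely, one must check that each analytic ingredient of their proof---the second variation inequality, the Gauss equation, and Gauss--Bonnet applied to the sublevel sets of $d_\Sigma(\cdot, \gamma)$---only uses curvature information at points of $\Sigma$ belonging to the growing intrinsic tube around $\gamma$, and that the resulting distance estimate is not spoiled by the possible branch-point singularities of the Plateau solution. Once this localisation is in place, their original argument applies verbatim with $s_0$ replaced by $s_K$, yielding the claimed bound.
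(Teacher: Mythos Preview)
Your overall strategy---argue by contradiction, produce a stable minimal disc $\Sigma$ bounded by $\gamma$, and localise the Gromov--Lawson stability/Gauss--Bonnet argument to the portion of $\Sigma$ lying in $K$---is exactly the paper's approach. However, your description of the key estimate is inaccurate in a way that matters.

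The Gromov--Lawson argument does \emph{not} apply Gauss--Bonnet to the sublevel sets $\{d_\Sigma(\cdot,\gamma)<r\}$ and does \emph{not} conclude that $\Sigma \subset U(\gamma,2\pi/\sqrt{s_K})$. The obstruction is that the stability inequality requires the test function to vanish on $\partial\Sigma=\gamma$; a test function supported on the $r$-tube around $\gamma$ equals $1$ there, not $0$. The paper instead fixes $\rho>\pi/\sqrt{s_K}$ with $2\rho< \fillrad(\gamma\subset K)$, uses the hypothesis that $\gamma$ does not bound in $U(\gamma,2\rho)$ to select a component $\sigma$ of the level set $\{d_\Sigma(\cdot,\gamma)=\rho\}$ which itself fails to bound in $U(\gamma,2\rho)$, and takes the test function $f=\cos\bigl(\tfrac{\pi}{2\rho}\,d_\Sigma(\cdot,\Omega)\bigr)$ supported on the $\rho$-neighbourhood $\Omega(\rho)$ of $\sigma$ in $\Sigma$. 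This neighbourhood sits inside the intrinsic $2\rho$-tube about $\gamma$, hence inside $U(\gamma,2\rho)\subset K$, so $\scal\geq s_K$ holds on the support of $f$; the non-bounding of $\sigma$ is what guarantees $\chi(\Omega(s))\leq 0$ for all $s\in[0,\rho]$, which is essential for the Gauss--Bonnet step. The computation then forces $\rho<\pi/\sqrt{s_K}$, the desired contradiction. Note that $\Sigma$ itself may well leave $K$---no containment of $\Sigma$ is ever asserted.

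There is also a smaller gap you should address: solving Plateau in the complete non-compact $M$ is not automatic. The paper encloses $K$ together with some filling disc of $\gamma$ in a large ball $B$, deforms the metric near $\partial B$ (away from $K$) to a mean-convex product collar, and then applies Meeks--Yau inside $B$ to obtain the least-area stable disc.
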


\begin{proof}
	We argue by contradiction following \cite[Proof of Theorem 10.7]{Gromov_Lawson_1983}. The only difference here is that we will need to carefully apply the Stability Inequality \cite[Inequality 10.17]{Gromov_Lawson_1983} to functions with support in $K$. We also give more details of some parts of the argument.
	
	Let $\gamma$ be a closed curve lying in $K$ and contractible in $M$. Let $\rho > \pi/\sqrt{s_K}$. Suppose that $d(\gamma, \partial K) > 2\rho$ and that $\gamma$ does not bound a disc in $U(\gamma,2\rho)$ (in the proof, all the discs are non-necessarily embedded topological discs). Consider $B \subset M$ a closed metric ball whose interior contains the domain $K$ as well as a disc $D \subset M$ bounded by $\gamma$. After slightly deforming $B$, we may assume that the boundary $\partial B$ of $B$ is a smooth surface. Now, we modify the Riemannian metric in a tubular neighbourhood of $\partial B$ away from $K$ to make it isometric to the Riemannian product~	$\partial B \times [0,1]$. With the modified metric, the subset $B$ is a manifold with (mean) convex boundary. By~\cite{Meeks_Yau_1982}, there exists a disc $\Sigma \subset B$  bounded by $\gamma$ of least area. Observe that $\Sigma$ is a stable minimal disc which is not contained in $U(\gamma,2\rho)$.
	
	Now, consider the level set $\gamma_\rho := \set{x \in \Sigma \mid d_\Sigma(\gamma,x) = \rho}$ which, after considering a smooth aproximation of the function $d_\Sigma(\gamma,\cdot)$, consists of a disjoint collection of closed curves. Since $\gamma$ does not bound a disc in $U(\gamma,2\rho)$, there is at least one connected component $\sigma$ of $\gamma_\rho$ which does not bound a disc in $U(\gamma,2\rho)$. Let $\Omega \subset \Sigma$ be a small neighbourhood of $\sigma$. For every $s \in[0, \rho]$, denote by
	\begin{equation*}
		\Omega(s) := \set{x \in \Sigma \mid d_\Sigma(x,\Omega) \leq s}
	\end{equation*}
	the closed $s$-neighbourhood of $\Omega$ in $\Sigma$; see Figure \ref{fi:stability}.
	\begin{figure}[ht]
		\centering
    \def\svgwidth{0.6\columnwidth}
    \import{./Figures/}{stability.pdf_tex}

		\caption{The neighbourhood $\Omega(s)$ in the stable minimal disc $\Sigma$.}
		\label{fi:stability}
	\end{figure}
	
	Following \cite[Proof of Theorem 10.2]{Gromov_Lawson_1983}, by analytic approximation of the domain $\Omega$ and the distance function $d_\Sigma(\cdot, \Omega)$, we may assume that the level set $\partial \Omega(s)$ is piecewise smooth, for every~$s \in [0,\rho]$. Notice that, as $d(\gamma, \partial K) > 2\rho$, the set $\Omega(\rho)$ is contained in the compact $K$ and its interior does not intersect the curve $\gamma$.
	
	Using Schoen-Yau's rearrangement \cite{Schoen_Yau_1979_b}, the stability of the minimal surface $\Sigma$ implies
	\begin{equation} \label{eq:stability}
		\int_\Sigma \left(  \abs{\nabla f}^2+ \kappa f^2 -\frac{1}{2} \left(\scal +\norm{\II}^2\right) f^2 \right) dA \geq 0
	\end{equation}
	for any function $f \in \mathcal{C}_c^\infty(\Sigma)$, where $\kappa$ and $\II$ denote the Gauss curvature and the second fundamental form of $\Sigma$.
	Now consider the function $f: \Sigma \rightarrow \R$ defined by
	\begin{equation*}
		f(x) =\begin{cases}
					\cos{\left(\frac{\pi}{2\rho} d_\Sigma (x,\Omega)\right)} & \text{if }x \in \Omega(\rho) \\
					0 & \text{if } x \notin \Omega(\rho) \\
				\end{cases}.
	\end{equation*}
	Since $f$ is supported on $\Omega(\rho)$, which is contained in $K$, the Stability Inequality~\eqref{eq:stability} implies
	\begin{equation*}
		\int_{\Omega(\rho)} \left(  \abs{\nabla f}^2+ \kappa f^2 -\frac{s_K}{2} f^2 \right) dA \geq 0.
	\end{equation*}
	This is the equivalent of the inequality (10.17) in \cite{Gromov_Lawson_1983}, except that now the test function $f$ has support in $K$. Arguing exactly as in \cite[Proof of Theorem 10.2]{Gromov_Lawson_1983}, we conclude that $\rho < \pi / \sqrt{s_K}$, which is a contradiction.
\end{proof}

From Proposition \ref{pr:corollary_GL}, one can derive Proposition \ref{pr:generalised_condition}. More generally, we have the following result.

\begin{proposition} \label{pr:fillrad_estimate}
	Let $M$ be a complete Riemannian 3-manifold, and denote by $\tilde{M}$ its universal Riemannian cover. Suppose that $M$ has positive scalar curvature with a decay at infinity of rate $\alpha \geq 0$ and constant $C > 0$.
	\begin{enumerate}
		\item If the decay is subquadratic (that is, $\alpha \in[0,2)$), then the fill radius of $\tilde{M}$ has sublinear growth at infinity of rate $\beta = \alpha/2 \in [0,1)$.
		\item If the decay is at most $C$-quadratic with $C > 4\pi^2$, then the fill radius of $\tilde{M}$ has at most linear growth at infinity with constant
		\begin{equation*}
			c = \frac{2\pi}{\sqrt{C}-2\pi}.
		\end{equation*}
	\end{enumerate}
	In particular, if the scalar curvature has at most $C$-quadratic decay at infinity with $C > 64 \pi^2$, then the fill radius of $\tilde{M}$ has at most $c$-linear growth at infinity with $c < \frac{1}{3}$.
\end{proposition}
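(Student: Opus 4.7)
The plan is to apply Proposition~\ref{pr:corollary_GL} inside the universal cover $\tilde{M}$ to a closed lift $\tilde{\gamma}$ of an arbitrary closed curve $\gamma \subset B(x,R)$ contractible in $M$. Since $\gamma$ is contractible in $M$, the lift $\tilde{\gamma}$ is indeed closed in the simply connected manifold $\tilde{M}$. Set $R^* := \fillrad{(\tilde{\gamma})}$. For any $\rho > R^*$, the closed $\rho$-neighbourhood $K := U(\tilde{\gamma}, \rho)$ is compact in $\tilde{M}$ by Hopf--Rinow, and $d(\tilde{\gamma}, \partial K) = \rho > R^*$. Since $\partial \tilde{M} = \emptyset$, Definition~\ref{de:fillrad_relative} then gives $\fillrad{(\tilde{\gamma} \subset K)} = R^*$.

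To bound $s_K := \min_K \scal$ from below, we use that the covering projection $\pi \colon \tilde{M} \to M$ is a local isometry, hence $1$-Lipschitz: every $\tilde{y} \in K$ satisfies $d(\pi(\tilde{y}), \gamma) \leq \rho$, and since $\gamma \subset B(x,R)$, also $r_x(\pi(\tilde{y})) \leq R + \rho$. As $\scal$ is continuous and positive, it admits a uniform positive lower bound $s_0$ on the fixed compact ball $B(x, R_0)$. Choosing the threshold $R'_0$ large enough that $C/(R + \rho)^\alpha \leq s_0$ for all $R \geq R'_0$ and all $\rho$ close to $R^*$, the decay hypothesis ensures $s_K \geq C/(R + \rho)^\alpha$ on the whole of $K$. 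Proposition~\ref{pr:corollary_GL} applied to $\tilde{\gamma} \subset K$ in $\tilde{M}$, followed by letting $\rho \downarrow R^*$, then produces the key implicit estimate
\begin{equation*}
R^* \,\leq\, \frac{2\pi}{\sqrt{s_K}} \,\leq\, \frac{2\pi}{\sqrt{C}} (R + R^*)^{\alpha/2}.
\end{equation*}

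It remains to extract the explicit growth rates from this implicit bound. In the quadratic case $\alpha = 2$ with $C > 4\pi^2$, it rearranges linearly to $R^* \leq 2\pi R/(\sqrt{C} - 2\pi) = cR$, as claimed. In the subquadratic case $\alpha \in [0, 2)$, a simple dichotomy suffices: either $R^* \leq R$, so that $R + R^* \leq 2R$ and $R^* \leq (2^{\alpha/2} \cdot 2\pi/\sqrt{C}) \, R^{\alpha/2}$; or $R^* > R$, so that $R + R^* \leq 2R^*$ forces $(R^*)^{1 - \alpha/2} \leq 2^{\alpha/2} \cdot 2\pi/\sqrt{C}$, bounding $R^*$ by a universal constant that is absorbed into any positive multiple of $R^{\alpha/2}$ for $R$ large. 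Either way, the growth is at rate $\beta = \alpha/2$. The final assertion is the arithmetic check $C > 64\pi^2 \Rightarrow \sqrt{C} > 8\pi \Rightarrow c = 2\pi/(\sqrt{C} - 2\pi) < 2\pi / 6\pi = 1/3$.

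The main technical nuisance is that portions of $\pi(K)$ may lie inside $B(x, R_0)$, where only qualitative positivity of $\scal$ is granted by the hypothesis; this is absorbed by enlarging $R'_0$ so that the quantitative decay bound $C/(R+\rho)^\alpha$ dominates the qualitative bound $s_0$ on $B(x, R_0)$. The strict inequality $\fillrad{(\tilde{\gamma})} < c R^\beta$ demanded by the definition is then obtained either by exploiting the strict inequality in the scalar decay assumption (so that $s_K$ strictly exceeds $C/(R+\rho)^\alpha$), or by a harmless infinitesimal enlargement of the constant $c$.
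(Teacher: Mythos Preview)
Your proof is correct and reaches exactly the same constants, but the route differs from the paper's in the choice of the compact set $K$ to which Proposition~\ref{pr:corollary_GL} is applied. The paper takes $K = p^{-1}(B(x,\mu R)) = U(\pi_1(M)\cdot\tilde{x},\mu R)$ for an auxiliary parameter $\mu>1$, obtains $\fillrad(\tilde\gamma\subset K) < \tfrac{2\pi}{\sqrt C}(\mu R)^{\alpha/2}$ together with $d(\tilde\gamma,\partial K)\geq (\mu-1)R$, and then optimises over $\mu$; in the quadratic case the optimal choice $\mu=\sqrt C/(\sqrt C-2\pi)$ yields the stated constant $c$. You instead take $K=U(\tilde\gamma,\rho)$, let $\rho\downarrow R^*=\fillrad(\tilde\gamma)$, and solve the resulting implicit inequality $R^*\leq \tfrac{2\pi}{\sqrt C}(R+R^*)^{\alpha/2}$ directly. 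Your choice has the minor advantage that $K$ is genuinely compact (as Proposition~\ref{pr:corollary_GL} literally requires), whereas the paper's $K$ is non-compact when $\pi_1(M)$ is infinite---though only the scalar-curvature lower bound on $K$ is actually used, so this is harmless. Your dichotomy in the subquadratic case is also a bit more direct than the paper's explicit determination of $R'_0$ in terms of $\mu$. In short: same engine, slightly different packaging.
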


\begin{proof}
	Fix a point $x \in M$. Consider $R_0 > 0$ such that every point $y \in M$ with $r_x(y) \geq R_0$ satisfies
	\begin{equation*}
		\scal(y) > \frac{C}{r_x(y)^\alpha},
	\end{equation*}
	and define $s_0 := \min_{B(x,R_0)} \scal$. Fix $\mu > 1$. Let $R \geq R'_0$, where
	\begin{equation*}
		R'_0 =\begin{cases}
					\max\set{R_0, \left(\frac{C}{s_0}\right)^{\frac{1}{\alpha}}, \left(\frac{2\pi}{\sqrt{C}}\right)^{\frac{2}{2-\alpha}}\left(\frac{\mu^\alpha}{(\mu-1)^2}\right)^{\frac{1}{2-\alpha}}} & \text{if } \alpha \in [0,2) \\
					\max\set{R_0 , \sqrt{\frac{C}{s_0}}} & \text{if } \alpha = 2 \\
				\end{cases}.
	\end{equation*}
	Note that $R \geq \max\set{R_0 , (\frac{C}{s_0})^\frac{1}{\alpha}}$ in both cases.
	
	Let $\gamma$ be a closed curve lying in the closed metric ball $B(x,R)$ and contractible in $M$. Take any lift $\tilde{x}$ of $x$ in the universal cover $\tilde{M}$, and lift $\gamma$ to a closed curve $\tilde{\gamma}$ lying in the set~$p^{-1}(B(x,R)) = U(\pi_1 (M) \cdot \tilde{x}, R)$ formed by the points of $\tilde{M}$ at distance at most $R$ from a point in the orbit of $\tilde{x}$ by the action of the fundamental group of $M$; see Definition \ref{de:metric_neighbourhood} (\ref{eq:metric_neighbourhood}).
	
	Consider the $\mu R$-neighbourhood $K = U(\pi_1 (M) \cdot \tilde{x}, \mu R)$ of the orbit $\pi_1 (M) \cdot \tilde{x}$. Since~$R \geq (\frac{C}{s_0})^\frac{1}{\alpha}$, we have
	\begin{equation*}
		\min_{K} \scal > \frac{C}{(\mu R)^\alpha}.
	\end{equation*}
	By Proposition \ref{pr:corollary_GL}, the closed curve $\tilde{\gamma}$ has
	\begin{equation*}
		\fillrad{(\tilde{\gamma} \subset K)} < \frac{2\pi}{\sqrt{C}} (\mu R)^{\frac{\alpha}{2}}.
	\end{equation*}
	Notice that
	\begin{equation*}
		d(\tilde{\gamma}, \partial K) \geq \mu R - R.
	\end{equation*}
	Hence, the curve $\tilde{\gamma}$ bounds a disc in its $\frac{2\pi}{\sqrt{C}} (\mu R)^{\frac{\alpha}{2}}$-neighbourhood if
	\begin{equation} \label{eq:distance_inequality}
		\mu R - R \geq \frac{2\pi}{\sqrt{C}} (\mu R)^{\frac{\alpha}{2}}.
	\end{equation}
	
	\begin{enumerate}
		\item Let us consider first the subquadratic case, that is, suppose $\alpha \in [0,2)$. The inequality
		\begin{equation*}
			R \geq \left(\frac{2\pi}{\sqrt{C}}\right)^{\frac{2}{2-\alpha}}\left(\frac{\mu^\alpha}{(\mu-1)^2}\right)^{\frac{1}{2-\alpha}}
		\end{equation*}
		is equivalent to the inequality (\ref{eq:distance_inequality}). Therefore,
		\begin{equation*}
			\fillrad{(\tilde{\gamma})} < \frac{2\pi}{\sqrt{C}} \mu^{\frac{\alpha}{2}} R^{\frac{\alpha}{2}}.
		\end{equation*}
		That is, the fill radius of $\tilde{M}$ has sublinear growth of rate $\beta = \alpha / 2$ and constant $c = 2\pi \mu^{\frac{\alpha}{2}} / \sqrt{C}$. Notice that this estimate holds for any $\mu > 1$. In particular, one can let $\mu$ go to 1 in order to make $c$ as close to $2\pi / \sqrt{C}$ as desired, in which case $R'_0$ diverges to infinity.
		
		\item Now, suppose that $\alpha = 2$ and $C > 4\pi^2$. In this case, the inequality~(\ref{eq:distance_inequality}) is equivalent to
		\begin{equation*}
			\mu \geq \frac{\sqrt{C}}{\sqrt{C}-2\pi}.
		\end{equation*}
		Hence, for any $\mu \geq \frac{\sqrt{C}}{\sqrt{C}-2\pi}$, we have
		\begin{equation*}
			\fillrad{(\tilde{\gamma})} < \frac{2\pi}{\sqrt{C}} \mu R.
		\end{equation*}
		In particular,
		\begin{equation*}
			\fillrad{(\tilde{\gamma})} < \frac{2\pi}{\sqrt{C}-2\pi} R.
		\end{equation*}
		That is, the fill radius of $\tilde{M}$ has at most $c$-linear growth, with $c = \frac{2\pi}{\sqrt{C}-2\pi}$. Notice that $C > 64 \pi^2$ if and only if $c < \frac{1}{3}$.
	\end{enumerate}
	\end{proof}

\section{Fill radius and simply connectedness at infinity on contractible curves} \label{se:simply_connected_infinity}

Recall the well-known notion of manifold simply connected at infinity (see \cite[Section 16]{Geoghegan_2008} for a detailed discussion on simply connectedness at infinity and related notions).
	\begin{definition}
		A manifold $M$ is \emph{simply connected at infinity} if for any compact subset $B \subset M$, there is a compact subset $K \subset M$ containing $B$ such that the morphism
		\begin{equation*}
			\pi_1(M-K) \rightarrow \pi_1(M-B)
		\end{equation*}
		induced by the inclusion is trivial.
	\end{definition}
	Intuitively, a manifold is simply connected at infinity if for any compact subset $B$, closed curves sufficiently far from $B$ can be contracted to a point avoiding $B$. Notice that simply connected manifolds are not necessarily simply connected at infinity. For instance, the plane in dimension two and the Whitehead manifold are simply connected (even contractible) but they are not simply connected at infinity. If instead we require this condition to hold only for curves that are contractible in $M$, we say that $M$ is simply connected at infinity on contractible curves. More precisely:
	\begin{definition}
		A manifold $M$ is \emph{simply connected at infinity on contractible curves} if for any compact subset $B \subset M$, there is a compact $K \subset M$ containing $B$ such that
		\begin{equation*}
			\ker{(\pi_1(M-K) \rightarrow \pi_1(M))} = \ker{(\pi_1(M-K) \rightarrow \pi_1(M-B))},
		\end{equation*}
		that is, every closed curve $\gamma \subset M - K$ contractible in $M$ is already contractible in $M-B$.
	\end{definition}
	
	\begin{example} \label{ex:simplyconnectedinfinity_contractiblecurves}
		Clearly, simply connectedness at infinity implies simply connectedness at infinity on contractible curves. The converse is not true, as shown by the following example. Consider the 3-manifold $M$ obtained as an infinite connected sum (see Section \ref{se:prime_decomposition}) of manifolds $\Sp^2 \times \Sp^1$ modelled on the halfline $[0,+\infty)$ with vertices at the integer points. The manifold $M$ is simply connected at infinity on contractible curves. However, $M$ is not simply connected at infinity, since the complementary of any compact contains (infinitely many) non-contractible curves.
	\end{example}
	
	In this section, we prove that complete Riemannian manifolds whose universal Riemannian cover has fill radius with at most $c$-linear growth at infinity with $c < 1$ are simply connected at infinity on contractible curves. Notice that this result is valid for manifolds of any dimension.
	
	\begin{proposition} \label{pr:fillrad_simplyconnectedatinfinity}
		Let $M$ be a complete Riemannian manifold, and denote by $\tilde{M}$ its universal Riemannian cover. Suppose the fill radius of $\tilde{M}$ has at most $c$-linear growth at infinity for some $c < 1$. Then the manifold $M$ is simply connected at infinity on contractible curves.
	\end{proposition}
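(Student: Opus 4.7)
The plan is to take $K$ to be a sufficiently large closed metric ball centered at $x$ and to correct the filling disc provided by the hypothesis via surgery on a distance sphere around $x$. Given a compact $B \subset M$, pick $R_B > 0$ with $B \subset \intt B(x, R_B)$, enlarging $R_B$ if necessary so that $r := R_B/(1-c) \geq R'_0$, where $R'_0$ is the threshold from the linear growth hypothesis. Fix $R_K > \max\set{r, R'_0}$ and set $K := B(x, R_K)$.

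Let $\gamma \subset M - K$ be a closed curve contractible in $M$, and set $R_\gamma := \max_{p \in \gamma} d(p, x) \geq R_K$. Since $R_\gamma \geq R'_0$, applying the hypothesis to $\gamma$ yields a singular disc $D \colon \D^2 \to M$ bounding $\gamma$ whose image lies in $U(\gamma, cR_\gamma)$. After a small smooth perturbation, $D$ may be assumed transverse to the distance sphere $\Sigma_r := \set{y \in M : d(y, x) = r}$; since $\gamma$ lies in $M - \overline{B(x, r)}$, the preimage $D^{-1}(\Sigma_r)$ is a disjoint union of embedded circles $C_1, \dots, C_m \subset \intt(\D^2)$. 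Each image $\sigma_i := D(C_i) \subset \Sigma_r$ is contractible in $M$ (it bounds a singular sub-disc of $D$) and lies in $B(x, r)$ with $r \geq R'_0$, so a second application of the hypothesis provides a singular disc $D_i \colon \D^2 \to M$ bounding $\sigma_i$ with image in $U(\sigma_i, cr)$.

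Here the strict inequality $c < 1$ is crucial: by the reverse triangle inequality,
\begin{equation*}
    U(\sigma_i, cr) \subset \set{y \in M : d(y, x) \geq r - cr} = \set{y \in M : d(y, x) \geq R_B},
\end{equation*}
so each $D_i$ misses $\intt B(x, R_B) \supset B$. Letting $\Sigma \subset \D^2$ denote the closure of $D^{-1}(M - \overline{B(x, r)})$, the restriction $D\rvert_\Sigma$ maps into $M - B$, and $\Sigma$ is a planar subsurface of $\D^2$ with outer boundary $\partial \D^2$ (mapped to $\gamma$) and inner boundaries $C_1, \dots, C_m$ (mapped to the $\sigma_i$). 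Gluing in the discs $D_i$ along the $\sigma_i$ caps off each inner boundary, yielding a planar compact surface with a single boundary component $\partial \D^2$, namely a disc, together with a continuous map $D' \colon \D^2 \to M - B$ satisfying $D'\rvert_{\partial \D^2} = \gamma$. Thus $\gamma$ is null-homotopic in $M - B$, as required.

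The main technical point is achieving transversality of $D$ to $\Sigma_r$: the disc $D$ provided abstractly by the fill radius hypothesis need not be smooth. This is addressed by standard smooth approximation of both $D$ and the distance function $d(\cdot, x)$, which does not affect the conclusions since each of the strict containments above holds with a positive margin.
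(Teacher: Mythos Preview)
Your argument is correct and follows essentially the same level-set surgery as the paper's proof. Three minor points are worth tightening. First, the hypothesis bounds $\fillrad(\tilde\gamma)$ in $\tilde M$, not $\fillrad(\gamma)$ in $M$; you should remark (as the paper does) that the covering projection is distance-nonincreasing, so a filling disc for $\tilde\gamma$ in $U(\tilde\gamma,cR)\subset\tilde M$ projects to one for $\gamma$ in $U(\gamma,cR)\subset M$. Second, your first application of the hypothesis to $\gamma$ is unnecessary: any singular disc bounding $\gamma$ suffices before you cut at $\Sigma_r$, and indeed the paper simply takes an arbitrary one. Third, capping off \emph{every} $C_i$ may produce stray spherical components when the circles are nested (e.g.\ two concentric $C_1\supset C_2$ with $\Sigma$ equal to the outer annulus together with the inner disc), so the result is not literally a single disc; the paper avoids this by capping only the outermost discs $\D_j^+$, though in your version the component of the capped surface containing $\partial\D^2$ already gives the null-homotopy you need.
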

	
	\begin{proof}
		Fix a point $x \in M$. Consider $c < 1$ and $R'_0 > 0$ such that if $\gamma$ is a contractible closed  curve lying in the closed metric ball $B(x,R)$ for $R \geq R'_0$, then any of its lifts $\tilde{\gamma}$ to the universal cover $\tilde{M}$ satisfies
		\begin{equation*}
			\fillrad{(\tilde{\gamma})} < cR.
		\end{equation*}
		Notice that, since the projection from the universal cover to $M$ is distance non-increasing, the curve~$\gamma$ also satisfies $\fillrad{(\gamma)} < c R$.
		
		For any compact $B \subset M$, choose an $r \geq R'_0$ such that $B \subset B(x,r)$. Let $R := \frac{r}{1-c} > r$ and consider the closed metric ball $K = B(x,R)$. Suppose that $\eta \subset M-K$ is a closed curve that is contractible in~$M$. If $\eta$ bounds a disc in $M-K$, there is nothing to prove, so suppose that~$\eta$ bounds an immersed disc $D$ which intersects $K$. That is, there exists an immersion $\rho: \D \rightarrow D \subset M$ from the unit Euclidean disc $\D$ whose restriction to $\partial \D$ coincides with $\eta$. Approximate the distance function~$d(x,\cdot)$ on the manifold $M$ by a smooth function, and denote by $f$ its restriction to the disc~$D$. 
		By Sard's theorem, we can slightly adjust $R$ to a regular value of $f \circ \rho$ so that the preimage $(f \circ \rho)^{-1}(R)$ is a finite collection of disjoint simple closed curves, each of which bounds a topological disc $\D_i \subset \D$. Note that two such discs are either disjoint or one is contained within the other. It follows that the maximal discs $\D_j^+$ in this family are disjoint. Note also that $\rho(\D - \sqcup_j \D_j^+) \subset M-K$. The images~$\gamma_j$ of~$\partial \D_j^+$ under $\rho$ are contractible closed curves in $D$, and hence in $M$. Since each curve $\gamma_j$ lies in the metric ball $B(x,R)$, we have
		\begin{equation*}
			\fillrad{(\gamma_j)} < cR = \frac{c}{1-c} r.
		\end{equation*}
		The distance of each curve $\gamma_j$ to the boundary $\partial B(x,r)$ satisfies
		\begin{equation*}
			d(\gamma_j, \partial B(x,r)) \geq R - r \geq \frac{c}{1-c}r.
		\end{equation*}
		Therefore, each curve $\gamma_j$ bounds a disc inside the complement $M - B(x,r)$. That is, for each closed curve $\gamma_j$, there exists an immersion $\rho_j : \D_j^+ \rightarrow M - B(x,r)$ whose restriction to $\partial \D_j^+$ coincides with~$\gamma_j$. By combining the maps $\rho: \D - \sqcup_j \D_j^+ \rightarrow M-K$ and $\rho_j : \D_j^+  \rightarrow M - B(x,r)$, we obtain a map $\D \rightarrow M - B(x,r)$ whose restriction to $\partial \D$ coincides with $\eta$. Thus, the curve $\eta$ bounds a disc inside $M - B(x,r) \subset M - B$.
	\end{proof}
	
	\begin{remark}
		A result closely related to Proposition \ref{pr:fillrad_simplyconnectedatinfinity} is \cite[Corollary 10.9]{Gromov_Lawson_1983}, where the authors proved that a complete 3-manifold of uniformly positive scalar curvature and finitely generated fundamental group is simply connected at infinity. Note that this result fails without the assumption that the fundamental group is finitely generated; see Example \ref{ex:simplyconnectedinfinity_contractiblecurves} for a counterexample. The hypothesis in Proposition \ref{pr:fillrad_simplyconnectedatinfinity} are more general (in particular, we do not make any assumption on the dimension of $M$, nor on the fundamental group of~$M$), but we derive a weaker result, namely simply connectedness at infinity on contractible curves.
	\end{remark}
	
	\begin{remark}
		By Proposition \ref{pr:fillrad_estimate}, Proposition \ref{pr:fillrad_simplyconnectedatinfinity} applies to complete Riemannian 3-manifolds of positive scalar curvature with at most $C$-quadratic decay at infinity with $C > 16\pi^2$. Although we shall not make use of this fact, one can show directly this consequence by constructing a minimising annulus in a compact domain $K \subset M$ where the scalar curvature is bounded from below by some~$s_K > 0$, without relying on the fill radius, as in the proof of Proposition \ref{pr:corollary_GL}. This alternative approach applies when $C > \pi^2$ and was first used in \cite[Proof of Corollary 10.9]{Gromov_Lawson_1983}. Notice that for such manifolds, the fill radius of their universal cover has at most $c$-linear growth at infinity, for any $c > 0$.
	\end{remark}
	
	The following result is a direct consequence of the application of Proposition \ref{pr:fillrad_simplyconnectedatinfinity} to contractible manifolds.
	
	\begin{corollary}
		Let $M$ be a complete contractible Riemannian manifold. Suppose the fill radius of~$M$ has at most $c$-linear growth at infinity for some $c<1$. Then $M$ is homeomorphic to $\R^n$.
	\end{corollary}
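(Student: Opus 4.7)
The plan is to combine Proposition \ref{pr:fillrad_simplyconnectedatinfinity} with the classical topological characterization of $\R^n$ as the unique contractible open $n$-manifold that is simply connected at infinity. Essentially all of the geometric work has already been done in the preceding proposition; what remains is a short topological reduction followed by a citation.

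First I would verify that Proposition \ref{pr:fillrad_simplyconnectedatinfinity} applies to $M$ itself. Since $M$ is contractible, it is simply connected, so its universal Riemannian cover $\tilde{M}$ coincides with $M$, and the assumption on the fill radius of $M$ is exactly the assumption of the proposition. The conclusion is that $M$ is simply connected at infinity on contractible curves. Because every closed curve in $M$ is contractible in $M$, this is equivalent to $M$ being simply connected at infinity in the classical sense: for every compact $B \subset M$ there is a larger compact $K$ such that every loop in $M - K$ bounds a disc in $M - B$.

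Second, I would invoke the topological characterization of Euclidean space: a contractible open $n$-manifold that is simply connected at infinity is homeomorphic to $\R^n$. In dimensions $n \leq 2$ this is elementary, since the only simply connected non-compact surface is $\R^2$ and the $1$-dimensional case is trivial. In dimension $n = 3$ it follows from the resolution of the Poincaré conjecture by Perelman. In dimension $n = 4$ it is a theorem of Freedman, and in dimensions $n \geq 5$ it is Stallings' theorem. Combining these two steps gives $M \cong \R^n$. The only real obstacle here is bibliographic, namely to correctly cite the dimension-by-dimension status of this Euclidean recognition theorem; no further geometric or analytic input is required, since the fill radius hypothesis has been entirely absorbed by Proposition \ref{pr:fillrad_simplyconnectedatinfinity}.
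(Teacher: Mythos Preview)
Your proposal is correct and follows essentially the same approach as the paper: apply Proposition~\ref{pr:fillrad_simplyconnectedatinfinity} (noting $\tilde M = M$ since $M$ is contractible, so that simply connected at infinity on contractible curves reduces to simply connected at infinity), then invoke the dimension-by-dimension characterization of $\R^n$. The only minor difference is bibliographic: for $n=3$ the paper cites Edwards (1963) rather than Perelman, since the recognition of $\R^3$ among contractible open 3-manifolds simply connected at infinity does not require the full Poincar\'e conjecture.
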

	
	\begin{proof}
		Proposition \ref{pr:fillrad_simplyconnectedatinfinity} together with the fact that the manifold $M$ is contractible imply that $M$ is simply connected at infinity. And the only contractible manifold which is simply connected at infinity is $\R^n$. This result was proven first for manifolds of dimension $n \geq 5$ \cite[Theorem 4]{Stallings_1962}, then for 3-dimensional manifolds \cite{Edwards_1963}, and finally for 4-dimensional manifolds \cite[Corollary 1.2]{Freedman_1982}.
	\end{proof}
	
	In particular, any contractible complete 3-manifold with positive scalar curvature of subquadratic decay at infinity, and more generally of $C$-quadratic decay at infinity for some $C > 16\pi^2$, is homeomorphic to $\R^3$. This result was already obtained by Wang \cite{Wang_2019}. It is an open question whether any contractible complete 3-manifold with positive scalar curvature is homeomorphic to~$\R^3$, despite some recent progress in this direction \cite{Wang_2023_Review,Wang_2024_JDG,Wang_2024_JEMS,Chodosh_Lai_Xu_2025}.

\section{Localised compression and exhaustion by compact domains with incompressible boundaries} \label{se:localised_compression}

Recall the following well-known definition.

\begin{definition}
	Let $\Sigma$ be a closed orientable (not necessarily connected) surface embedded into an orientable 3-manifold $M$. The surface $\Sigma$ is \emph{incompressible} if the morphism $\pi_1(\Sigma,x) \rightarrow \pi_1(M,x)$ induced by inclusion is injective for any basepoint $x \in \Sigma$. Otherwise, the surface $\Sigma$ is \emph{compressible}.
\end{definition}

A compressible surface $\Sigma$ can be compressed into an (embedded) incompressible surface homologous to $\Sigma$ as follows. This procedure is based on the Loop Theorem, proved by Papakyriakopoulos \cite{Papakyriakopoulos_1957}. Here, we shall use the following version of the Loop Theorem \cite[Chapter 4, Theorem 4.2]{Hempel_1976}.

\begin{theorem}[Loop Theorem] \label{th:looptheorem}
	Let $\Sigma$ be a closed orientable surface embedded into an orientable 3-manifold $M$. If, for a certain basepoint $x \in \Sigma$, the homomorphism induced by the inclusion $\pi_1(\Sigma,x) \rightarrow \pi_1(M,x)$ is not injective, then there is a simple closed curve $\gamma$ of $\Sigma$ representing a nontrivial element of $\ker{(\pi_1(\Sigma,x) \rightarrow \pi_1(M,x))}$ and an embedded disc $D \subset M$ such that $\gamma = \partial D = D \cap \Sigma$.
\end{theorem}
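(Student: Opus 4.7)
The plan is to prove Theorem \ref{th:looptheorem} via Papakyriakopoulos' tower construction, following the classical strategy of \cite{Papakyriakopoulos_1957} reproduced in \cite[Chapter 4]{Hempel_1976}. The input is a singular disc in $M$ spanning an essential loop on $\Sigma$ representing a kernel element; the output should be an embedded such disc, obtained by systematically lifting to covers and descending while removing self-intersections. Concretely, since $\ker(\pi_1(\Sigma,x) \to \pi_1(M,x))$ is nontrivial, a loop $\alpha$ based at $x$ in $\Sigma$ represents a nontrivial element of this kernel and bounds a PL map $f_0 \colon (D,\partial D) \to (M,\Sigma)$ in general position; let $N_0$ denote a regular neighborhood of $f_0(D) \cup \alpha$ in $M$.

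Next, I would build the Papakyriakopoulos tower. If $\pi_1(N_0) \neq 1$, choose a surjection $\pi_1(N_0) \twoheadrightarrow \Z/2$, take the associated connected double cover $p_1 \colon N_1 \to N_0$, and lift $f_0$ (possible because $D$ is simply connected) to $f_1 \colon D \to N_1$. Replace $N_1$ by a regular neighborhood of $f_1(D)$ and iterate. A standard complexity argument based on counting triple points and double curves of the $f_i$ shows that the tower terminates at some level $N_k$ with $\pi_1(N_k) = 1$. In this simply connected 3-manifold, innermost-disc and innermost-arc surgeries on the double locus of $f_k$ produce an embedded disc $E_k \subset N_k$ whose boundary is a simple closed curve in the preimage of $\Sigma$ representing a nontrivial element of the corresponding kernel.

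Then, I would descend the embedded disc through the tower. Given $E_{i+1} \subset N_{i+1}$, the image $p_{i+1}(E_{i+1})$ has self-intersections only along arcs and simple closed curves where $E_{i+1}$ meets its translate $\tau(E_{i+1})$ by the deck transformation $\tau$ of the double cover $p_{i+1}$. An innermost-disc cut-and-paste on this double locus replaces the pair of sub-discs meeting along an innermost arc by two disjoint embedded discs, producing an embedded $E_i \subset N_i$ whose boundary still projects to a simple closed curve representing a nontrivial kernel element. Iterating down to $N_0 \subset M$ yields the desired pair $(\gamma, D)$, with $\gamma = \partial D \subset \Sigma$ and $D \cap \Sigma = \gamma$.

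The main obstacle is controlling the descent: the cut-and-paste operation along the double locus must simultaneously preserve embeddedness of the new disc, the property that its boundary is a single simple closed curve in $\Sigma$, and nontriviality in $\ker(\pi_1(\Sigma,x) \to \pi_1(M,x))$. A priori, an innermost surgery could split $\partial E_i$ into several curves or trivialize the boundary class, and ruling out these possibilities --- using the careful choice of innermost arc and the fact that the kernel class is detected already at the bottom of the tower --- is precisely the technical heart of Papakyriakopoulos' original argument.
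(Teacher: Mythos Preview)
The paper does not prove Theorem~\ref{th:looptheorem}; it merely quotes the Loop Theorem as a classical result, citing \cite[Chapter~4, Theorem~4.2]{Hempel_1976} and the original paper of Papakyriakopoulos \cite{Papakyriakopoulos_1957}. Your outline follows exactly the tower construction presented in those references, so in that sense your approach coincides with what the paper relies on.

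As a sketch of the classical proof, your proposal is accurate in its overall architecture (singular disc, tower of double covers, termination, embedded disc at the top, descent with cut-and-paste), and you correctly identify the genuine technical crux: ensuring during descent that the boundary remains a single simple closed curve representing a nontrivial kernel element. One minor imprecision is the termination argument: the tower terminates not because of a count of triple points and double curves per se, but because at each nontrivial double-cover step the complexity of the singular set of the lifted map strictly decreases (or, in some presentations, because $H_1(N_i;\Z/2)$ must eventually vanish given the bounded topology of regular neighborhoods). Also, at the top of the tower one does not directly appeal to simple connectivity of $N_k$ to embed the disc; rather, one uses that the map $f_k$ has become an embedding (its singular set is empty), or that $N_k$ is a handlebody in which the desired embedded disc can be found by elementary means. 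These are refinements rather than gaps, and the full details are exactly what Hempel supplies.
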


The compression of a compressible surface $\Sigma$ is carried out as follows. By the Loop Theorem~\ref{th:looptheorem}, take a simple closed curve $\gamma$ in $\Sigma$ representing a nontrivial element of $\ker{(\pi_1(\Sigma,x) \rightarrow \pi_1(M,x))}$ which bounds an embedded disc $D \subset M$ intersecting $\Sigma$ only along its boundary. There is a diffeomorphism onto its image $\varphi: D \times [-1,1] \rightarrow M$ such that $\varphi(\cdot,0) = id_D$ and $\varphi(D \times [-1,1]) \cap \Sigma = \varphi(\partial D \times [-1,1])$. Then compress $\Sigma$ along the disc $D$ as follows. Remove the band $\varphi(\partial D \times [-1,1])$ and glue two discs $D^\pm := \varphi(D \times \set{\pm 1})$ to $\Sigma$ along the corresponding curve $\varphi(\partial D \times \set{\pm 1})$; see Figure~\ref{fi:compression}. Notice that $\varphi(D \times [-1,1])$ is a 1-handle in $M$, whose boundary corresponds to $\varphi(\partial D \times [-1,1]) \cup D^\pm$. Therefore, the compression, which consists in substituting $\varphi(\partial D \times [-1,1])$ by $D^\pm$, yields a new surface $\Sigma'$ homologous to $\Sigma$ in $M$.
\begin{figure}[ht]
	\centering
    \def\svgwidth{1\columnwidth}
\begingroup%
  \makeatletter%
  \providecommand\color[2][]{%
    \errmessage{(Inkscape) Color is used for the text in Inkscape, but the package 'color.sty' is not loaded}%
    \renewcommand\color[2][]{}%
  }%
  \providecommand\transparent[1]{%
    \errmessage{(Inkscape) Transparency is used (non-zero) for the text in Inkscape, but the package 'transparent.sty' is not loaded}%
    \renewcommand\transparent[1]{}%
  }%
  \providecommand\rotatebox[2]{#2}%
  \newcommand*\fsize{\dimexpr\f@size pt\relax}%
  \newcommand*\lineheight[1]{\fontsize{\fsize}{#1\fsize}\selectfont}%
  \ifx\svgwidth\undefined%
    \setlength{\unitlength}{572.5984252bp}%
    \ifx\svgscale\undefined%
      \relax%
    \else%
      \setlength{\unitlength}{\unitlength * \real{\svgscale}}%
    \fi%
  \else%
    \setlength{\unitlength}{\svgwidth}%
  \fi%
  \global\let\svgwidth\undefined%
  \global\let\svgscale\undefined%
  \makeatother%
  \begin{picture}(1,0.2029703)%
    \lineheight{1}%
    \setlength\tabcolsep{0pt}%
    \put(0,0){\includegraphics[width=\unitlength,page=1]{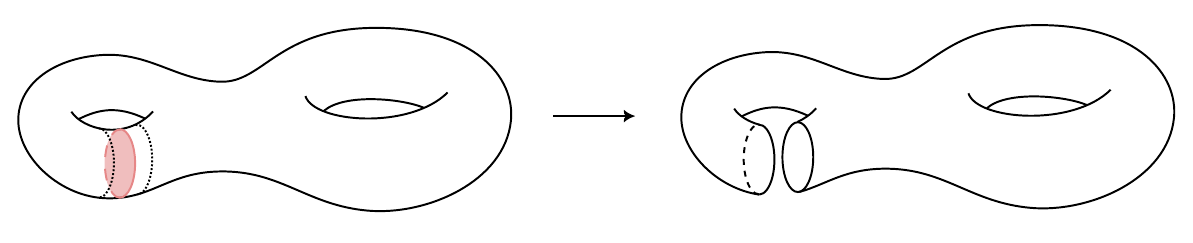}}%
    \put(0.11087653,0.01003014){\color[rgb]{0.90196078,0.52941176,0.52941176}\makebox(0,0)[lt]{\lineheight{1.25}\smash{\begin{tabular}[t]{l}$\gamma$\end{tabular}}}}%
    \put(0.05402942,0.06698752){\color[rgb]{0.90196078,0.52941176,0.52941176}\makebox(0,0)[lt]{\lineheight{1.25}\smash{\begin{tabular}[t]{l}$D$\end{tabular}}}}%
    \put(0.38023909,0.18718287){\color[rgb]{0,0,0}\makebox(0,0)[lt]{\lineheight{1.25}\smash{\begin{tabular}[t]{l}$\Sigma$\end{tabular}}}}%
    \put(0.94752623,0.18831466){\color[rgb]{0,0,0}\makebox(0,0)[lt]{\lineheight{1.25}\smash{\begin{tabular}[t]{l}$\Sigma'$\end{tabular}}}}%
  \end{picture}%
\endgroup%

	\caption{Compressing a surface.}
	\label{fi:compression}
\end{figure}

The compression of $\Sigma$ along a disc reduces its complexity. Namely, if $\gamma$ is not separating in $\Sigma$, then the compression simply reduces the genus of the compressed connected component of $\Sigma$ by one. On the other hand, if $\gamma$ is separating in $\Sigma$, then the compression splits the corresponding connected component into two new connected components. Each of them has strictly lower genus than the compressed connected component of $\Sigma$. Hence, by iterating the procedure finitely many times, we finally obtain an incompressible surface $\Sigma'$.\medskip

When the ambient manifold is simply connected at infinity on contractible curves, such a compression may be performed in a localised manner.

\begin{proposition}\label{pr:localisedlooptheorem}
	Let $M$ be an orientable 3-manifold. Suppose $M$ is simply connected at infinity on contractible curves. Let $B \subset M$ be a compact subset, and consider a compact subset $K \subset M$ containing $B$ such that
	\begin{equation} \label{eq:simply_connected_infinity}
		\ker{(\pi_1(M-K) \rightarrow \pi_1(M))} = \ker{(\pi_1(M-K) \rightarrow \pi_1(M-B))}.
	\end{equation}
	Let $\Sigma \subset M-K$ be a compressible embedded orientable surface. Then $\Sigma$ can be compressed into an incompressible surface $\Sigma' \subset M-B$ so that the surface obtained at each step of the compression is contained in $M-B$.
\end{proposition}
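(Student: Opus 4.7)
The plan is to iterate a localized version of the Loop Theorem along the compression process, carrying along an auxiliary property throughout. For an embedded closed orientable surface $S \subset M - B$, consider the property
\begin{equation*}
(P) \quad \text{every closed curve } \alpha \subset S \text{ nullhomotopic in } M \text{ is nullhomotopic in } M - B.
\end{equation*}
Hypothesis \eqref{eq:simply_connected_infinity} directly yields that the initial $\Sigma \subset M - K$ satisfies (P). The key observation is that if $S \subset M - B$ satisfies (P) and is compressible in $M$, then the Loop Theorem applied in $M$ produces a loop $\alpha \subset S$ nontrivial in $\pi_1(S)$ with $[\alpha] = 0$ in $\pi_1(M)$; by (P), $[\alpha] = 0$ in $\pi_1(M - B)$, so $\pi_1(S) \to \pi_1(M - B)$ has nontrivial kernel. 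A second application of the Loop Theorem (Theorem \ref{th:looptheorem}), this time inside the open orientable 3-manifold $M - B$, then produces an embedded compression disc $D \subset M - B$ for $S$.

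Given such a disc $D \subset M - B$, I carry out the standard compression described before the proposition entirely inside $M - B$: choose a tubular neighbourhood $\varphi: D \times [-1,1] \to M - B$ of $D$, remove the band $\varphi(\partial D \times [-1,1])$, and glue back the two parallel discs $D^\pm := \varphi(D \times \set{\pm 1})$. The resulting surface $\hat S \subset M - B$ is obtained from $S$ by a compression whose support lies entirely in $M - B$.

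The crucial step is to show that $\hat S$ again satisfies (P), so that the procedure may be iterated. Set $T := S \setminus \intt(\varphi(\partial D \times [-1,1]))$, so that $\hat S = T \cup D^+ \cup D^-$ is obtained from $T$ by capping off its two boundary circles with discs. By van Kampen, every closed curve $\beta \subset \hat S$ is freely homotopic inside $\hat S$ to some closed curve $\beta' \subset T \subset S$. If $\beta$ is nullhomotopic in $M$, then so is $\beta'$; property (P) for $S$ yields that $\beta'$ is nullhomotopic in $M - B$; and since the free homotopy between $\beta$ and $\beta'$ is supported inside $\hat S \subset M - B$, we conclude that $\beta$ is nullhomotopic in $M - B$.

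Iterating the argument produces a finite sequence $\Sigma = \Sigma_0, \Sigma_1, \Sigma_2, \ldots \subset M - B$, each $\Sigma_i$ satisfying (P) and obtained from $\Sigma_{i-1}$ by a compression supported inside $M - B$. The sequence terminates in finitely many steps by the standard complexity argument (e.g.\ the lexicographic order on the sorted multiset of component genera strictly decreases at each compression), yielding the desired incompressible surface $\Sigma' \subset M - B$. The most delicate point is the propagation of (P) under compression; once this is in place, the rest of the argument is a direct repackaging of the Loop Theorem combined with hypothesis \eqref{eq:simply_connected_infinity}.
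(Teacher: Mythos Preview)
Your proof is correct and follows essentially the same approach as the paper's: both arguments push curves on the compressed surface off the capping discs to land back on (a piece of) the previous surface, then invoke the Loop Theorem inside $M-B$. The only cosmetic difference is that you carry the invariant (P) inductively one compression at a time, whereas the paper pushes all the way back to $\Sigma_0 = \Sigma' \cap \Sigma \subset M-K$ in one step and applies hypothesis~\eqref{eq:simply_connected_infinity} directly; your first invocation of the Loop Theorem in $M$ is also superfluous, since mere compressibility already gives the nontrivial kernel element.
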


\begin{proof}
	Suppose that $\Sigma' \subset M - B$ is the result of compressing $\Sigma$ a number $k$ of times. One can write $\Sigma'$ as the union of $\Sigma_0 = \Sigma' \cap \Sigma$, the part corresponding to the original surface $\Sigma$, which is contained in $M-K$, and some discs $D^\pm_1, \dots, D^\pm_k \subset M-B$ glued to $\Sigma_0$ during the former compressions.
	
	\begin{figure}[ht]
		\centering
    \def\svgwidth{1\columnwidth}
    \import{./Figures/}{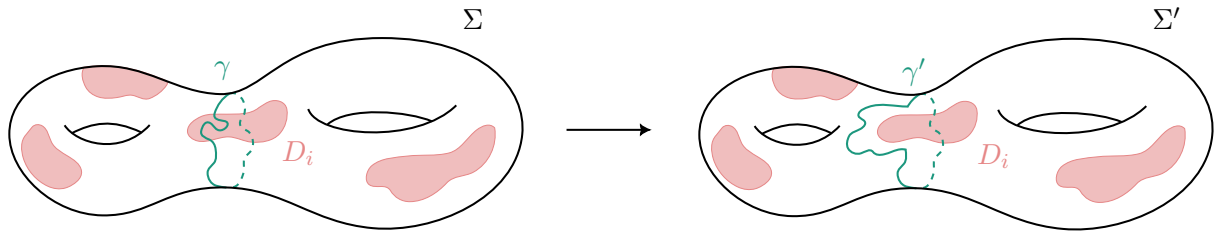}

		\caption{Deforming a loop.}
		\label{fi:deforming_loop}
	\end{figure}

	Now, take a closed curve $\gamma \subset \Sigma$ representing a nontrivial element of $\ker{(\pi_1(\Sigma') \rightarrow \pi_1(M))}$. If $\gamma$ intersects some disc $D^\pm_i$, one can homotope every arc of $\gamma \cap D^\pm_i$ to the boundary $\partial D^\pm_i$, and push them slightly beyond, so that the resulting curve avoids $D^\pm_i$; see Figure \ref{fi:deforming_loop}. Repeating the procedure for each disc, the closed curve $\gamma$ can be homotoped to a closed curve $\gamma'$ lying in $\Sigma_0$. Now, since $\gamma'$ is a contractible loop contained in $\Sigma_0 \subset M - K$, the loop $\gamma'$ is already contractible in $M-B$ by the relation (\ref{eq:simply_connected_infinity}). We obtain
	\begin{equation*}
		\ker{(\pi_1(\Sigma') \rightarrow \pi_1(M))} = \ker{(\pi_1(\Sigma') \rightarrow \pi_1(M-B))}.
	\end{equation*}
	Now, we conclude by applying the Loop Theorem to the surface $\Sigma'$ contained in $M-B$, which gives an embedded disc $D \subset M-B$ such that $\Sigma' \cap D = \partial D$ is a closed curve representing a nontrivial element in $\ker{(\pi_1(\Sigma') \rightarrow \pi_1(M-B))}$. Thus, the result of the compression of $\Sigma'$ along $D$ is again contained in $M-B$.
\end{proof}

The next result provides a decomposition of the manifolds into compact domains along incompressible surfaces.

\begin{proposition} \label{pr:prime_decomposition}
	Let $M$ be a complete orientable Riemannian 3-manifold. Suppose that $M$ is simply connected at infinity on contractible curves. Then $M$ admits an exhaustion by compact connected domains
	\begin{equation*}
		K_1 \subset K_2 \subset \cdots \subset M
	\end{equation*}
	with $K_i \subset \mathring{K}_{i+1}$ and $M = \cup_i K_i$, such that $\cup_i\partial K_i$ is a locally finite disjoint collection of embedded orientable closed incompressible surfaces.
\end{proposition}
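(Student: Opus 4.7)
The plan is to build the exhaustion $(K_i)$ inductively, combining a compact exhaustion of $M$ by metric balls, the hypothesis that $M$ is simply connected at infinity on contractible curves, and the localised compression from Proposition \ref{pr:localisedlooptheorem}, while tracking the underlying three-dimensional cobordism during the compression steps.

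Fix a basepoint $p \in M$. Suppose $K_0, \dots, K_{i-1}$ have already been produced (with $K_0 := \emptyset$). Choose $r_i \geq i$ large enough so that the closed metric ball $B_i := \bar{B}(p, r_i)$ contains $K_{i-1}$; note that $B_i$ is connected by Hopf--Rinow. Apply the simply-connectedness-at-infinity-on-contractible-curves hypothesis to $B_i$ to obtain a compact $K_i' \supset B_i$ satisfying the corresponding kernel equality. Using Sard's theorem on a proper smooth approximation of $d(p, \cdot)$, pick a compact smooth domain $D_i$ with $K_i' \subset \mathring{D}_i$ whose boundary $\Sigma_i$ is a smooth closed orientable surface lying in $M - K_i'$.

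Proposition \ref{pr:localisedlooptheorem} applied to $(B_i, K_i', \Sigma_i)$ produces a finite sequence of elementary disc compressions, each performed in $M - B_i$, terminating with an incompressible closed orientable surface $\Sigma_i' \subset M - B_i$. I perform the matching three-dimensional surgeries in parallel. Set $K_i^{(0)} := D_i$, and at step $j$ observe that the compression disc $D^{(j)} \subset M - B_i$ has interior disjoint from $\partial K_i^{(j-1)}$, so by connectedness it lies entirely in $\mathring{K}_i^{(j-1)}$ or entirely in $M \setminus K_i^{(j-1)}$. In the first case define $K_i^{(j)} := K_i^{(j-1)} \setminus \mathring{N}(D^{(j)})$; in the second, $K_i^{(j)} := K_i^{(j-1)} \cup N(D^{(j)})$, where $N(D^{(j)}) \subset M - B_i$ is a thin collar of the disc. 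Either way, $K_i^{(j)}$ is a compact 3-submanifold whose boundary is exactly the newly compressed surface $\Sigma_i^{(j)}$ and which still contains $B_i$. After finitely many steps one reaches a compact $\hat{K}_i \supset B_i$ whose boundary $\Sigma_i'$ is incompressible. Finally, define $K_i$ to be the connected component of $\hat{K}_i$ containing $B_i$; since $B_i$ is connected this is well-defined, and $\partial K_i$ is a union of components of $\Sigma_i'$, hence still incompressible.

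The remaining properties follow formally. The nesting $K_{i-1} \subset B_i \subset \mathring{K}_i$ holds because $\partial K_i \subset \Sigma_i' \subset M - B_i$; the exhaustion $M = \bigcup_i K_i$ follows from $r_i \to \infty$; and the collection $\bigcup_i \partial K_i$ is locally finite with pairwise disjoint components because $\partial K_j \subset K_j \subset \mathring{K}_{j+1}$ for every $j$. The main delicate step, and really the only non-bookkeeping part of the argument, is the cobordism tracking in the third paragraph: one must verify that each two-dimensional surgery produced by Proposition \ref{pr:localisedlooptheorem} is realised by a three-dimensional surgery preserving compactness, sending the boundary to the compressed surface, and keeping $B_i$ in the interior. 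Everything else is standard 3-manifold bookkeeping.
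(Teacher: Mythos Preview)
Your proof is correct and follows essentially the same strategy as the paper's: build the exhaustion inductively by taking a large ball $B_i$ containing the previous stage, invoking the simply-connectedness-at-infinity-on-contractible-curves hypothesis to get $K_i'$, taking a smooth compact domain with boundary $\Sigma_i$ outside $K_i'$, and then applying Proposition~\ref{pr:localisedlooptheorem} to compress $\Sigma_i$ to an incompressible surface inside $M-B_i$. The only real difference is in how you justify that the compressed surface bounds a compact domain containing $B_i$: you track the three-dimensional domain explicitly through each handle surgery, whereas the paper argues more tersely that $\Sigma'$ is homologous to $\Sigma=\partial B'$ and therefore encloses a compact connected region containing $B$.
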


\begin{proof}
	Let $x \in M$ and consider the singleton $K_0 = \set{x}$. Suppose we have constructed $K_0 \subset \dots \subset~K_i$, with $K_j \subset U(K_j , 1) \subset K_{j+1}$, where $U(K_j , 1)$ is the closed 1-neighbourhood of $K_j$ (see Definition~\ref{de:metric_neighbourhood}~(\ref{eq:metric_neighbourhood})), such that $\partial K_j$ is a closed incompressible surface. Take a closed metric ball~$B$ containing $U(K_i , 1)$. Since $M$ is simply connected at infinity on contractible curves, there is a compact subset~$K$ containing $B$ such that
	\begin{equation*}	
		\ker{(\pi_1(M-K) \rightarrow \pi_1(M))} = \ker{(\pi_1(M-K) \rightarrow \pi_1(M-B))}.
	\end{equation*}
	Now, take a metric ball $B'$ containing $K$. After approximating the distance function $d(x,\cdot)$ by a smooth function and slightly perturbing the radius of $B'$, we can suppose that the boundary of $B'$ is an embedded orientable surface $\Sigma$. Since the surface $\Sigma$ lies in $M-K$, we can use Proposition~\ref{pr:localisedlooptheorem} to compress it inside $M-B$ into an incompressible surface $\Sigma'$ (if $\Sigma$ is already incompressible, we have~$\Sigma' = \Sigma$). Since $B \subset B'$ and $\Sigma'$ is homologous to $\Sigma$, the surface $\Sigma'$ encloses a compact connected component $K_{i+1}$ containing $B$. The boundary $\partial K_{i+1}$ of $K_{i+1}$ lies in $\Sigma'$, and thus, is incompressible.
	
	Finally, since $K_i \subset U (K_i , 1) \subset B \subset K_{i+1}$, we conclude that the compact subset $K_{i+1}$ contains the closed metric ball $B(x,i+1)$. Hence, $\cup_i K_i = M$.
\end{proof}

\section{Fill radius and number of ends} \label{se:number_ends}

Recall the following notion.

\begin{definition}
	Let $X$ be a connected locally finite simplicial complex. Given a subcomplex~$L \subset X$, denote by $n(L)$ the number of connected components of $L$. The \emph{number of topological ends} of $X$ is defined as
\begin{equation*}
	e(X) := \sup\set{n(X-K) \mid K \subset X \text{ finite subcomplex such that } X-K \text{ has no finite component}}.
\end{equation*}
\end{definition}

By \cite[Theorem 3]{Epstein_1961}, if $G$ is a group acting cocompactly by covering transformations on a simplicial complex $\bar{X}$, then the number of ends of $\bar{X}$ depends uniquely on the group $G$. This implies that the number of ends of $\bar{X}$ is a group invariant of $G$.

\begin{definition} \label{de:number_ends_group}
	The \emph{number of ends} of a finitely generated group $G$ is defined as $e(G) := e(\bar{X})$, where $\bar{X} \rightarrow X$ is any regular covering of a finite simplicial complex $X$ with covering transformation group $G$.
\end{definition}

In particular, the number of ends of a finitely generated group $G$ coincides with the number of topological ends of its Cayley graph. Similarly, the number of ends of the fundamental group $\pi_1(M)$ of a closed manifold $M$ coincides with the number of topological ends of its universal cover~$\tilde{M}$.

\begin{remark}
	An easy consequence of the definition is that $e(G) = 0$ if and only if $G$ is a finite group. Using the theory of covering spaces, it can be shown \cite[Theorem 12]{Epstein_1961} that $e(G) = 2$ if and only if $G$ is virtually infinite cyclic (that is, $G$ contains an infinite cyclic subgroup of finite index). More generally, using the theory of covering spaces, one can prove that the number of ends that a group may have is either 0, 1, 2 or infinite (see \cite[Theorem 10]{Epstein_1961} for a proof).
\end{remark}

By \cite{Ramachandran_Wolfson_2010}, the fundamental group of a closed manifold of any dimension, whose Riemannian universal cover has bounded fill radius cannot contain finitely generated subgroups with one end, and as a consequence, the fundamental group of such a manifold is virtually free (recall that a group $G$ is \emph{virtually free} if it contains a finite index free subgroup). Recently, the same strategy was used in \cite{CLL_2023} to prove a classification of closed manifolds admitting a metric of positive scalar curvature in dimensions 4 and 5, under some additional topological assumptions.\medskip

In this section, we derive the main result of \cite{Ramachandran_Wolfson_2010} under a weaker hypothesis (the fill radius of $\tilde{M}$ is not necessarily bounded, but has at most $c$-linear growth at infinity with $c < \frac{1}{3}$), adapting ideas present in \cite{Ramachandran_Wolfson_2010, CLL_2023, Gromov_Lawson_1983}. Note that the results in this section are valid for manifolds of any dimension.

\begin{theorem} \label{th:Ramachandran_Wolfson}
	Let $M$ be a complete Riemannian manifold, and consider its Riemannian universal cover $\tilde{M}$. Suppose the fill radius of $\tilde{M}$ has at most $c$-linear growth at infinity, with $c < \frac{1}{3}$.
		
	Then, the fundamental group $\pi_1(M)$ does not contain any finitely generated subgroup with exactly one end.
\end{theorem}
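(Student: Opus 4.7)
My plan is to argue by contradiction: assuming that $H \leq \pi_1(M)$ is a finitely generated subgroup with exactly one end, I will construct a sequence of closed curves in $\tilde{M}$ whose fill radii violate the hypothesised at-most $c$-linear growth. The setup is to pass to the intermediate cover $p \colon \tilde{M} \to \hat{M} := \tilde{M}/H$. Since $\pi_1(\hat{M}) = H$ has one end, so does $\hat{M}$, and this one-endedness will be the source of the forthcoming construction. After fixing a basepoint $\hat{x}_0 \in \hat{M}$ with a lift $\tilde{x}_0 \in \tilde{M}$, I choose a compact connected ``core'' $\hat{K}_0 \subset \hat{M}$ containing $\hat{x}_0$ together with loops representing a finite generating set of $H$; its preimage $\tilde{K}_0 = p^{-1}(\hat{K}_0)$ is then a connected $H$-invariant subset of $\tilde{M}$ containing the orbit $H \cdot \tilde{x}_0$.

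The heart of the proof is the construction, for every large integer $n$, of a closed curve $\tilde{\eta}_n$ in $\tilde{M}$ based at $\tilde{x}_0$ with the following features: it is contained in a ball $B(\tilde{x}_0, R_n)$ with $R_n \asymp 3n$, and its fill radius is at least of order $n$. Fix a nontrivial element $h \in H$ with a short representing loop $\hat{\gamma}_0 \subset \hat{K}_0$; then, using one-endedness of $\hat{M}$, for every large $n$ I exhibit a point $\hat{y}_n \in \hat{M}$ at distance at least $3n$ from $\hat{x}_0$ together with a loop $\hat{\gamma}_n$ based at $\hat{x}_0$ that travels out to $\hat{y}_n$ and back while still representing $h$, the correction making the homotopy class equal to $h$ being obtained by concatenation with generators of $H$ supported in $\hat{K}_0$. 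The loop $\hat{\eta}_n := \hat{\gamma}_n \cdot \hat{\gamma}_0^{-1}$ is then null-homotopic in $\hat{M}$, and therefore lifts to a closed curve $\tilde{\eta}_n$ in $\tilde{M}$.

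The fill radius estimate will then follow from the key inequality $\fillrad{(\tilde{\eta}_n)} \geq \fillrad_{\hat{M}}(\hat{\eta}_n)$, which is obtained by projecting any filling disc of $\tilde{\eta}_n$ in $\tilde{M}$ via $p$ (which is distance non-increasing) to a filling disc of $\hat{\eta}_n$ in $\hat{M}$. To bound the right-hand side from below, I intend to use that $\hat{\gamma}_n$ is forced to reach distance at least $3n$ from $\hat{x}_0$ while $\hat{\gamma}_0$ stays in $\hat{K}_0$; any null-homotopy of their difference must therefore interpolate between these regions, and the one-endedness of $\hat{M}$ prevents such an interpolation from staying close to $\hat{\eta}_n$. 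The expected conclusion is $\fillrad{(\tilde{\eta}_n)} \gtrsim n$ while $\tilde{\eta}_n \subset B(\tilde{x}_0, R_n)$ with $R_n \leq 3n + O(1)$, so the ratio approaches $1/3$ as $n \to \infty$, contradicting the hypothesis $c < 1/3$.

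The main obstacle is the lower bound on $\fillrad_{\hat{M}}(\hat{\eta}_n)$. Although morally ``the null-homotopy must sweep across the region between $\hat{\gamma}_n$ and $\hat{\gamma}_0$'', making this precise is delicate in three dimensions because filling discs are flexible and can be deformed in many ways. I anticipate that the cleanest implementation will project any putative filling disc to $\hat{M}$ and then use the connectedness of the single end of $\hat{M}$ to pin down a point of the disc at distance approximately $n$ from the boundary $\hat{\eta}_n$. The precise constant $1/3$ should emerge from the geometric ratio $n : 3n$ between the penetration depth into the end and the total radius of the enclosing ball, which is the reason the hypothesis is formulated with this specific threshold.
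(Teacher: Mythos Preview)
Your proposal has a fundamental error at the outset: the claim that ``since $\pi_1(\hat{M}) = H$ has one end, so does $\hat{M}$'' is false in general. The number of ends of $H$ equals the number of ends of a space on which $H$ acts freely and \emph{cocompactly}, but $\hat{M} = \tilde{M}/H$ is non-compact whenever $M$ is non-compact or $H$ has infinite index. For instance, with $M = T^2 \times \R$ and $H = \pi_1(M) = \Z^2$ one has $e(H) = 1$, yet $\hat{M} = M$ has two ends. Since the theorem does not assume $M$ closed, your construction of the loops $\hat{\gamma}_n$ via one-endedness of $\hat{M}$ breaks down. Even granting this, two further claims remain unestablished. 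The estimate $R_n \asymp 3n$ for the radius in $\tilde{M}$ containing $\tilde{\eta}_n$ is not justified: a lift is only controlled by arc-length, and a loop in $\hat{M}$ that reaches distance $3n$ and returns has length at least $6n$. And your acknowledged ``main obstacle'', the lower bound $\fillrad_{\hat{M}}(\hat{\eta}_n) \gtrsim n$, has no mechanism behind it: a null-homotopic loop traversing a geodesic out and a nearby path back can be filled by a disc in a thin tube around it, so one-endedness alone does not force the filling to spread.

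The paper avoids all three issues by working directly in $\tilde{M}$ with the $H$-orbit $\bar{X}$ of a lifted bouquet of generating loops; it is $\bar{X}$ (a Cayley graph of $H$), not $\hat{M}$, that has one end. The engine is a diameter estimate (their Lemma~\ref{le:Gromov_Lawson_Lemma}): each connected component of a level set $\partial U(\bar{Z},R)$ has diameter below $6c\,(d(x,Z)+\diam Z + R)$, proved by slicing a filling disc. Applying this with $\bar{Z}=\bar{X}$ they show that minimizing geodesics in $\tilde{M}$ between points of $\bar{X}$ stay within a uniform distance of $\bar{X}$; hence the midpoint $y_0$ of a very long such geodesic projects to within bounded distance of the basepoint $x$ in $M$. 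One-endedness of $\bar{X}$ then forces the two intersection points $z_1,z_2$ of the geodesic with $\partial B(y_0,R)$ to lie in the same component, so applying the lemma again with $\bar{Z}=\{y_0\}$ gives $2R = d(z_1,z_2) < 6c\,R + O(1)$, contradicting $c < \tfrac{1}{3}$. The minimizing-geodesic plus level-set argument is what produces the sharp constant and sidesteps the need for any direct fill radius lower bound on a handcrafted loop.
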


We start by proving the following version of \cite[Corollary 10.11]{Gromov_Lawson_1983} for manifolds whose Riemannian universal cover has fill radius with a certain growth at infinity.

\begin{lemma} \label{le:Gromov_Lawson_Lemma}
	Let $M$ be a complete Riemannian manifold, and denote by $p: \tilde{M} \rightarrow M$ its universal Riemannian cover.
	Let $x \in M$ be a point. Let $\beta \geq 0$ and $c > 0$ be constants, and suppose that either $\beta < 1$, or $\beta = 1$ and $c < \frac{1}{2}$. Suppose there is a constant $R'_0 \geq 0$ such that if $R \geq R'_0$ then, for every closed curve $\gamma$ lying in $B(x,R)$ and contractible in $M$, any of its lifts $\tilde{\gamma}$ to $\tilde{M}$ verifies
	\begin{equation*}
		\fillrad{(\tilde{\gamma})} < c R^\beta.
	\end{equation*}
	Let $Z \subset M$ be a path-connected compact subset and consider a path-connected component $\bar{Z}$ of~$p^{-1}(Z) \subset \tilde{M}$. Then, there is a constant $R''_0 \geq R'_0$ such that if $R \geq R''_0$, every path-connected component $C_R$ of the level set $\partial U(\bar{Z},R)$ satisfies
	\begin{equation*}
		\diam(C_R) < 6c\, (d(x,Z) + \diam{(Z)} + R)^\beta .
	\end{equation*}
\end{lemma}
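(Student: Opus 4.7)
The plan is to argue by contradiction, adapting Gromov--Lawson's scheme from \cite[Corollary~10.11]{Gromov_Lawson_1983}. Suppose $p, q \in C_R$ satisfy $d_{\tilde M}(p,q) \geq 6L$ where $L := c(d(x,Z)+\diam(Z)+R)^\beta$, with $R$ exceeding a constant $R''_0 \geq R'_0$ to be chosen large enough that $R^* := d(x,Z)+\diam(Z)+R \geq R'_0$ and $R > 2L$ (the latter is automatic for $\beta<1$, and possible in the linear case $\beta=1$ precisely because $c < \tfrac{1}{2}$). Connect $p, q$ by a path $\tau$ inside $C_R$, choose minimizing geodesics $\alpha_p, \alpha_q$ of length $R$ from $p, q$ to closest points $z_p, z_q \in \bar Z$, and since $Z$ is path-connected and $\bar Z$ is a path-component of its preimage, pick a path $\beta$ in $\bar Z$ from $z_p$ to $z_q$. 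The loop $\sigma := \alpha_p \cdot \tau \cdot \alpha_q^{-1} \cdot \beta^{-1}$ in $\tilde M$ projects to a loop in $U(Z,R) \subset B(x, R^*)$, contractible in $M$ since $\tilde M$ is simply connected; the hypothesis then gives $\fillrad(\sigma) < L$, so $\sigma$ bounds a disc $f: \D^2 \to \tilde M$ whose image lies in an open $L'$-neighborhood of $\sigma$ for some $L' < L$.

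Decompose $\sigma$ into its two arcs meeting at $p, q$: the arc $\tau$ itself and the complementary arc $\rho := \alpha_q^{-1} \cdot \beta^{-1} \cdot \alpha_p$. Cover $\D^2$ by the open sets $U_\tau := f^{-1}\bigl(\{y : d(y,\tau) < L'\}\bigr)$ and $U_\rho := f^{-1}\bigl(\{y : d(y,\rho) < L'\}\bigr)$. Since $d(\cdot,\bar Z) \equiv R$ along $\tau$ while $d(\cdot,\bar Z)$ on $\rho$ attains $R$ only at $p, q$ and decreases linearly along $\alpha_p, \alpha_q$, a triangle-inequality bookkeeping using $R > 2L'$ yields
\begin{equation*}
    U_\tau \cap U_\rho \ \subset\ f^{-1}\bigl(B(p, 3L') \cup B(q, 3L')\bigr),
\end{equation*}
and the two balls are disjoint in $\tilde M$ since $d(p,q) \geq 6L > 6L'$.

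To extract the contradiction, take a partition of unity $\{\psi_\tau, \psi_\rho\}$ subordinate to $\{U_\tau, U_\rho\}$ and set $\phi := \psi_\tau - \psi_\rho : \D^2 \to [-1,1]$, so that $\phi^{-1}(0) \subset U_\tau \cap U_\rho$ maps under $f$ into the two disjoint balls. On $\partial \D^2$, $\phi \equiv 1$ on the preimage of the middle portion of $\tau$ (points at distance $>3L$ from both $p, q$, nonempty thanks to $d(p,q) \geq 6L$) and $\phi \equiv -1$ on the analogous middle preimage of $\rho$, so $\phi$ changes sign in the two small boundary arcs near $f^{-1}(p)$ and $f^{-1}(q)$. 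After smoothing $\phi$ so that $0$ is a regular value, $\phi^{-1}(0)$ is a properly embedded $1$-submanifold of $\D^2$, and a non-crossing pairing argument on the disc produces an arc $\Gamma \subset \phi^{-1}(0)$ with one endpoint near $f^{-1}(p)$ and the other near $f^{-1}(q)$. Then $f(\Gamma)$ is a connected subset of $B(p,3L') \cup B(q,3L')$ joining points of the two disjoint balls, which is impossible. The main obstacle is making this last topological separation rigorous: if every arc of $\phi^{-1}(0)$ had both endpoints near the same boundary point, then $\D^2$ minus a small neighborhood of $\{f^{-1}(p), f^{-1}(q)\}$ would have constant sign under $\phi$, contradicting the opposite values on the middle arcs --- once this is nailed down, the remaining work is the geometric estimate on $U_\tau \cap U_\rho$ and a routine choice of $R''_0$.
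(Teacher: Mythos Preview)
Your global structure matches the paper's: build the same loop $\sigma$ in $\tilde M$, fill it by a disc $D$ in the $L'$-neighbourhood of $\sigma$, and extract the diameter bound via a separation argument inside $D$. The gap is in the separation step. You correctly establish
\[
U_\tau \cap U_\rho \subset f^{-1}\bigl(B(p,3L')\bigr) \cup f^{-1}\bigl(B(q,3L')\bigr)=:V_p \cup V_q,
\]
with $V_p, V_q$ disjoint. But this immediately forces every connected component of $\phi^{-1}(0)\subset U_\tau\cap U_\rho$ to lie entirely in $V_p$ or entirely in $V_q$. So the arc $\Gamma$ you are looking for, with $f(\Gamma)$ meeting both balls, simply cannot exist; the contradiction you set up never fires. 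Your parity/non-crossing argument only guarantees an arc whose endpoints lie in the two transition sub-arcs of $\partial\D^2$, but nothing forces those sub-arcs to map into distinct balls: the path $\tau$ may wander back close to $p$ or $q$ repeatedly, so the ``middle of $\tau$'' need not be a single arc, the transition region need not consist of just two arcs, and a boundary zero lying in the arc through $f^{-1}(q)$ may perfectly well have image in $B(p,3L')$. Your fallback claim that $\D^2$ minus small neighbourhoods of $f^{-1}(p),f^{-1}(q)$ would carry a constant sign of $\phi$ also fails, because $V_p, V_q$ are arbitrary preimages under $f$ and $\D^2\setminus(V_p\cup V_q)$ need not be connected.

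The paper sidesteps this by choosing a scalar function whose boundary behaviour is controlled: it restricts the distance $d(\cdot,\bar Z)$ to the disc and takes the level set at height $R-cL^\beta$. Because $d(\cdot,\bar Z)$ equals $R$ on $\tau$, equals $0$ on $\beta$, and is \emph{monotone} along each minimising geodesic $\alpha_p,\alpha_q$, this level set meets $\partial D$ at exactly two points, one on each $\alpha_i$, hence contains a single arc connecting them. Every point $a$ on that arc is at distance $\geq cL^\beta$ from both $\tau$ and $\beta$ (the latter uses $R\geq 2cL^\beta$, which is where the hypothesis $\beta<1$ or $c<\tfrac12$ enters), so the fill-radius bound forces $a$ to be within $cL^\beta$ of $\alpha_p\cup\alpha_q$; connectedness of the arc then yields a point close to both geodesics, and the triangle inequality gives $d(p,q)<6cL^\beta$ directly. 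If you want to salvage your approach, replace $\phi$ by $d(f(\cdot),\bar Z)$: the monotonicity along $\alpha_p,\alpha_q$ is exactly the missing ingredient that pins down the boundary structure of the level set.
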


\begin{proof}
	Let $R''_0 \geq R'_0$ be a constant to be determined later. Let $R \geq R'_0$ and denote $L := d(x,Z) + \diam{(Z)} + R$. Fix a connected component $C_R$ of the level set $\partial U(\bar{Z},R)$ and consider two points~$z_1, z_2 \in C_R$. We shall prove that $z_1$ and $z_2$ lie within a distance at most $6cL^\beta$. Let~$\tilde{\eta}$ be a curve in~$C_R$ joining $z_1$ to $z_2$. Join each point $z_i$ to $\bar{Z}$ by a minimal geodesic $\tilde{\eta}_i$, and join the endpoints of $\tilde{\eta}_1$ and $\tilde{\eta}_2$ lying in $\bar{Z}$ by a curve $\tilde{\eta}'$ lying in $\bar{Z}$; see Figure \ref{fi:diameter}. The concatenation~$\tilde{\gamma} = \tilde{\eta}_1 * \tilde{\eta} * \tilde{\eta}_2 * \tilde{\eta}'$ is a closed curve contained in $U(\bar{Z},R)$. Now, consider the projection $\gamma$ of $\tilde{\gamma}$ to~$M$. Then the closed curve $\gamma$ lies in the closed metric ball $B(x,d(x,Z) + \diam{(Z)}+R)$. Therefore,
	\begin{equation*}
		\fillrad{(\tilde{\gamma})} < c\, (d(x,Z) +\diam{(Z)} + R)^\beta = c L^\beta.
	\end{equation*}
	That is, there exists an immersed disc $D \subset \tilde{M}$ with $\partial D = \tilde{\gamma}$ such that any point $a \in D$ satisfies $d(a,\tilde{\gamma}) < c L^\beta$.
	
	\begin{figure}[ht]
		\centering
    \def\svgwidth{0.7\columnwidth}
    \import{./Figures/}{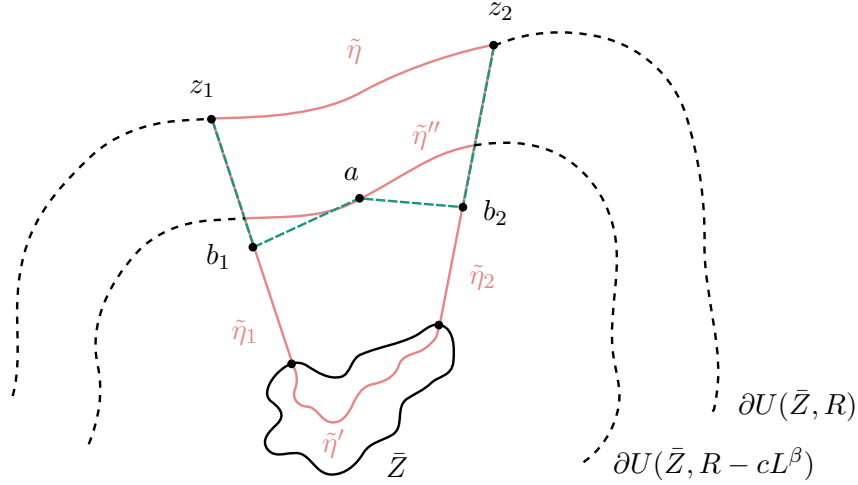}

		\caption{Scheme of the proof of Lemma \ref{le:Gromov_Lawson_Lemma}.}
		\label{fi:diameter}
	\end{figure}
	
		Now, consider the curve $\tilde{\eta}'' = D \cap \partial U(\bar{Z},R-c L^\beta)$. Since either $\beta < 1$ or $\beta = 1$ and $c < \frac{1}{2}$, for~$R''_0$ large enough, if $R \geq R''_0$ then every point $a \in \tilde{\eta}''$ satisfies
		\begin{equation*}
			d(a,\tilde{\eta}') \geq d(a,\bar{Z}) = R - cL^\beta \geq cL^\beta,
		\end{equation*}
		and clearly $d(a,\tilde{\eta}) \geq cL^\beta$.
		Therefore, there is a point $a \in D$ lying within a distance less than $cL^\beta$ from both curves $\tilde{\eta}_1$ and $\tilde{\eta}_2$. That is, there are points $b_i \in \tilde{\eta}_i$ such that $d(a,b_i) < cL^\beta$, for $i = 1,2$. Since $d(b_i,z_i) < 2cL^\beta$ (otherwise the inequality $d(a,b_i) < cL^\beta$ would not hold), we obtain
		\begin{equation*}
			d(z_1 , z_2) \leq d(z_1, b_1) + d(b_1,a) + d(a,b_2) + d(b_2,z_2) < 6cL^\beta.
		\end{equation*}
		That is,
		\begin{equation*}
			d(z_1,z_2) < 6c\, (d(x,Z) + \diam{(Z)} + R)^\beta.
		\end{equation*}
\end{proof}

Now, we prove Theorem \ref{th:Ramachandran_Wolfson}.

\begin{proof}[Proof of Theorem \ref{th:Ramachandran_Wolfson}]
	Let $x \in M$ be a point and $R'_0 \geq 0$ such that if $\gamma$ is a contractible closed curve lying in the closed metric ball $B(x,R)$ for $R \geq R'_0$, then any of its lifts $\tilde{\gamma}$ to the universal cover $\tilde{M}$ satisfies
	\begin{equation*}
		\fillrad{(\tilde{\gamma})} < cR,
	\end{equation*}
	with $c < \frac{1}{3}$. Suppose that $G$ is a finitely generated subgroup of $\pi_1(M,x)$ with exactly one end. In particular, the subgroup $G$ is infinite. Consider a collection of closed curves $\eta_1, \dots, \eta_k$ based at $x$ representing the generators of $G$. Now, consider the lift $\bar{X}$ of $X = \cup_i\eta_i$ to the universal cover~$\tilde{M}$, and fix a lift $\tilde{x} \in \bar{X}$ of the point $x$. Notice that $\bar{X}$ is homeomorphic to the Cayley graph of $G$ associated to the generating set represented by the homotopy classes of the curves $\set{\eta_i}$.\\
	
	We shall need the following result.
	
	\begin{lemma} \label{le:bounded_distance}
		There exists a constant $H > 0$ such that for any minimising geodesic $\tilde{\gamma}$ joining two points $y_1, y_2$ lying in $\bar{X}$, we have
		\begin{equation*}
			\max_{y \in \tilde{\gamma}} d(y,\bar{X}) \leq H.
		\end{equation*}
	\end{lemma}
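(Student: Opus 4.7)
The plan is to deduce the bound $H$ from Lemma~\ref{le:Gromov_Lawson_Lemma} applied to the compact path-connected bouquet $Z = X = \bigcup_i \eta_i$, with lift component $\bar{Z} = \bar{X}$, by exploiting that minimising geodesics in $\tilde{M}$ realise distances. Setting $L_0 = \max_i \mathrm{length}(\eta_i)$, we have $\diam(X) \leq L_0$ and $d(x, X) = 0$, so Lemma~\ref{le:Gromov_Lawson_Lemma} with $\beta = 1$ provides a constant $R''_0$ such that, for every $R \geq R''_0$, every path-connected component of $\partial U(\bar{X}, R)$ has diameter strictly less than $6c\,(L_0 + R)$.

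Let $\tilde{\gamma} \colon [0, L] \to \tilde{M}$ be a minimising geodesic parametrised by arc length with endpoints $y_1, y_2 \in \bar{X}$, and set $f(t) = d(\tilde{\gamma}(t), \bar{X})$. Denote $D = \max_t f(t)$, attained at some $t^* \in (0, L)$, and assume $D \geq 2 R''_0$ (otherwise $H = 2R''_0$ already suffices). Since $f$ is $1$-Lipschitz with $f(0) = f(L) = 0$, I select $t_1 < t^* < t_2$ with $f(t_1) = f(t_2) = D/2$ and $f \geq D/2$ on $[t_1, t_2]$; the Lipschitz bound then forces $t_2 - t_1 \geq D$. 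Granted that $\tilde{\gamma}(t_1)$ and $\tilde{\gamma}(t_2)$ lie in the same path-component of $\partial U(\bar{X}, D/2)$, Lemma~\ref{le:Gromov_Lawson_Lemma} gives
\[
d(\tilde{\gamma}(t_1), \tilde{\gamma}(t_2)) < 6c\,(L_0 + D/2).
\]
Combined with the minimising identity $d(\tilde{\gamma}(t_1), \tilde{\gamma}(t_2)) = t_2 - t_1 \geq D$ and the hypothesis $c < 1/3$, this yields $D(1 - 3c) < 6\,c L_0$, hence $D < \frac{6\,c L_0}{1 - 3c}$. Taking $H = \max\left\{2R''_0,\, \tfrac{6\,c L_0}{1 - 3c}\right\}$ closes the argument.

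The main obstacle I foresee is justifying that $\tilde{\gamma}(t_1)$ and $\tilde{\gamma}(t_2)$ belong to the same path-component of $\partial U(\bar{X}, D/2)$. The segment $\tilde{\gamma}|_{[t_1, t_2]}$ only asserts that they lie in the same connected component $\Omega$ of the deeper region $\{d(\cdot, \bar{X}) \geq D/2\}$, whereas $\partial \Omega$ could a priori split into several components of $\partial U(\bar{X}, D/2)$. I would handle this by replacing $D/2$ with a nearby regular value of a smooth approximation of $d(\cdot, \bar{X})$, via Sard's theorem as in the proof of Lemma~\ref{le:Gromov_Lawson_Lemma}, so that $\partial U(\bar{X}, R)$ is a smooth embedded hypersurface and the boundary component of $\Omega$ surrounding $\tilde{\gamma}(t^*)$ is a single path-component necessarily containing both $\tilde{\gamma}(t_1)$ and $\tilde{\gamma}(t_2)$.
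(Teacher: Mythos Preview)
Your argument follows essentially the same strategy as the paper's: apply Lemma~\ref{le:Gromov_Lawson_Lemma} with $\bar Z=\bar X$ at level $D/2$ and contradict the minimising property once $D$ is large. Your use of the innermost crossings $t_1,t_2$ together with the clean chain $D\le t_2-t_1=d(\tilde\gamma(t_1),\tilde\gamma(t_2))<6c(L_0+D/2)$ is a nice variant of the paper's computation, which instead takes the outermost crossings and compares with the lower bound $d(y_1,y_2)\ge 2H$.

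The one point that needs sharpening is your handling of the same-component issue. Even after smoothing to a regular value, a connected component $\Omega$ of $\{d(\cdot,\bar X)\ge R\}$ may have disconnected boundary (think of a spherical shell in $\R^3$), so it is not automatic that ``the boundary component of $\Omega$ surrounding $\tilde\gamma(t^*)$'' is a single piece containing both $\tilde\gamma(t_1)$ and $\tilde\gamma(t_2)$. The paper closes $\tilde\gamma$ with a path in $\bar X$ (which is path-connected, being a lift of the bouquet $X$) to obtain a contractible loop in the simply connected $\tilde M$; this loop then has zero intersection with each component of the smoothed level hypersurface, forcing the two crossings onto the same component. An equivalent direct argument in your setup: the open sublevel set $\{d(\cdot,\bar X)<R\}$ is connected (every point is joined to $\bar X$ by a minimising segment that stays in it), and since $H_1(\tilde M)=0$ the component $C_1\ni\tilde\gamma(t_1)$ separates $\tilde M$ into two pieces $A\sqcup B$ with the connected sublevel set contained in, say, $A$; as $\tilde\gamma|_{(t_1,t_2)}$ avoids $C_1$ and starts in $B$, it stays in $B$, while $\tilde\gamma(t_2^+)$ lies in the sublevel set $\subset A$, so $\tilde\gamma(t_2)\in\overline A\cap\overline B=C_1$. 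Either justification completes your proof.
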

	
	\begin{proof}
		We argue by contradiction. Suppose that for any $H > 0$, there is a minimal geodesic $\tilde{\gamma}$ joining two points $y_1$ and $y_2$ lying in $\bar{X}$ and a point $y_0 \in \tilde{\gamma}$ such that $d(y_0,\bar{X}) = H$; see Figure \ref{fi:height}. In particular, $d(y_0, y_j) \geq H$ for $j = 1,2$, and $d(y_1,y_2) \geq 2H$.
		
		\begin{figure}[ht]
			\centering
    \def\svgwidth{0.7\columnwidth}
    \import{./Figures/}{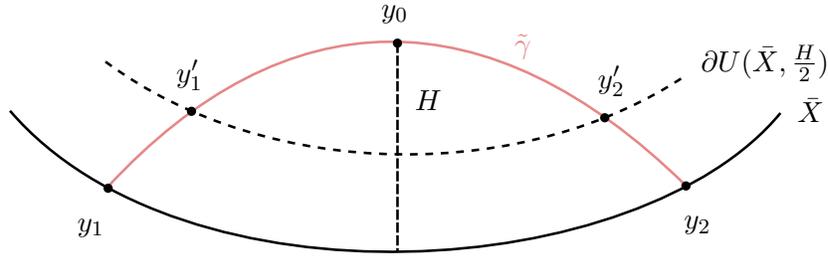}

			\caption{Minimising curve $\tilde{\gamma}$ joining two points $y_1$ and $y_2$ lying in $\bar{X}$.}
			\label{fi:height}
		\end{figure}
		
		Let $y'_j$ be the intersection point of $\tilde{\gamma}$ with the level set $\partial U(\bar{X},\frac{H}{2})$ which is closest to $y_j$, for$j = 1,2$. The points $y'_1$ and $y'_2$ must lie in the same connected component of $\partial U(\bar{X},\frac{H}{2})$, otherwise the concatenation of $\tilde{\gamma}$ with a curve lying in $\bar{X}$ joining $y_1$ with $y_2$, which is contractible, would have nontrivial intersection with the cycle $\partial U(\bar{X},\frac{H}{2})$. Applying Lemma \ref{le:Gromov_Lawson_Lemma} to $\bar{Z} = \bar{X}$, $\beta = 1$ and $c < \frac{1}{3}$, we have that for $H$ large enough,
		\begin{equation*}
			d(y_1,y_2) \leq d(y_1,y'_1) + d(y'_1,y'_2) + d(y'_2,y_2) < H + 6c\,(\diam{(X)} + \tfrac{H}{2}).
		\end{equation*}
		Since $c < \frac{1}{3}$, this contradicts the fact that $d(y_1, y_2) \geq 2H$, for $H$ sufficiently large.
	\end{proof}
	
	Now, let $R \geq R'_0$. Fix a fundamental domain $\Delta$ of $\bar{X}$ for the action of $G$. Since $e(\bar{X}) = e(G) = 1$, there is a compact $K \subset \bar{X}$ containing $U(\Delta, R+H) \cap \bar{X}$ such that $\bar{X} - K$ has a unique (unbounded) connected component.
	
	Take a minimising curve $\tilde{\gamma}$ of $\tilde{M}$ joining two points $y_1$ and $y_2$ in $\bar{X}$ at distance at least~$2T$ in~$\tilde{M}$, where $T > \diam{(K)} + H$. Choose a point $y_0 \in \tilde{\gamma}$ with $d(y_0,y_j) \geq T$ for $j=1,2$ and a point $\bar{y}_0 \in \bar{X}$ at minimal distance from $y_0$. By Lemma \ref{le:bounded_distance}, the minimising curve $\tilde{\gamma}$ is at distance at most~$H$ from~$\bar{X}$. Thus, $d(y_0, \bar{y}_0) \leq H$. Translating the curve $\tilde{\gamma}$ by an element of $G$ if necessary, we can assume that $\bar{y}_0$ lies in $\Delta$. It follows that 
	\begin{equation*}
		B(y_0,R) \cap \bar{X} \subset B(\bar{y}_0,R+H) \cap \bar{X} \subset K.
	\end{equation*}
	Now, by the triangle inequality,
	\begin{equation*}
		d(\bar{y}_0,y_j) \geq d(y_0,y_j) -d(y_0,\bar{y}_0) \geq T-H > \diam{(K)}.
	\end{equation*}
	Thus, both $y_1$ and $y_2$ lie in $\bar{X}-K$ and can be joined by a curve $\tau$ lying in $\bar{X} - K$ by construction. Since $B(y_0,R) \cap \bar{X} \subset K$, the curve $\tau$ lies outside $B(y_0,R)$.
	
		\begin{figure}[htb]
		\centering
    \def\svgwidth{0.7\columnwidth}
    \import{./Figures/}{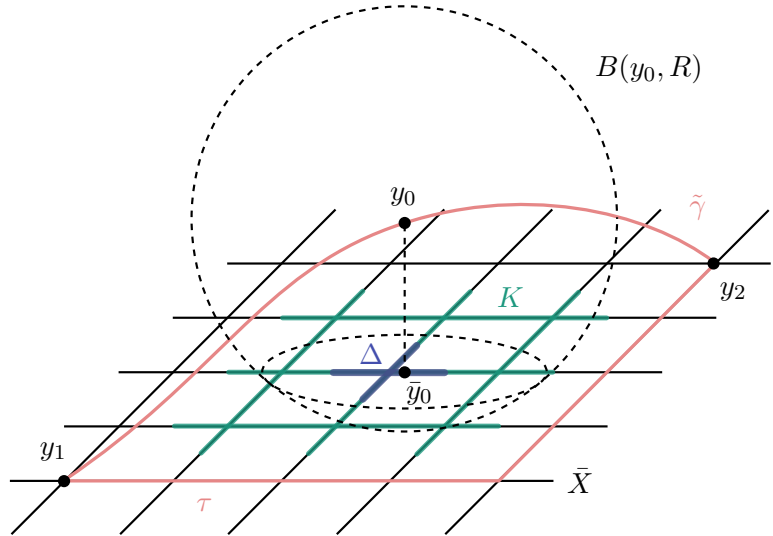}

		\caption{Sketch of the proof of Theorem \ref{th:Ramachandran_Wolfson}.}
		\label{fi:ball}
	\end{figure}
	
	Consider the intersection point $z_j$ of $\tilde{\gamma}$ with the sphere $\partial B(y_0,R)$ which is the closest to the point $y_j$ for $j=1,2$. The points $z_1$ and $z_2$ must lie in the same connected component of $\partial B(y_0,R)$, otherwise the concatenation $\tilde{\gamma} * \tau$ would have a nontrivial intersection number with the cycle $\partial B(y_0,R)$, which is absurd since $\tilde{\gamma} * \tau$ is contractible. By Lemma \ref{le:Gromov_Lawson_Lemma} applied to $\bar{Z} = \set{y_0}$, $\beta = 1$ and $c < \frac{1}{3}$, we have
	\begin{equation} \label{eq:distance_minimising_curve}
		d(z_1,z_2) < 6c \, (d(x,p(y_0)) + R)
	\end{equation}
	where $p: \tilde{M} \rightarrow M$ is the universal covering. Now, recall that $p(y_0)$ is at distance at most $H$ from $X$ by Lemma \ref{le:bounded_distance}. Thus, the distance between $p(y_0)$ and the basepoint $x$ of $X$ is bounded uniformly in $R$. More precisely,
	\begin{equation*}
		d(x,p(y_0)) \leq \diam{(X)} + H.
	\end{equation*}
	Since the curve $\tilde{\gamma}$ is minimising, we have $d(z_1,z_2) = 2R$. By taking $R$ large enough in the inequality~(\ref{eq:distance_minimising_curve}), we obtain a contradiction with $c < \frac{1}{3}$.
\end{proof}

Finally, we recover the main result of \cite{Ramachandran_Wolfson_2010} under more general assumptions (that is, not just for closed manifolds whose universal cover has bounded fill radius).
\begin{corollary} \label{co:Ramachandran_Wolfson}
	Let $M$ be a complete Riemannian manifold with finitely presented fundamental group. Denote by $\tilde{M}$ its Riemannian universal cover. Suppose the fill radius of $\tilde{M}$ has at most $c$-linear growth at infinity, with $c < \frac{1}{3}$. Then $\pi_1(M)$ is virtually free.
\end{corollary}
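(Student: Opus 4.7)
The plan is to invoke Theorem \ref{th:Ramachandran_Wolfson} together with the Stallings--Dunwoody theory of ends of finitely presented groups. Set $G := \pi_1(M)$, which is finitely presented by hypothesis. By Theorem \ref{th:Ramachandran_Wolfson}, $G$ has no finitely generated subgroup with exactly one end.

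I would first recall the relevant group-theoretic ingredients. Stallings' theorem on ends asserts that a finitely generated group with more than one end splits nontrivially as an amalgamated free product or HNN extension over a finite subgroup. Dunwoody's accessibility theorem asserts that for a finitely presented group, the iterated process of splitting over finite subgroups terminates after finitely many steps. Combining the two yields a decomposition of $G$ as the fundamental group of a finite graph of groups in which every edge group is finite and every vertex group is finitely generated with at most one end (that is, finite or one-ended).

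Next, I would observe that the vertex groups are finitely generated subgroups of $G$. By Theorem \ref{th:Ramachandran_Wolfson}, none of them can be one-ended, so every vertex group must be finite. Hence $G$ is the fundamental group of a finite graph of finite groups. By the Karrass--Pietrowski--Solitar theorem, the associated Bass--Serre tree is locally finite and carries a proper action of $G$ with finite stabilizers, so $G$ is quasi-isometric to a tree and is therefore virtually free.

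The main step requiring care is the appeal to Dunwoody's accessibility, which is precisely what guarantees both the finiteness of the decomposing graph and the finite generation of the vertex groups, so that Theorem \ref{th:Ramachandran_Wolfson} can be applied to them. Beyond this, the argument is a direct transcription of the hypothesis through the Stallings--Dunwoody structure theorem.
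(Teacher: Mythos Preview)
Your proof is correct and follows essentially the same approach as the paper: both invoke Theorem~\ref{th:Ramachandran_Wolfson} to exclude one-ended finitely generated subgroups, appeal to Dunwoody's accessibility for finitely presented groups to obtain a finite graph-of-groups decomposition with finite edge groups and at-most-one-ended vertex groups, and then conclude virtual freeness from the resulting graph of finite groups. The only cosmetic difference is that the paper cites Serre for the last step while you cite Karrass--Pietrowski--Solitar; these are equivalent formulations of the same fact.
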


\begin{proof}
	We follow the argument of \cite{Ramachandran_Wolfson_2010}. By Theorem \ref{th:Ramachandran_Wolfson}, $\pi_1(M)$ does not contain finitely generated subgroups of exactly one end. Since $\pi_1(M)$ is finitely presented, then it is accessible by \cite{Dunwoody_1985}. Recall that a group is accessible if it is the fundamental group of a graph of groups such that every edge group is finite and every vertex group is at most one-ended. Finally, an accessible group without one-ended subgroups is virtually free, by a result of Serre \cite[Chapter 2, Section 2.6, Proposition 11]{Serre_1980}.
\end{proof}

\begin{remark}
	In particular, if the fill radius of the universal cover of a complete Riemannian manifold with finitely presented fundamental group is uniformly bounded above, then its fundamental group is virtually free.
\end{remark}

\section{Fill radius and aspherical summands} \label{se:aspherical_summands}

Another consequence of Theorem \ref{th:Ramachandran_Wolfson} is that manifolds which decompose as the connected sum of a manifold with an aspherical summand do not admit complete metrics such that the Riemannian universal cover has at most $c$-linear growth at infinity with $c < \frac{1}{3}$. It actually follows from the following well-known fact; see for instance \cite[Proposition 16.4.1]{Geoghegan_2008}.

\begin{proposition} \label{pr:aspherical_oneend}
	If $P$ is a closed aspherical $n$-manifold with $n \geq 2$, then $\pi_1(P)$ has exactly one end.
\end{proposition}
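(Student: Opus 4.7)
The plan is to identify $e(G) = e(\tilde{P})$, where $G := \pi_1(P)$ (by Definition~\ref{de:number_ends_group} and the free cocompact action of $G$ on $\tilde{P}$), and then use asphericity of $P$ to conclude that $\tilde{P}$ is a contractible connected $n$-manifold. The strategy is to prove the contractible manifold $\tilde{P}$ has exactly one end via Poincar\'e--Lefschetz duality.

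First I would rule out $e(G) = 0$. If $G$ were finite, $\tilde{P}$ would be a finite, hence closed, cover of $P$, so a closed contractible $n$-manifold. This contradicts $H_n(\tilde{P}; \mathbb{Z}/2\mathbb{Z}) \neq 0$, which holds for every closed $n$-manifold with $n \geq 1$. Therefore $G$ is infinite, $\tilde{P}$ is non-compact, and, being simply connected, $\tilde{P}$ is orientable.

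Next I would show $e(\tilde{P}) \leq 1$ using duality. Consider the long exact sequence relating compactly supported, ordinary, and end cohomology:
\begin{equation*}
0 \to H^0_c(\tilde{P}) \to H^0(\tilde{P}) \to H^0_{\mathrm{end}}(\tilde{P}) \to H^1_c(\tilde{P}) \to H^1(\tilde{P}) \to \cdots
\end{equation*}
Since $\tilde{P}$ is connected, non-compact and contractible, one has $H^0_c(\tilde{P}) = 0$, $H^0(\tilde{P}) = \mathbb{Z}$ and $H^1(\tilde{P}) = 0$, whereas $\mathrm{rank}\, H^0_{\mathrm{end}}(\tilde{P})$ equals the number of ends of $\tilde{P}$. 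It follows that $\mathrm{rank}\, H^1_c(\tilde{P}) = e(\tilde{P}) - 1$. Poincar\'e--Lefschetz duality on the orientable open $n$-manifold $\tilde{P}$ yields $H^1_c(\tilde{P}) \cong H_{n-1}(\tilde{P})$, which vanishes because $\tilde{P}$ is contractible and $n - 1 \geq 1$. Hence $e(\tilde{P}) = 1$, and combined with the previous step, $e(G) = 1$.

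The point requiring most care is reconciling the simplicial definition of ends of Definition~\ref{de:number_ends_group} with the one obtained via end cohomology used above; this is standard but merits a brief justification. An equivalent, perhaps smoother route bypasses end cohomology altogether by invoking the Bieri--Eckmann theory of Poincar\'e duality groups: the free cocompact action of $G$ on the contractible $n$-manifold $\tilde{P}$ exhibits $G$ as a $\mathrm{PD}^n$ group, so $H^1(G; \mathbb{Z}G) = 0$ when $n \geq 2$, and the classical Hopf--Specker--Stallings dictionary tells us that an infinite group with vanishing $H^1(G; \mathbb{Z}G)$ has exactly one end.
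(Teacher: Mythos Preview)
Your proof is correct and follows essentially the same approach as the paper's: both identify $e(\pi_1(P))$ with $e(\tilde P)$, use Poincar\'e duality $H^1_c(\tilde P)\cong H_{n-1}(\tilde P)=0$ (via contractibility and $n\geq 2$), and translate this vanishing into $e(\tilde P)=1$ through the exact sequence linking compactly supported cohomology with cohomology of the complements of compacta --- the paper writes out $\varprojlim_K \tilde H^0(\tilde P- K)$ explicitly rather than calling it end cohomology. Your explicit exclusion of the finite-$\pi_1$ case and the alternative route via $\mathrm{PD}^n$-groups are welcome additions that the paper leaves implicit.
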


\begin{proof}
	Endow $P$ with a finite simplicial complex structure and consider its universal cover $\tilde{P}$. By Definition \ref{de:number_ends_group}, we have $e(\pi_1(P)) = e(\tilde{P})$. Let $K \subset \tilde{P}$ be a finite subcomplex whose complementary has no finite component. The long exact sequence corresponding to the pair $(\tilde{P},\tilde{P}-K)$ for reduced cohomology (with coefficients in a field) may be written
	\begin{equation*}
	\dots \leftarrow \underset{\underset{0}{\shortparallel}}{\tilde{H}^1(\tilde{P})} \leftarrow  H^1(\tilde{P},\tilde{P}-K) \leftarrow \tilde{H}^0(\tilde{P}-K) \leftarrow \underset{\underset{0}{\shortparallel}}{\tilde{H}^0(\tilde{P})} \leftarrow  H^0(\tilde{P},\tilde{P}-K) \leftarrow 0
\end{equation*}
and the contractibility of $\tilde{P}$ gives an isomorphism $H^1
(\tilde{P},\tilde{P}-K) \simeq \tilde{H}^0(\tilde{P}-K)$. Now, Poincar\'e's duality gives an isomorphism $H_{n-1}(\tilde{P}) \simeq H_c^1(\tilde{P}) = \varprojlim_K H^1(\tilde{P},\tilde{P}-K)$ (see \cite[Section 3.3]{Hatcher_2002} for the second isomorphism). Therefore,
\begin{equation*}
	H_{n-1}(\tilde{P}) = \varprojlim_K \tilde{H}^0(\tilde{P},\tilde{P}-K).
\end{equation*}
 Since $\tilde{P}$ is contractible, the former space must vanish. Now, since the number of ends of $\tilde{P}$ coincides with $\dim\left( \varprojlim_K H^0(\tilde{P}-K) \right) = \dim\left( \varprojlim_K \tilde{H}^0(\tilde{P}-K) \right)+1$ (see for instance \cite[Theorem 1]{Epstein_1961}), we finally conclude that $e(\tilde{P}) = 1$.
\end{proof}

We deduce the following result from Theorem \ref{th:Ramachandran_Wolfson} and Proposition \ref{pr:aspherical_oneend}.

\begin{corollary} \label{co:no_aspherical_summands}
	Let $P$ be a closed aspherical $n$-manifold and $N$ be an arbitrary $n$-manifold with $n\geq 2$. Then the connected sum $M = P \# N$ does not admit any complete Riemannian metric such that the fill radius of $\tilde{M}$ has at most $c$-linear growth at infinity, with $c < \frac{1}{3}$.
\end{corollary}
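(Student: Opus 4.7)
The plan is to combine Theorem~\ref{th:Ramachandran_Wolfson} with Proposition~\ref{pr:aspherical_oneend}, bridged by the classical van Kampen description of the fundamental group of a connected sum. The overall strategy is to exhibit a finitely generated subgroup of $\pi_1(M)$ with exactly one end; such a subgroup is forbidden by Theorem~\ref{th:Ramachandran_Wolfson} under the assumed fill-radius growth, so no complete Riemannian metric on $M$ with that property can exist.

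The key step is to identify $\pi_1(P)$ as a subgroup of $\pi_1(M)$. For $n \geq 3$, I would apply van Kampen's theorem to the standard cover of $M = P \# N$ by open thickenings of $P \setminus B^n$ and $N \setminus B^n$, whose intersection is an annular neighbourhood of the gluing $S^{n-1}$. Since $S^{n-1}$ is simply connected and removing an open ball from a manifold of dimension at least three does not alter the fundamental group, one obtains the free-product decomposition $\pi_1(M) \cong \pi_1(P) \ast \pi_1(N)$; each factor embeds canonically as a subgroup of the free product. Closedness of $P$ ensures that $\pi_1(P)$ is finitely generated, and Proposition~\ref{pr:aspherical_oneend} gives $e(\pi_1(P)) = 1$. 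Theorem~\ref{th:Ramachandran_Wolfson} then contradicts the existence of the alleged metric on $M$.

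For $n = 2$ the free-product formula can fail (for instance, $\pi_1(T^2 \# T^2)$ is a surface group rather than $\mathbb{Z}^2 \ast \mathbb{Z}^2$), so the argument must be adapted. A closed aspherical surface $P$ has genus at least one, and its connected sum with any surface remains aspherical; in the closed case, Proposition~\ref{pr:aspherical_oneend} applies directly to $\pi_1(M)$, which is then itself finitely generated and one-ended, and Theorem~\ref{th:Ramachandran_Wolfson} again yields the contradiction. The main obstacle is really just this low-dimensional bookkeeping: once the subgroup embedding is set up (or, in dimension two, the direct one-endedness of $\pi_1(M)$ is verified), the algebra collapses and the two previously established results finish the proof.
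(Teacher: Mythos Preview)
Your argument is essentially the paper's own: the paper simply states that the corollary follows from Theorem~\ref{th:Ramachandran_Wolfson} and Proposition~\ref{pr:aspherical_oneend}, and your van Kampen step for $n \geq 3$---yielding $\pi_1(M) \cong \pi_1(P) \ast \pi_1(N)$ and hence $\pi_1(P) \leq \pi_1(M)$ as a finitely generated one-ended subgroup---is precisely the intended bridge.

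You go beyond the paper by flagging the $n = 2$ case and treating closed $N$ directly via asphericity of $M$. However, your explicit restriction to ``the closed case'' leaves $n = 2$ with open $N$ unaddressed, and there the one-ended-subgroup strategy genuinely breaks down: $M$ is then an open surface, $\pi_1(M)$ is free, and every finitely generated subgroup of a free group has $0$, $2$, or infinitely many ends, so Theorem~\ref{th:Ramachandran_Wolfson} imposes no obstruction whatsoever. The paper does not close this gap either (and only invokes the corollary with $n = 3$ in the proof of Theorem~\ref{th:main_general}), so you have not lost anything relative to the source; but be aware that the two-dimensional open case would require an entirely different mechanism, if the statement is even true there.
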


By \cite{Gromov_Lawson_1983}, closed aspherical 3-manifolds do not support Riemannian metrics of positive scalar curvature. The same statement for any dimension was conjectured by Gromov \cite{Gromov_1986}.

\begin{conjecture}\label{conj:Gromov_aspherical}
	A closed aspherical $n$-manifold does not admit any Riemannian metric with positive scalar curvature.
\end{conjecture}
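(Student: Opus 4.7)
The statement is Gromov's celebrated aspherical conjecture, which is classical for $n=3$ via Gromov--Lawson and Schoen--Yau, was established for $n=4$ by Chodosh--Li (with partial progress in $n=5$ by Chodosh--Li--Liokumovich and others), and remains wide open for $n \geq 6$. My plan is the dimensional-descent strategy through stable minimal hypersurfaces refined by $\mu$-bubbles, which is the only avenue with a real track record in low dimensions; I outline it here and flag the obstacle that has so far prevented its extension to the general case.

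The induction on $n$ begins at $n=2$, where Gauss--Bonnet forces an aspherical closed surface to have $\chi \leq 0$, incompatible with $\scal>0$. For the inductive step, assume $M^n$ is closed aspherical with $\scal>0$. Since $\widetilde{M}$ is contractible of dimension $\geq 2$, the group $\pi_1(M)$ is infinite, and after passing to a finite cover one seeks a surjection $\pi_1(M) \to \Z$, hence a nonzero class in $H^1(M;\Z)$, and by Poincar\'e duality a nonzero class $\alpha \in H_{n-1}(M;\Z)$. Realize $\alpha$ by an area-minimizing integral current $\Sigma$; the Schoen--Yau stability inequality together with a conformal change on $\Sigma$ yields a metric with $\scal_\Sigma > 0$. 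To close the induction, one must ensure $\Sigma$ is itself aspherical. This is the role of Gromov's warped $\mu$-bubbles: by replacing pure area minimization with a weighted functional whose weights are tuned to a parametrization of $M$ by a tube, one controls the topology of $\Sigma$ and arranges that some connected component inherits the $K(\pi,1)$-type homological input needed to feed the next induction step (this is the technical heart of Chodosh--Li in dimension four).

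The principal obstacles are twofold. First, in dimensions $n \geq 8$ the minimizer $\Sigma$ generically carries codimension-$7$ singularities that obstruct the inductive argument, and smoothing them remains an outstanding problem in geometric measure theory. Second, even below this threshold, guaranteeing that the slice remains aspherical, rather than merely supporting some nonzero homology, is delicate and is precisely where the $\mu$-bubble machinery is stretched to its current limits in dimension five. A wholly different route via the Dirac operator, coarse index theory, and the strong Novikov or Baum--Connes conjecture (the Rosenberg program) would settle the conjecture in all dimensions, but is itself conditional on major open conjectures in the $K$-theory of group $C^*$-algebras. I therefore expect the hardest step in any serious attack to be the preservation of asphericity under the slicing procedure, combined with the singularity issue in high dimensions; the techniques developed in the present paper, which bypass $\mu$-bubbles entirely via fill-radius and topological arguments, suggest that a purely $\mathcal{C}^0$-geometric reformulation of the obstruction may eventually be more tractable than the analytic one.
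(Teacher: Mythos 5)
The statement you were asked to prove is presented in the paper as a \emph{conjecture} (Gromov's aspherical conjecture), not as a theorem: the paper contains no proof of it, and no proof exists in general. It is known in dimensions $n\leq 3$ classically, and in dimensions $4$ and $5$ by Chodosh--Li and, independently, Gromov (so your description of $n=5$ as only ``partial progress'' is a slight misstatement); for $n\geq 6$ it is open. Your proposal is honest about this and is best read as a survey of the standard dimensional-descent strategy (minimizing hypersurfaces, Schoen--Yau stability plus conformal deformation, $\mu$-bubble refinements) rather than a proof. The two obstacles you flag --- preserving asphericity, or at least the needed homological essentiality, under the slicing, and the codimension-$7$ singularities of area minimizers in dimensions $\geq 8$ --- are precisely the unresolved gaps, so the proposal cannot be counted as a proof of the statement; but neither could anything else at present, and there is no paper proof to compare it against.

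What the paper actually establishes in the vicinity of this conjecture is a weaker, curvature-free statement: Proposition \ref{pr:Qessential} shows that a closed $\Q$-essential $n$-manifold ($n\geq 2$) admits no Riemannian metric whose universal cover has bounded fill radius. Its proof is entirely different in spirit from the descent strategy you outline: it combines Theorem \ref{th:Ramachandran_Wolfson} and Corollary \ref{co:Ramachandran_Wolfson} (no one-ended finitely generated subgroups, hence $\pi_1$ virtually free) with a finite-cover transfer argument in rational homology, using that $H_n(F;\Q)=0$ for a free group $F$. Via the Gromov--Lawson fill-radius bound (Theorem \ref{th:Gromov_Lawson}) this recovers the positive-scalar-curvature conclusion only in dimension $3$; in higher dimensions no bound on the fill radius of the universal cover is known to follow from $\scal>0$, which is exactly where the paper's $\mathcal{C}^0$ approach and the analytic approach you describe part ways.
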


Conjecture \ref{conj:Gromov_aspherical} was solved for $n \in \set{4,5}$ by Chodosh--Li \cite{Chodosh_Li_2024}, and independently by Gromov \cite{Gromov_2020}. Gromov \cite[Section 4, Conjecture 12]{Gromov_2017}, \cite[Section 3.2]{Gromov_2023} also conjectured a stronger version of Conjecture \ref{conj:Gromov_aspherical}, involving the notion of $\Q$-essential manifold.

\begin{definition}
	A closed $n$-manifold $M$ is $\Q$-\textit{essential} if the classifying map $f: M \rightarrow K(\pi_1(M),1)$ induces a nontrivial homomorphism $f_* : H_n(M;\Q) \rightarrow H_n(K;\Q)$ in top dimensional rational homology, that is,
	\begin{equation*}
		f_* [M] \neq 0 \in H_n(K; \Q).
	\end{equation*}
\end{definition}
Clearly, any closed aspherical manifold is $\Q$-essential.

\begin{conjecture}[\cite{Gromov_2017,Gromov_2023}] \label{conj:Gromov_Q_essential}
	A closed $\Q$-essential $n$-manifold does not admit any Riemannian metric with positive scalar curvature.
\end{conjecture}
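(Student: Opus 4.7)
This statement is an open conjecture (the paper itself only settles a weaker fill-radius version, namely Proposition \ref{pr:Qessential}), so my proposal is to reduce it to a curvature-to-fill-radius estimate and then plug into the paper's topological obstruction. Suppose, for contradiction, that a closed $\Q$-essential $n$-manifold $M$ admits a Riemannian metric of positive scalar curvature. By compactness of $M$ we have $\scal \geq s_0 > 0$ for some $s_0$, and the universal Riemannian cover $\tilde M$ automatically has bounded geometry. The plan is to deduce $\fillrad(\tilde M) < \infty$ and then apply Proposition \ref{pr:Qessential} to reach a contradiction.

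The first step is an $n$-dimensional analogue of Theorem \ref{th:Gromov_Lawson}: one wants $\fillrad(\tilde M) \leq C(n,s_0)$ under bounded geometry and $\scal \geq s_0 > 0$. In dimension three this is exactly Gromov--Lawson. In dimensions four and five, I would attempt to adapt the argument of Proposition \ref{pr:corollary_GL}, replacing the stable minimal disc spanning a long contractible lifted loop by a stable $\mu$-bubble separating the loop from infinity inside a suitable domain of $\tilde M$, in the style of Chodosh--Li and Gromov; the associated stability inequality should then contradict an excessive fill radius exactly as in the proof of Proposition \ref{pr:corollary_GL}. In the spin category, a parallel route would be a relative index / twisted Dirac argument on a tube around the lifted loop, in the spirit of Gromov--Lawson.

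The second step is then to invoke Proposition \ref{pr:Qessential}. To prove it, the idea is that finite fill radius of $\tilde M$ provides a uniform mechanism to contract every contractible loop within a tube of bounded width; iterating this filling across a skeleton of $\tilde M$ and pushing down to $M$ as in Gromov's macroscopic-dimension arguments, one factorises the classifying map $f \colon M \to K(\pi_1(M),1)$ up to homotopy through an $(n-1)$-dimensional complex, which forces $f_*[M] = 0 \in H_n(K(\pi_1(M),1);\Q)$ and contradicts $\Q$-essentialness. This parallels how Proposition \ref{pr:aspherical_oneend} extracts one-endedness of $\pi_1$ via Poincar\'e duality on a contractible universal cover, but replaces the homotopical input by a genuinely macroscopic one.

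The main obstacle is unquestionably Step 1 outside of dimension three: no uniform curvature-to-fill-radius bound is known in full generality, and this is the open analytic core of Gromov's scalar-curvature program. What the present paper shows is that the topological side (Step 2) is robust, curvature-free, and in fact more general than needed, so the whole conjecture reduces entirely to this single analytic question.
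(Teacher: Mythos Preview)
You correctly recognise that this is an open conjecture which the paper does not prove; the paper only establishes the fill-radius obstruction of Proposition~\ref{pr:Qessential}. Your two-step reduction is the natural one, and you rightly identify Step~1 (a uniform scalar-curvature-to-fill-radius bound in dimension $n$) as the missing analytic core, known only for $n=3$.

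Your sketch of Step~2, however, is not the paper's argument and carries a genuine gap. You propose a macroscopic-dimension route: use bounded fill radius of $\tilde M$ to factor the classifying map through an $(n-1)$-complex. But the fill-radius hypothesis only controls fillings of $1$-cycles in $\tilde M$, and there is no evident mechanism to ``iterate across a skeleton'' to higher dimensions without additional input; as written this step is a hope, not an argument. The paper's proof of Proposition~\ref{pr:Qessential} is entirely different and much shorter: by Corollary~\ref{co:Ramachandran_Wolfson} (which rests on Theorem~\ref{th:Ramachandran_Wolfson}, Dunwoody's accessibility for finitely presented groups, and a theorem of Serre), bounded fill radius of $\tilde M$ for a closed $M$ forces $\pi_1(M)$ to be virtually free. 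Passing to a finite cover $N\to M$ of nonzero degree $k$ with $\pi_1(N)=F$ free, one has $H_n(F;\Q)=0$ since $K(F,1)$ is homotopy equivalent to a graph; a commutative-diagram argument with the lifted classifying map then yields $k\,f_*[M]=0$, hence $f_*[M]=0$, contradicting $\Q$-essentialness. This converts the metric hypothesis directly into a group-theoretic one and bypasses any need for higher-dimensional filling control.
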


The results in Section \ref{se:number_ends} allow us to prove a weaker version of Conjecture \ref{conj:Gromov_Q_essential} involving the fill radius.

\begin{proposition} \label{pr:Qessential}
	Let $M$ be a closed $\Q$-essential $n$-manifold with $n \geq 2$. Then $M$ does not admit any Riemannian metric such that the fill radius of its universal cover $\tilde{M}$ is bounded.
\end{proposition}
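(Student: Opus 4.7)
The plan is to argue by contradiction. Suppose that $M$ carries a Riemannian metric for which $\fillrad(\tilde{M}) \leq D$ for some finite constant $D$, and derive a contradiction with the $\Q$-essentiality hypothesis.

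First, I would feed this assumption into the machinery of Section \ref{se:number_ends} to deduce a strong algebraic constraint on $\pi_1(M)$. Since $M$ is closed, $\pi_1(M)$ is finitely presented, and a uniformly bounded fill radius trivially qualifies as having at most $c$-linear growth at infinity for every $c > 0$, in particular for some $c < \tfrac{1}{3}$. Corollary \ref{co:Ramachandran_Wolfson} (or the remark immediately following it) then forces $\pi_1(M)$ to be virtually free.

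Next, I would show that for any virtually free group $G$, the classifying space satisfies $H_k(K(G,1); \Q) = 0$ for every $k \geq 2$. Let $F \leq G$ be a free subgroup of finite index. Since $\Q$ has characteristic zero, the index $[G:F]$ is invertible in $\Q$, and the standard transfer argument yields that the restriction map
\begin{equation*}
    \operatorname{res}: H^k(G; \Q) \longrightarrow H^k(F; \Q)
\end{equation*}
is injective in every degree. Because $F$ is free, the classifying space $K(F,1)$ has the homotopy type of a wedge of circles, so $H^k(F; \Q) = 0$ for $k \geq 2$. Hence $H^k(G; \Q) = 0$ for $k \geq 2$, and dually $H_k(K(G,1); \Q) = 0$ for $k \geq 2$ by the universal coefficient theorem over $\Q$.

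Finally, applying this vanishing with $G = \pi_1(M)$ and $k = n \geq 2$ contradicts the defining property of $\Q$-essentiality, which asserts that the image $f_*[M]$ of the rational fundamental class under the classifying map is a nonzero element of $H_n(K(\pi_1(M),1); \Q)$. The only nontrivial content of the argument is really packaged inside Corollary \ref{co:Ramachandran_Wolfson}, which has already been established in the previous section; the remaining algebraic step (the rational acyclicity of $K(G,1)$ above degree one for virtually free $G$) is classical and presents no obstacle. So I expect the proof to be short, with the main conceptual step being the reduction, via bounded fill radius, to the virtually free case.
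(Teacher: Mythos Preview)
Your proposal is correct and follows essentially the same approach as the paper: both argue by contradiction, invoke Corollary~\ref{co:Ramachandran_Wolfson} to conclude that $\pi_1(M)$ is virtually free, and then use a transfer argument with a free finite-index subgroup $F$ to show that $H_n(K(\pi_1(M),1);\Q)=0$, contradicting $\Q$-essentiality. The only cosmetic difference is that the paper implements the transfer geometrically via the finite cover $N\to M$ and the identity $p_*[N]=k\,[M]$ in homology, whereas you phrase it algebraically via injectivity of restriction in rational group cohomology followed by universal coefficients; these are two packagings of the same idea.
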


\begin{proof}
	Suppose that $\tilde{M}$ has bounded fill radius. Then Corollary \ref{co:Ramachandran_Wolfson} implies that the fundamental group $G$ of $M$ is virtually free. This means that there is a finite covering $p: N \rightarrow M$ of nonzero degree $k$ such that the fundamental group $F = \pi_1(N)$ is free. Consider the classifying map $f: M \rightarrow K(G,1)$. We can lift the map $f$ to obtain the following commutative diagram
	\begin{equation*}
		\begin{tikzcd}
			N \arrow{r}{\bar{f}} \arrow[swap]{d}{p} & K(F,1) \arrow{d}{} \\
			M \arrow{r}{f} & K(G,1).
		\end{tikzcd}
	\end{equation*}
	The corresponding commutative diagram induced in $n$-dimensional rational homology is
	\begin{equation*}
		\begin{tikzcd}
			H_n(N;\Q) \arrow{r}{\bar{f}_*} \arrow[swap]{d}{p_*} & H_n(F;\Q) \arrow{d}{} \\
			H_n(M;\Q) \arrow{r}{f_*} & H_n(G;\Q).
		\end{tikzcd}
	\end{equation*}
	Since $F$ is a free group, its classifying space $K(F,1)$ is homotopy equivalent to a graph, and therefore $H_n(F;\Q) = 0$. In particular $\bar{f}_* [N] = 0$. But, on the other hand, we have
	\begin{equation*}
		(f \circ p)_* [N] = k f_* [M],
	\end{equation*}
	which implies $f_* [M] = 0$. Therefore, $M$ is not $\Q$-essential.
\end{proof}

It follows from Proposition \ref{pr:Qessential} and Theorem \ref{th:Gromov_Lawson} that $\Q$-essential closed 3-manifolds do not admit Riemannian metrics with positive scalar curvature, result that was already proved in \cite{Gromov_Lawson_1983}.

\begin{remark}
	It is not true that a closed manifold whose Riemannian universal cover has bounded fill radius is not $\Z$-essential. Indeed, $\RP^n$ is $\Z$-essential, and the fill radius of its universal cover is clearly bounded (for any metric). However, $\RP^n$ is not $\Q$-essential since $H_n(\Z/2\Z;\Q) = 0$. More generally, the latter holds for all lens spaces.
\end{remark}

\section{Proof of the decomposition theorem} \label{se:prime_decomposition}

Let us present the notion of infinite connected sum modelled on a locally finite graph, following \cite{Scott_1977}. 

\begin{definition}
	Let $\mathcal{F}$ be a family of connected 3-manifolds. A 3-manifold $M$ decomposes as a \emph{connected sum} of members of $\mathcal{F}$ modelled on a locally finite graph $\mathcal{G}$ if there is a map assigning to each vertex~$v$ a copy $M_v$ of a manifold in $\mathcal{F}$ such that $M$ is diffeomorphic to the manifold constructed as follows:

	\begin{enumerate}
		\item For each vertex $v$, construct a new manifold $Y_v$ by removing from $M_v$ a number of $\deg{(v)}$ 3-balls from its interior,
		\item For each edge $e$ joining two vertices $v$ and $v'$, glue two spherical boundary components from $\partial Y_v$ and $\partial Y_{v'}$ along an orientation-reversing diffeomorphism.
	\end{enumerate}
	See Figure \ref{fi:infinite_connected_sum}.

\begin{figure}[ht]
	\centering
    \def\svgwidth{0.9\columnwidth}
\begingroup%
  \makeatletter%
  \providecommand\color[2][]{%
    \errmessage{(Inkscape) Color is used for the text in Inkscape, but the package 'color.sty' is not loaded}%
    \renewcommand\color[2][]{}%
  }%
  \providecommand\transparent[1]{%
    \errmessage{(Inkscape) Transparency is used (non-zero) for the text in Inkscape, but the package 'transparent.sty' is not loaded}%
    \renewcommand\transparent[1]{}%
  }%
  \providecommand\rotatebox[2]{#2}%
  \newcommand*\fsize{\dimexpr\f@size pt\relax}%
  \newcommand*\lineheight[1]{\fontsize{\fsize}{#1\fsize}\selectfont}%
  \ifx\svgwidth\undefined%
    \setlength{\unitlength}{691.65354331bp}%
    \ifx\svgscale\undefined%
      \relax%
    \else%
      \setlength{\unitlength}{\unitlength * \real{\svgscale}}%
    \fi%
  \else%
    \setlength{\unitlength}{\svgwidth}%
  \fi%
  \global\let\svgwidth\undefined%
  \global\let\svgscale\undefined%
  \makeatother%
  \begin{picture}(1,0.61885246)%
    \lineheight{1}%
    \setlength\tabcolsep{0pt}%
    \put(0,0){\includegraphics[width=\unitlength,page=1]{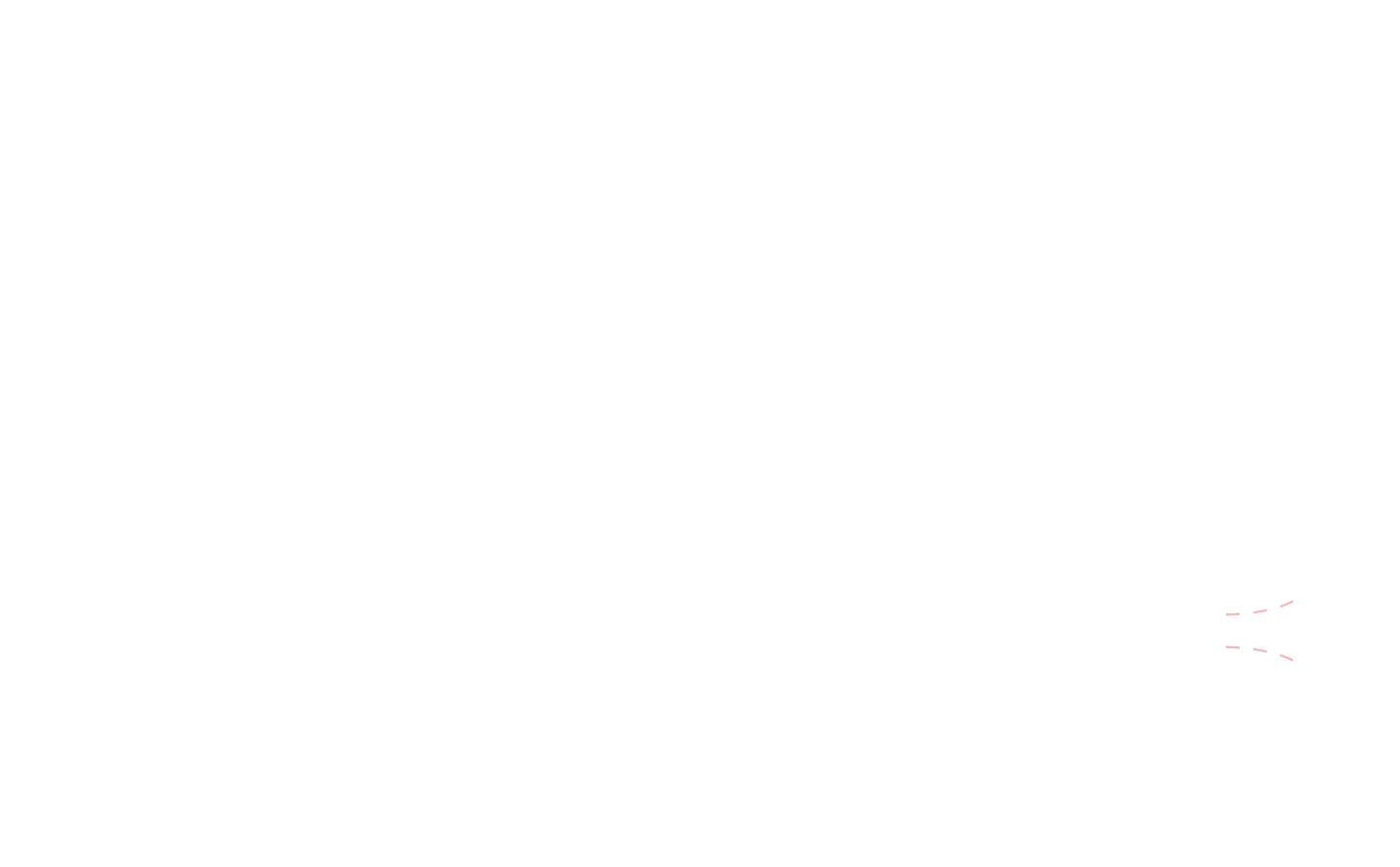}}%
    \put(0.39323947,0.37165321){\color[rgb]{0,0,0}\makebox(0,0)[lt]{\lineheight{1.25}\smash{\begin{tabular}[t]{l}$v$\end{tabular}}}}%
    \put(0.54671853,0.37264435){\color[rgb]{0,0,0}\makebox(0,0)[lt]{\lineheight{1.25}\smash{\begin{tabular}[t]{l}$v'$\end{tabular}}}}%
    \put(0.30264354,0.42298472){\color[rgb]{0,0,0}\makebox(0,0)[lt]{\lineheight{1.25}\smash{\begin{tabular}[t]{l}$M_v$\end{tabular}}}}%
    \put(0.61753864,0.42305849){\color[rgb]{0,0,0}\makebox(0,0)[lt]{\lineheight{1.25}\smash{\begin{tabular}[t]{l}$M_{v'}$\end{tabular}}}}%
    \put(0,0){\includegraphics[width=\unitlength,page=2]{infiniteconnedctedsum.pdf}}%
    \put(0.93432711,0.18305188){\color[rgb]{0,0,0}\makebox(0,0)[lt]{\lineheight{1.25}\smash{\begin{tabular}[t]{l}$\mathcal{G}$\end{tabular}}}}%
  \end{picture}%
\endgroup%

	\caption{Connected sum modelled on a graph.}
	\label{fi:infinite_connected_sum}
\end{figure}

Equivalently, a 3-manifold $M$ decomposes as a \emph{connected sum} (modelled on a locally finite graph) of members of $\mathcal{F}$ if there exists a locally finite collection of pairwise disjoint 2-spheres embedded in~$M$ such that cutting $M$ along these 2-spheres and then capping off each new spherical boundary component by a 3-ball results in a disjoint collection of manifolds belonging to $\mathcal{F}$.
Clearly, the resulting manifold depends on the graph $\mathcal{G}$ on which it is modelled. On the other hand, if a 3-manifold decomposes as an infinite connected sum, the graph $\mathcal{G}$ on which $M$ is modelled may not be unique.

If we assume $\mathcal{G}$ to be a finite tree, then we recover the usual notion of connected sum. In any case, one can turn any infinite connected sum modelled on a graph into one modelled on a tree by adding some additional $\Sp^2 \times \Sp^1$ summands. In our decomposition result, we do not consider uniqueness, so these matters will not be of importance for us.
\end{definition}

Recall the following classification theorem for orientable closed prime 3-manifolds; see \cite{Hatcher_2004}. We reproduce also its proof since it is not too long.

\begin{theorem} \label{th:classification_prime_3manifolds}
	Let $P$ be an orientable closed prime 3-manifold. Then $P$ is either spherical, aspherical or homeomorphic to $\Sp^2 \times \Sp^1$.
\end{theorem}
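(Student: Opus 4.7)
The plan is to split the analysis according to whether $P$ is irreducible or not, and then, in the irreducible case, according to whether $\pi_1(P)$ is finite or infinite.

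\textbf{Reduction to irreducible manifolds.} Recall that an orientable closed $3$-manifold is prime if and only if it is either irreducible (every embedded $2$-sphere bounds a ball) or homeomorphic to $\Sp^2 \times \Sp^1$: indeed, if an embedded $2$-sphere $S \subset P$ does not bound a ball and $P$ is prime, then $S$ must be non-separating, and cutting along $S$ and capping off exhibits $P$ as a connected sum $P' \# (\Sp^2 \times \Sp^1)$, which by primeness forces $P = \Sp^2 \times \Sp^1$. Hence we may assume from now on that $P$ is irreducible.

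\textbf{Vanishing of $\pi_2$.} I would now invoke the Sphere Theorem of Papakyriakopoulos: any nontrivial element of $\pi_2(P)$ is represented by an embedded $2$-sphere. By irreducibility, such a sphere bounds a $3$-ball, and therefore represents the trivial element in $\pi_2(P)$. This forces $\pi_2(P) = 0$, and hence also $\pi_2(\tilde{P}) = 0$ for the universal cover.

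\textbf{Finite fundamental group.} If $\pi_1(P)$ is finite, then $\tilde{P}$ is a closed, simply connected $3$-manifold. By Perelman's resolution of the Poincar\'e conjecture, $\tilde{P}$ is homeomorphic to $\Sp^3$. The Elliptization theorem (a corollary of Perelman's work on Geometrization) then asserts that the free action of $\pi_1(P)$ on $\tilde{P} \cong \Sp^3$ is conjugate to a free action by isometries of the round metric, so $P$ is a spherical manifold.

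\textbf{Infinite fundamental group.} If $\pi_1(P)$ is infinite, then $\tilde{P}$ is a non-compact simply connected $3$-manifold. I would argue that $\tilde{P}$ is contractible: the Hurewicz theorem gives $H_2(\tilde{P}) \cong \pi_2(\tilde{P}) = 0$, and since $\tilde{P}$ is an open $3$-manifold, $H_k(\tilde{P}) = 0$ for $k \geq 3$. Inductive applications of the Hurewicz theorem to the simply connected space $\tilde{P}$ yield $\pi_k(\tilde{P}) = 0$ for all $k \geq 1$, and Whitehead's theorem then implies that $\tilde{P}$ is contractible. Therefore $P$ is aspherical.

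The main obstacle is really the finite fundamental group case, which forces us to invoke the full strength of Perelman's work through the Poincar\'e conjecture and the Elliptization theorem; everything else (the structure theorem for prime $3$-manifolds, the Sphere Theorem, and the Hurewicz/Whitehead argument) is classical and essentially formal.
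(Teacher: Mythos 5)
Your proposal is correct and uses the same ingredients as the paper's proof: Perelman's elliptisation theorem for the finite $\pi_1$ case, the Hurewicz/homology-of-open-manifolds argument to show the universal cover is contractible when $\pi_1$ is infinite and $\pi_2 = 0$, and the Sphere Theorem to handle the remaining case. The only difference is cosmetic — you split first by irreducibility (equivalently $\pi_2 = 0$) and then by the size of $\pi_1$, whereas the paper splits first on $\pi_1$ and then on $\pi_2$ — but the underlying reasoning is the same.
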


\begin{proof}
	Orientable closed prime 3-manifolds $P$ are classified by their fundamental group as follows.
	
	If $\pi_1(P)$ is finite, then Perelman's resolution of 	the elliptisation conjecture \cite{Perelman_2002,Perelman_2003_a,Perelman_2003_b} implies that $P$ is a spherical manifold.
	
	Now, suppose that $\pi_1(P)$ is infinite. Then the universal cover $\tilde{P}$ of $P$ is non-compact. By the long exact homotopy sequence of a fibration \cite[Theorem 4.41]{Hatcher_2002}, we have $\pi_2(\tilde{P}) \simeq \pi_2(P)$.
	
	Suppose first that $\pi_2(P) = 0$. Then $\pi_2(\tilde{P}) = 0$ and $\tilde{P}$ is 2-homotopically connected. By Hurewicz's theorem \cite[Theorem 4.32]{Hatcher_2002}, there is an isomorphism $\pi_3(\tilde{P}) \simeq H_3(\tilde{P})$. Since $\tilde{P}$ is non-compact, we have by Poincar\'e duality that $H_3(\tilde{P};\Z)$ vanishes, so the group $\pi_3(\tilde{P})$, vanishes as well. Since the homology groups $H_k (\tilde{P};\Z)$ vanish for $k \geq 4$, we can apply Hurewicz's theorem inductively to conclude that $\pi_k(\tilde{P}) = 0$ for $k \geq 0$. Therefore, the manifold $P$ is aspherical.
	
	Finally, suppose that $\pi_2(P)$ is nontrivial. Then, by an argument based on Papakyriakopoulos' Sphere Theorem, one can prove that $P$ is homeomorphic to $\Sp^2 \times \Sp^1$; see \cite{Hempel_1976}.
\end{proof}

We finally prove Theorem \ref{th:main_general}.

\begin{proof}[Proof of Theorem \ref{th:main_general}]
By Proposition~\ref{pr:prime_decomposition}, consider an exhaustion of $M$ by compact domains $K_1 \subset K_2 \subset \dots \subset M$, whose boundaries form a locally finite collection of orientable closed connected incompressible surfaces, denoted by~$\set{\Sigma_\alpha}$. Since each surface is incompressible, $\pi_1(\Sigma_\alpha)$ is a finitely generated subgroup of $\pi_1(M)$, which cannot have exactly one end by Theorem \ref{th:Ramachandran_Wolfson}. But since $\pi_1(\Sigma_\alpha)$ is a surface group associated to a closed orientable surface, the only possibility is that $\Sigma_\alpha$ is a 2-sphere.

The result of cutting $M$ along the collection of spheres $\set{\Sigma_\alpha}$ consists of the connected components~$Y_{ij}$ of the pieces $Y_i = \overline{K_i-K_{i-1}}$. Denote by $\hat{Y}_{ij}$ the result of capping the spherical boundary components of $Y_{ij}$ off by 3-balls. 

Recall that a 3-manifold $P$ is prime if it cannot be decomposed as a non-trivial connected sum. That is, if $P \simeq P_1 \# P_2$, then either $P_1$ or $P_2$ is homeomorphic to the 3-sphere. By the Kneser--Milnor Decomposition Theorem (see \cite[Chapter 3]{Hempel_1976}, for instance), each $\hat{Y}_{ij}$ decomposes as a connected sum of prime closed 3-manifolds. Namely, there is a finite collection of disjoint spheres~$\set{\Sigma_{\beta_{ij}}}$, which can be taken inside $\intt{(Y_{ij})}$, such that $M - (\cup_\alpha \Sigma_\alpha \cup_{\beta_{ij}} \Sigma_{\beta_{ij}})$ consists of the disjoint union of prime manifolds with some punctures; see Figure \ref{fi:prime_decomposition}. Denote each of these punctured prime manifolds together with their boundary spheres by $P_{ijk}$, and the result of capping their spherical boundary components off by $\hat{P}_{ijk}$.

\begin{figure}[h]
	\centering
    \def\svgwidth{1\columnwidth}
    \import{./Figures/}{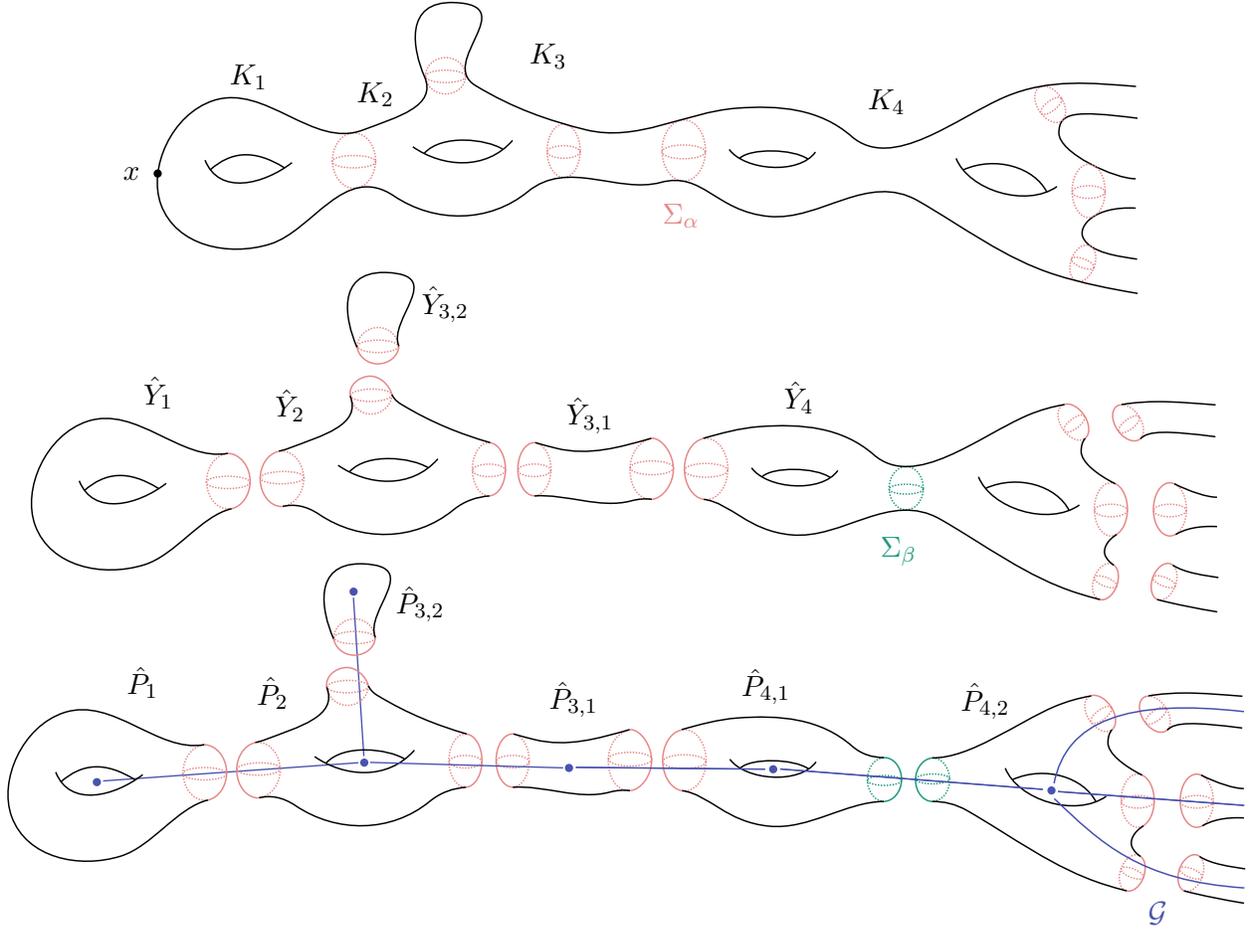}

	\caption{Prime decomposition of $M$.}
	\label{fi:prime_decomposition}
\end{figure}

Now, construct a locally finite graph $\mathcal{G}$ as follows. The graph $\mathcal{G}$ has a vertex $v$ for each prime manifold $P_{ijk}$, and there is an edge $e$ joining a pair of vertices $v, v'$ for each sphere in the common boundary $\partial P_v \cap \partial P_{v'}$, where $P_v$ and $P_{v'}$ are the prime manifolds corresponding to $v$ and $v'$. By construction, the manifold~$M$ decomposes as a (possibly infinite) connected sum of the closed prime 3-manifolds $\hat{P}_{ijk}$ modelled on $\mathcal{G}$.

By Theorem \ref{th:classification_prime_3manifolds}, each closed prime 3-manifold $\hat{P}_{ijk}$ is either spherical, aspherical or homeomorphic to $\Sp^2 \times \Sp^1$. By Corollary \ref{co:no_aspherical_summands}, the manifold $\hat{P}_{ijk}$ cannot be aspherical. Therefore, $M$ is homeomorphic to a (possibly infinite) connected sum of spherical manifolds and $\Sp^2 \times \Sp^1$ modelled on $\mathcal{G}$.
\end{proof}

\section{An indecomposable 3-manifold with quadratic decay} \label{se:example_R2xS1}

Finally, as observed in \cite[Section 3.10.2]{Gromov_2023}, the manifold $\R^2 \times \Sp^1$ admits a complete Riemannian metric of positive scalar curvature with a $C$-quadratic decay for a constant $C < 64 \pi^2$. Since it does not decompose as a connected sum of spherical manifolds and $\Sp^2 \times \Sp^1$, this shows the optimality of the decay rate in Theorem \ref{th:main}.

\begin{proposition}[{\cite[Section 3.10.2]{Gromov_2023}}]
	The manifold $\R^2 \times \Sp^1$ admits a complete Riemannian metric of positive scalar curvature with $\frac{1}{2}$-quadratic decay at infinity.
\end{proposition}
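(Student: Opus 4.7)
The plan is to construct an explicit warped product metric on $\R^2 \times \Sp^1$ of the form
\begin{equation*}
	g = dr^2 + f(r)^2 \, d\theta^2 + d\phi^2,
\end{equation*}
where $(r, \theta)$ are polar coordinates on $\R^2$ and $\phi$ is the angular coordinate on $\Sp^1$. Since the $\Sp^1$ factor is flat and unwarped, it will contribute nothing to the scalar curvature, which therefore reduces to twice the Gaussian curvature of the surface of revolution $(\R^2, dr^2 + f(r)^2 d\theta^2)$, namely
\begin{equation*}
	\scal = -\frac{2 f''(r)}{f(r)}.
\end{equation*}

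The key observation driving the construction is that the asymptotic profile $f(r) \sim \sqrt{r}$ produces $\scal \sim 1/(2r^2)$, exactly saturating the $\tfrac{1}{2}$-quadratic decay. The task therefore reduces to exhibiting a smooth function $f : [0, \infty) \to [0, \infty)$ that behaves like $r$ near $r = 0$ (so that the metric extends smoothly across the central circle $\{0\} \times \Sp^1$), like $\sqrt{r}$ at infinity, and is strictly concave on $(0, \infty)$ (so that $\scal > 0$ throughout).

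A convenient closed-form choice is $f(r) := r (1 + r^2)^{-1/4}$. A direct computation then yields
\begin{equation*}
	\scal(r) = \frac{3 + r^2/2}{(1+r^2)^2},
\end{equation*}
which is strictly positive everywhere (with $\scal(0) = 3$) and satisfies
\begin{equation*}
	\scal(r) - \frac{1}{2 r^2} = \frac{4 r^2 - 1}{2 r^2 (1+r^2)^2} > 0 \quad \text{whenever } r > \tfrac{1}{2},
\end{equation*}
confirming at most $\tfrac{1}{2}$-quadratic decay in the sense of Definition~\ref{de:decay_infinity}. Smoothness at the central circle will follow from $f(r)^2 = r^2/\sqrt{1+r^2} = r^2 - r^4/2 + O(r^6)$, which contains only even powers of $r$ and therefore extends smoothly in Cartesian coordinates across the origin of each $\R^2$ slice. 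Completeness will be immediate since $r$ measures the distance from $\{0\} \times \Sp^1$ and ranges over the whole of $[0, \infty)$.

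The main obstacle is to identify a single elementary profile meeting all three constraints simultaneously: (i) the parity condition at the origin forcing only odd Taylor coefficients in $f$, (ii) global concavity of $f$ so that $\scal > 0$ on the entire manifold, and (iii) the precise leading asymptotic $f(r) \sim \sqrt{r}$ needed to hit the prescribed constant $C = \tfrac{1}{2}$. The closed form $f(r) = r(1+r^2)^{-1/4}$ handles all three at once, reducing the rest of the argument to a routine verification.
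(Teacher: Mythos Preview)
Your proof is correct and follows essentially the same approach as the paper: both use the rotationally invariant product ansatz $g = dr^2 + f(r)^2\,d\theta^2 + dt^2$, reduce to $\scal = -2f''/f$, and seek a concave profile with $f(r)\sim r$ near the origin and $f(r)\sim\sqrt{r}$ at infinity. The only difference is the choice of $f$: the paper takes a piecewise function ($\sin r$, a concave interpolation, then $\sqrt{r}$), whereas you supply the single closed form $f(r)=r(1+r^2)^{-1/4}$, which has the mild advantage of giving the strict inequality $\scal(y) > \tfrac{1}{2} r^{-2}$ for $r>\tfrac{1}{2}$ without appealing to an unspecified interpolation.
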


\begin{proof}
Take polar coordinates $(r,\theta)$ on the first factor $\R^2$, and consider the rotationally invariant product metric
\begin{equation*}
	g = dr^2 + f(r)^2 d\theta^2 + dt^2.
\end{equation*}
By definition of the scalar curvature, the scalar curvature of $g$ is twice the Gauss curvature of $(\R^2, dr^2 + f(r)^2 d\theta^2)$. Since the metric is rotationally invariant, its scalar curvature is given by (see \cite[3.50, p.147]{GHL_2004}, for instance)
\begin{equation*}
	\scal = -2\,\frac{f''}{f}.
\end{equation*}

Now, consider the function
\begin{equation*}
	f(r) = \begin{cases}
					\sin{(r)} & \text{if } r \in [0,\frac{\pi}{4}] \\
					\varphi(r) & \text{if } r \in (\frac{\pi}{4}, 2) \\
					\sqrt{r} & \text{if } r \in [2, +\infty) 
				\end{cases}
\end{equation*}
where $\varphi$ is a concave smooth interpolation between $\sin{(r)}$ and $\sqrt{r}$. For this particular function, the scalar curvature is
\begin{equation*}
	\scal =\begin{cases}
					2 & \text{if }  r \in [0,\frac{\pi}{4}] \\
					-2\,\frac{\varphi''}{\varphi} & \text{if } r \in (\frac{\pi}{4}, 2) \\
					\frac{1}{2} \frac{1}{r^2} & \text{if } r \in [2, +\infty) 
				\end{cases}
\end{equation*}
In particular, the scalar curvature is positive with exactly quadratic decay at infinity.
\end{proof}

\begin{proposition}
	The manifold $\R^2 \times \Sp^1$ does not decompose as a connected sum of spherical manifolds and $\Sp^2 \times \Sp^1$.
\end{proposition}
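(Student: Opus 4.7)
The plan is to derive a contradiction from the vanishing of $H_2(\R^2 \times \Sp^1; \Z)$, which follows from the homotopy equivalence $\R^2 \times \Sp^1 \simeq \Sp^1$. Because a non-separating embedded 2-sphere in an orientable connected 3-manifold represents a non-zero class in $H_2$ (take any closed curve crossing it transversely in a single point), this vanishing forces every embedded 2-sphere in $\R^2 \times \Sp^1$ to be separating. The strategy is thus to show that any hypothetical decomposition of $\R^2 \times \Sp^1$ as an infinite connected sum of spherical manifolds and $\Sp^2 \times \Sp^1$ summands must produce a non-separating embedded 2-sphere in $M$.

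Assume such a decomposition exists, modelled on a locally finite graph $\mathcal{G}$. If $\mathcal{G}$ contains a cycle and $e$ is an edge of that cycle, then removing $e$ leaves $\mathcal{G}$ connected, so the corresponding decomposition 2-sphere $S_e \subset M$ has connected complement and is non-separating---already a contradiction. Hence $\mathcal{G}$ must be a tree. Exhausting $M$ by the compact submanifolds corresponding to finite subtrees and iterating the Seifert--van Kampen theorem then yields
\begin{equation*}
    \pi_1(M) \;=\; \mathop{*}_{v \in V(\mathcal{G})} \pi_1(M_v).
\end{equation*}
Since $\pi_1(M) = \Z$ is finitely generated, only finitely many factors can be non-trivial; since $\Z$ is torsion-free, the finite fundamental groups of the spherical summands must all be trivial, so these summands are all copies of $\Sp^3$; and since $\Z$ is freely indecomposable as a non-trivial free product, exactly one remaining factor must be non-trivial and isomorphic to $\Z$, producing a unique $\Sp^2 \times \Sp^1$ summand at some vertex $v_0$.

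It remains to exhibit a non-separating 2-sphere in $M$ in this configuration. Inside the original $\Sp^2 \times \Sp^1$ summand, the $\deg(v_0) < \infty$ removed open balls can be gathered by an ambient isotopy into a small ball, so some cross-sectional sphere $S = \Sp^2 \times \{t_0\}$ will then avoid them entirely and lie inside $Y_{v_0} \subset M$. Cutting $\Sp^2 \times \Sp^1$ along $S$ yields the connected cylinder $\Sp^2 \times [0,1]$, which remains connected after removing finitely many disjoint open balls; moreover, gluing back the connected submanifolds of $M$ corresponding to the subtrees of $\mathcal{G}$ rooted at the boundary spheres of $Y_{v_0}$ preserves connectedness. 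Therefore $M \setminus S$ is connected, $S$ is non-separating in $M$, and the required contradiction with $H_2(M; \Z) = 0$ follows. The main delicate point is justifying the free-product formula for the fundamental group of an infinite connected sum modelled on a tree, which however reduces via compact exhaustion along finite subtrees and a direct-limit argument to the standard finite case.
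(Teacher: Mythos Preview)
Your proof is correct and takes a genuinely different route from the paper's, though the two share the same middle step.

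Both arguments use Van Kampen's theorem along the tree to write $\pi_1(M)$ as a free product of the $\pi_1(M_v)$, and then exploit that $\Z$ is torsion-free and freely indecomposable to conclude there is exactly one $\Sp^2 \times \Sp^1$ summand, all other summands being $\Sp^3$. The differences lie at the two ends of the argument. To reduce to a tree, the paper simply \emph{adds} extra $\Sp^2 \times \Sp^1$ summands to kill the cycles of $\mathcal{G}$, whereas you observe directly that a cycle would force a non-separating decomposition sphere, impossible since $H_2(\R^2\times\Sp^1)=0$. For the final contradiction, the paper analyses the end structure of $\mathcal{G}$, reduces it to a half-line, identifies the resulting manifold as $(\Sp^2\times\Sp^1)\#\R^3 \simeq \Sp^2 \vee \Sp^1$, and contradicts $\pi_2(\R^2\times\Sp^1)=0$. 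You instead stay inside the single $\Sp^2\times\Sp^1$ summand, exhibit a cross-sectional sphere disjoint from the finitely many removed balls, and check it remains non-separating in $M$, contradicting $H_2(M)=0$ directly.

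Your approach is arguably more elementary: it avoids the end analysis and the identification of an infinite connected sum of $3$-spheres along a ray as $\R^3$, and the invariant $H_2$ does all the work uniformly (both for ruling out cycles in $\mathcal{G}$ and for the final contradiction). The paper's approach has the minor advantage of pinning down the homotopy type of the putative decomposition precisely before deriving the contradiction. Your caveat about the free-product formula in the infinite case is well placed; the direct-limit argument over finite subtrees you sketch is exactly what is needed.
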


\begin{proof}
	Suppose that $\R^2 \times \Sp^1$ is homeomorphic to a connected sum $M$ modelled on a locally finite graph $\mathcal{G}$ of closed prime manifolds $P_i$, each of them homeomorphic to some spherical manifold or to $\Sp^2 \times \Sp^1$. Then, Van Kampen's theorem implies
	\begin{equation*}
		\pi_1(M) \simeq *_i \, \pi_1(P_i) * \pi_1(\mathcal{G}).
	\end{equation*}
	We can always add $\Sp^2 \times \Sp^1$ summands to the connected sum to turn $\mathcal{G}$ into a locally finite tree. Hence, $\pi_1(M) \simeq *_i\pi_1(P_i)$. However, $\pi_1(\R^2 \times \Sp^1) \simeq \Z$ is torsion free, so, after permuting the summands in the connected sum, one can assume that $P_1 \simeq \Sp^2 \times \Sp^1$ and $P_i \simeq \Sp^3$ for $i \geq 2$. Notice that, since $\R^2 \times \Sp^1$ has one end, the tree $\mathcal{G}$ must have one end as well. So, after removing some $\Sp^3$ terms, the tree $\mathcal{G}$ can be assumed to be homeomorphic to a half-line. Since an infinite connected sum of 3-spheres modelled on a half-line graph is homeomorphic to $\R^3$, the manifold $M$ is homeomorphic to $(\Sp^2 \times \Sp^1) \# \R^3$, which is homotopically equivalent to the wedge sum $\Sp^2 \vee \Sp^1$. We obtain a contradiction with $\pi_2(\R^2 \times \Sp^1) = 0$.
\end{proof}

\section{The surgery theorem for infinite connected sums} \label{se:surgery}

In this section, we prove that a complete Riemannian manifold of positive scalar curvature with at most $C$-quadratic decay at infinity for some $C > 64 \pi^2$ admits a complete Riemannian metric with uniformly positive scalar curvature; see Corollary \ref{co:surgery_uniform_scal}.

\medskip

By Theorem \ref{th:main}, Corollary \ref{co:surgery_uniform_scal} follows from the following result.

\begin{theorem} \label{th:surgery}
	Let $M$ be an orientable complete Riemannian 3-manifold which decomposes as a possibly infinite connected sum of spherical manifolds and $\Sp^2 \times \Sp^1$. Then $M$ admits a complete Riemannian metric of uniformly positive scalar curvature.
\end{theorem}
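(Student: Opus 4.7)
The plan is to construct the desired metric directly from the given infinite connected sum decomposition of $M$, applying the Gromov--Lawson surgery technique \cite{Gromov_Lawson_1980b} uniformly over the graph.

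Write $M = \#_{v \in V(\mathcal{G})} M_v$ for some locally finite graph $\mathcal{G}$, where each $M_v$ is either a spherical manifold $\Sp^3/\Gamma_v$ or a copy of $\Sp^2 \times \Sp^1$. First I would endow each summand with a reference Riemannian metric of constant scalar curvature $s_0 > 0$, tuned to take the same value on every summand: a suitably scaled round metric on each $\Sp^3/\Gamma_v$, and a suitably scaled product metric on each $\Sp^2 \times \Sp^1$. This gives a common lower bound $s_0$ on the scalar curvature across all summand types, independent of the index $|\Gamma_v|$.

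Next I would recall the quantitative form of the Gromov--Lawson surgery construction: given a Riemannian $3$-manifold $(N, g)$ with $\scal \geq s_0 > 0$ and a small geodesic ball $B \subset N$, one can replace $B$ by a \emph{torpedo} cap (a short half-cylinder $\Sp^2 \times [0, L]$ rounded off at one end) so that the resulting metric has scalar curvature at least $s_0' = s_0'(s_0) > 0$, where $s_0'$ depends only on $s_0$. Two such torpedoes may be glued along their common $\Sp^2$ boundary to realise the Gromov--Lawson connected sum, still with scalar curvature at least $s_0'$. The construction is local (so disjoint surgeries performed in the same manifold do not interact) and may be applied with arbitrarily small ball radii without affecting the bound $s_0'$, by rescaling the template accordingly.

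I would then iterate this surgery over the graph $\mathcal{G}$: at each vertex $v$, excise $\deg(v)$ pairwise disjoint geodesic balls from $M_v$---possible for any finite $\deg(v)$, since $M_v$ has positive volume and the balls can be chosen arbitrarily small---and along each edge of $\mathcal{G}$, glue the two incident torpedoes to form a neck between the adjacent summands. The resulting Riemannian metric $\bar g$ on $M$ satisfies $\scal(\bar g) \geq s_0' > 0$ everywhere, and is complete because each summand has bounded diameter and each neck has positive length, so any divergent sequence in $M$ traverses infinitely many necks and has infinite length. The main obstacle is extracting the required quantitative uniformity from the Gromov--Lawson construction: the template, the resulting bound $s_0'$, and the admissible ball radii must all be chosen once and for all, depending only on $s_0$ and not on the specific vertex or on $\deg(v)$. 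This uniformity is essentially present in the proof of \cite[Theorem A]{Gromov_Lawson_1980b}, but must be carefully extracted in order for the infinite iteration to yield a genuinely uniform lower bound on $\scal(\bar g)$.
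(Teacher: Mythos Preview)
Your proposal is correct and follows essentially the same strategy as the paper: equip each summand with a metric of scalar curvature at least a fixed $s_0>0$, invoke the quantitative Gromov--Lawson surgery construction (packaged in the paper as Proposition~\ref{pr:surgery_metric}, yielding $\scal \geq \alpha s_0$ for any fixed $\alpha\in(0,1)$) to attach cylindrical necks at disjoint small balls, and glue along the edges of $\mathcal{G}$. Your explicit attention to the uniformity of the surgery template and to completeness of the resulting metric is a welcome addition, but the overall architecture matches the paper's proof.
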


The proof of Theorem \ref{th:surgery} is based on the following local construction, which provides a control on the lower bound of the scalar curvature. This construction is key to the proof of Gromov--Lawson's Surgery Theorem \cite[Theorem A]{Gromov_Lawson_1980b}, which states that the connected sum of two closed 3-manifolds of positive scalar curvature admits a metric of positive scalar curvature.

\begin{proposition}[{\cite{Gromov_Lawson_1980b}}] \label{pr:surgery_metric}
	Let $(M,g)$ be a complete Riemannian 3-manifold with uniformly positive scalar curvature $\scal \geq s_0 > 0$. Fix $x \in M$. Let $R \leq \min\set{\frac{1}{2}\inj_M(x),1}$, where $\inj_M(x)$ denotes the injectivity radius of $M$ at $x$. Let $\alpha \in (0,1)$ be a constant. Then there exist a radius $r \in (0,R)$ and a Riemannian metric $g'$ on the punctured ball $B(x,R)-\set{x}$ satisfying the following properties:
	\begin{enumerate}
		\item the new Riemannian metric $g'$ coincides with $g$ on $\partial B(x,R)$;
		\item the punctured ball $(B(x,r) - \set{x},g')$ is isometric to the standard Riemannian cylinder $\Sp^2(r) \times \R$;
		\item the scalar curvature of $g'$ satisfies $\scal_{g'} \geq \alpha s_0 > 0$.
	\end{enumerate}
\end{proposition}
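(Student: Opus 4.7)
I would realize the new metric $g'$ as the metric induced on a hypersurface of revolution $\Sigma$ inside the Riemannian product $(M \times \R, g + d\sigma^2)$, following the classical Gromov--Lawson construction. First set up normal polar coordinates $(t, \theta) \in (0, R] \times \Sp^2$ on $B(x,R) - \{x\}$, in which
\begin{equation*}
g = dt^2 + t^2 \bar{g}_t,
\end{equation*}
with $\bar{g}_0$ the round unit metric on $\Sp^2$ and $\bar{g}_t = \bar{g}_0 + O(t^2)$. In the $(t,\sigma)$-half-plane I would design a smooth embedded planar curve $\gamma(s) = (t(s), \sigma(s))$, parameterized by arc length, satisfying: (i) $\gamma(s) = (R - s, 0)$ for $s$ in a right neighborhood of $0$ (horizontal segment meeting $\partial B(x,R)$ normally); (ii) the tangent of $\gamma$ turns monotonically by a total angle of $\pi/2$; (iii) $\gamma(s) = (r, s - s_1)$ for $s \geq s_1$ and some $r \in (0, R)$ (a vertical ray at $t = r$). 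Rotating $\gamma$ along the $\Sp^2$-fibers produces a smooth hypersurface $\Sigma = \{(t(s),\theta,\sigma(s)) : s \geq 0, \theta \in \Sp^2\} \subset B(x,R) \times \R$, diffeomorphic to $\Sp^2 \times [0, \infty)$, and hence to $B(x,R) - \{x\}$ via a reparameterization of the $s$-variable.

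The heart of the argument is a pointwise scalar-curvature estimate on $\Sigma$. If $\nu$ is a unit normal, $A$ the second fundamental form, and $H = \tr A$, the Gauss equation gives
\begin{equation*}
\scal_\Sigma = \scal_{M \times \R} - 2 \Ric_{M \times \R}(\nu, \nu) + H^2 - |A|^2.
\end{equation*}
Since $g + d\sigma^2$ is a product metric, $\scal_{M \times \R} = \scal_M \geq s_0$, while $\Ric_{M \times \R}(\nu, \nu)$, $H^2$ and $|A|^2$ can be expanded explicitly in terms of the geodesic curvature $\kappa$ of $\gamma$, the angle $\vartheta(s) = \angle(\gamma'(s), \partial_t)$, the ambient curvature of $(M, g)$ in $B(x, R)$, and the factor $1/t(s)$ produced by the contracting geodesic spheres. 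As in \cite{Gromov_Lawson_1980b}, by pushing the bulk of the turning of $\gamma$ into the region where $t(s)$ is small, the positive contributions dominate the only possibly negative term $-|A|^2$ by a margin as close to $1$ as desired; given $\alpha \in (0, 1)$, one thereby secures $\scal_\Sigma \geq \alpha s_0$ uniformly along $\Sigma$.

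Finally, on the vertical segment $\{t \equiv r\}$, the induced metric is $r^2 \bar{g}_r + d\sigma^2$ rather than the exact round cylinder $r^2 \bar{g}_0 + d\sigma^2$. This is corrected by an additional interpolation further down the cylindrical end: I would convex-combine $\bar{g}_r$ with $\bar{g}_0$ using a cutoff in the $\sigma$-direction. Since $\bar{g}_r - \bar{g}_0 = O(r^2)$ in $C^\infty$, the scalar curvature loss produced by this interpolation is $O(r^2)$, and by choosing $r$ small enough (which only forces $s_1$ to be taken slightly larger), the bound $\scal_{g'} \geq \alpha s_0$ is preserved. Transporting the resulting metric back through the diffeomorphism $\Sigma \cong B(x,R) - \{x\}$ yields $g'$ with properties (1)--(3).

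The main obstacle is the quantitative scalar-curvature estimate in the bending region: one has to fit the full $\pi/2$ of turning into a range of $s$ while keeping $|A|$ small enough that $-|A|^2$ does not overwhelm $\scal_M \geq s_0$ plus the positive contribution from the shrinking spheres. The leeway is provided entirely by the freedom to schedule how $t(s)$ decreases, and the construction succeeds precisely because the positive term scales like $1/t^2$ while the extrinsic term can be kept of order $(\dot\vartheta)^2$ with total variation $\pi/2$.
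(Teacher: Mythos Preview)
The paper does not give its own proof of this proposition; it simply attributes the result to \cite{Gromov_Lawson_1980b} and uses it as a black box in the proof of Theorem~\ref{th:surgery}. Your sketch is a faithful outline of the classical Gromov--Lawson bending construction from that reference (hypersurface in $M\times\R$ over a curve in the $(t,\sigma)$-half-plane, Gauss-equation estimate, final interpolation to the round cylinder), so your approach is exactly what the paper defers to.
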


Let us prove Theorem \ref{th:surgery}.

\begin{proof}[Proof of Theorem \ref{th:surgery}]
Let $\mathcal{G}$ be a locally finite graph such that $M$ decomposes as a connected sum of spherical manifolds and $\Sp^2 \times \Sp^1$ modelled on $\mathcal{G}$. For each vertex $v$ of $\mathcal{G}$, endow the corresponding manifold $M_v$ with a metric of $\scal \geq s_0 > 0$. For every edge $e$ joining two vertices $v^+$ and $v^-$ of $\mathcal{G}$, let $(x_e^+,x_e^-)$ be a pair of points such that $x_e^\pm \in M_{v^\pm}$. For every edge $e$ of $\mathcal{G}$, let $R_e > 0$ be a radius satisfying the following two conditions:
	\begin{enumerate}
		\item $R_e \leq \min\set{\frac{1}{2}\inj_{M_{v^+}}(x^+_e), \frac{1}{2}\inj_{M_{v^-}}(x^-_e), 1}$;
		\item the balls $B(x^\pm_e,R_e)$ are pairwise disjoint.
	\end{enumerate}
	Now, fix $\alpha \in (0,1)$. For each edge $e$ of $\mathcal{G}$, let $r_e \in (0,R_e)$ and $(g^\pm_e)'$ be the radius and the Riemannian metric on $B(x_e^\pm,R_e)$ given by Proposition \ref{pr:surgery_metric}. For each edge $e$, glue $(B(x_e^+,R_e)-B(x_e^+,r_e),(g^+_e)' )$ with~$(B(x_e^-,R_e)-B(x_e^-,r_e),(g^-_e)' )$ by identifying their inner boundaries, both of which are isometric to the round sphere $\Sp^2(r_e)$ of radius $r_e$. The resulting Riemannian manifold $(M',g')$ is homeomorphic to $M$ by construction, and by Proposition \ref{pr:surgery_metric}, the Riemannian metric $g'$ has uniformly positive scalar curvature $\scal_{g'} \geq \alpha s_0 > 0$.
\end{proof}

\begin{remark}
	The same strategy was used in \cite{Bessieres_Besson_Maillot_Marques_2021}, where the authors proved that an orientable 3-manifold admits a complete Riemannian metric of positive scalar curvature and bounded geometry if and only if the manifold decomposes as a possibly infinite connected sum of spherical 3-manifolds and $\Sp^2 \times \Sp^1$ with finitely many summands up to homeomorphism.
\end{remark}

\section{Statements and Declarations}

The authors have no competing interests to declare that are relevant to the content of this article. Data sharing not applicable to this article as no datasets were generated or analysed during the current study.

\pagestyle{plain}
\bibliographystyle{alpha}
\bibliography{REF_Decomposition3Manifolds}

\end{document}